\documentclass[reqno,tbtags,a4paper,12pt]{amsart}

\usepackage[in]{fullpage}
\usepackage[matrix,arrow,curve]{xy}
\usepackage{tikz,tikz-cd}
\usetikzlibrary{decorations.pathreplacing}
\usetikzlibrary{decorations.markings}

\usepackage{amsmath,amssymb,amsthm,amsfonts,amscd}
\usepackage{graphics,graphicx}

\usepackage{mathrsfs}
\renewcommand{\mathscr}{\mathcal}

\newcommand{\Ind}{\mathop{\mathrm{Ind}}\nolimits}

\newcommand{\Mod}{\mathop{\mathrm{Mod}}\nolimits}

\newcommand{\Sp}{\mathrm{Sp}}
\newcommand{\SO}{\mathrm{SO}}
\newcommand{\SL}{\mathrm{SL}}
\newcommand{\GL}{\mathrm{GL}}
\newcommand{\bSp}{\mathbf{Sp}}
\newcommand{\bSL}{\mathbf{SL}}
\newcommand{\bGL}{\mathbf{GL}}

\newcommand{\Aut}{\mathop{\mathrm{Aut}}\nolimits}
\newcommand{\Out}{\mathop{\mathrm{Out}}\nolimits}

\newcommand{\ab}{\mathrm{ab}}
\newcommand{\alg}{\mathrm{alg}}
\newcommand{\Hom}{\mathop{\mathrm{Hom}}\nolimits}
\newcommand{\Tor}{\mathop{\mathrm{Tor}}\nolimits}
\newcommand{\Diff}{\mathop{\mathrm{Diff}}\nolimits}
\newcommand{\Ann}{\mathop{\mathrm{Ann}}\nolimits}

\newcommand{\I}{\mathcal{I}}
\newcommand{\K}{\mathcal{K}}

\newcommand{\tors}{\mathop{\mathrm{tors}}\nolimits}
\newcommand{\Tors}{\mathop{\mathrm{tors}}\nolimits}

\newcommand{\Z}{\mathbb{Z}}

\newcommand{\Q}{\mathbb{Q}}

\newcommand{\bV}{\mathbf{V}}

\newcommand{\CC}{\mathcal{C}}

\newcommand{\CF}{\mathcal{F}}
\newcommand{\CG}{\mathcal{G}}

\newcommand{\CP}{\mathcal{P}}

\newcommand{\CT}{\mathcal{T}}
\newcommand{\CU}{\mathcal{U}}
\newcommand{\CV}{\mathcal{V}}

\newcommand{\bG}{\mathbf{G}}

\newcommand{\fX}{\mathfrak{X}}
\newcommand{\fS}{\mathfrak{S}}
\newcommand{\fg}{\mathfrak{g}}
\newcommand{\fsp}{\mathfrak{sp}}

\newtheorem{thm}{Theorem}

\newtheorem{theorem}{Theorem}[section]
\newtheorem{propos}[theorem] {Proposition}
\newtheorem{cor}[theorem] {Corollary}
\newtheorem{lem}[theorem]{Lemma}

\theoremstyle{definition}

\newtheorem{defin}[theorem]{Definition}
\newtheorem{remark}[theorem]{Remark}

\numberwithin{equation}{section}

\author{Alexander A. Gaifullin}

\thanks{This work was supported by the Russian Science Foundation under grant no.~26-11-00052,\\ \texttt{https:/\!/rscf.ru/en/project/26-11-00052/}.}

\title{Finite generation, algebraicity, and representation stability for  homology of Torelli groups}
\date{}

\address{Steklov Mathematical Institute of Russian Academy of Sciences, Moscow, Russia}
\address{Lomonosov Moscow State University, Russia}

\subjclass{20F34 (Primary); 11F06; 11F75; 20H05; 57S05 (Secondary)}
\keywords{Mapping class group, Torelli group, representation stability, arithmetic group, boundedly generated group, Miller--Morita--Mumford classes, complex of homologous curves}

\begin{document}

\begin{abstract}
 We solve a long-standing problem of whether the homology groups of the Torelli subgroups $\I_g\le\Mod_g$ are finitely generated in stable range. Namely, we prove that the group $H_k( \I_g;\Z)$ is finitely generated, provided that $k\le g-2$. Two main ingredients of our approach are as follows. First, we show that the action of any symplectic transvection~$t_x\in\Sp_{2g}(\Z)$ on the homology of~$\I_g$ satisfies the following unipotency condition: $(t_x-1)^{k+1}H_k( \I_g;\Z)=0$. The proof of this fact relies on the study of the spectral sequence for the action of~$\I_g$ on the complex of homologous curves on~$\Sigma_g$, which is known to be $(g-3)$-acyclic by a recent result of Minahan. The second key ingredient is Tavgen's theorem  asserting that the group $\Sp_{2g}(\Z)$ is boundedly elementarily generated. For homology with coefficients in~$\Q$, we further prove that $H_k(\I_g;\Q)$ is an algebraic $\Sp_{2g}(\Z)$-representation in the same stable range $k\le g-2$.
 Kupers and Randal-Williams have obtained a conditional result: they computed the algebraic part of the rational cohomology of Torelli groups in stable range under the assumpition that the rational cohomology groups are finite-dimensional in this stable range. Our results turn this conditional computation into a precise theorem that describes the whole rational cohomology ring of Torelli groups in stable range. As further applications, we obtain the following two consequences. First, we prove Morita's conjecture asserting that the $\Sp_{2g}(\Z)$-invariant part of the rational cohomology of~$\I_g$ stabilizes to the polynomial ring $\Q[e_2,e_4,\ldots]$ in the even Miller--Morita--Mumford classes. Second, we prove the uniform representation stability for the series of groups~$\left\{ H_k\bigl(\I_g^1;\Q)\right\}_{g=1}^{\infty}$.
\end{abstract}

\maketitle

\section{Introduction}

\subsection{Finite generation of homology}
In the study of the mapping class group~$\Mod_g$ of the genus~$g$ oriented surface~$\Sigma_g$, a fundamental role is played by the \textit{Torelli subgroup} $\I_g\le\Mod_g$ consisting of those mapping classes that act trivially on~$H_1(\Sigma_g;\Z)$. It is an old well-known problem to find out which homology groups $H_k(\I_g;\Z)$ are finitely generated and which are not, see, for instance~\cite[Problem~2.9]{Kir97}, \cite[Problem~5.11]{Far06}. A particularly important  question is whether these groups are finitely generated in a range of degrees tending to infinity with~$g$. Hain~\cite[Sect.~4]{Hai06} mentioned a folk conjecture that $H_k(\I_g;\Z)$ is finitely generated when $k\le g-2$. Our first result is exactly the proof of this conjecture.

Along with the closed surface~$\Sigma_g$, we will consider a surface with one puncture~$\Sigma_{g,1}$ and a surface with one boundary component~$\Sigma_g^1$. To treat these three cases together, we write~$\Sigma_{g,p}^b$ for a genus~$g$ oriented surface with $p$ punctures and $b$ boundary components, and agree that we omit an index if it is zero. However, throughout the  paper we always impose the condition $b+p\le 1$. We denote by~$\Mod_{g,p}^b$ and~$\I_{g,p}^b$ the mapping class group and Torelli group for~$\Sigma_{g,p}^b$, respectively. (Note that $\Mod_g^1$ is the mapping class group for diffeomorphisms of~$\Sigma_g^1$ that fix every  point of the boundary.) For definitions and results concerning mapping class groups, see~\cite{FaMa12}.

\begin{thm}\label{thm_main}
Suppose that $k\le g-2$ and $b+p\le 1$. Then the abelian group~$H_k\bigl(\I_{g,p}^b;\Z\bigr)$ is finitely generated.
\end{thm}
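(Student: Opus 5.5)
The plan follows the two ingredients announced in the abstract: a unipotency statement for transvections, and bounded elementary generation of $\Sp_{2g}(\Z)$. We argue by induction on $k$; the case $k=0$ is trivial.

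\emph{Reduction.} Consider the extension $1\to\I^b_{g,p}\to\Mod^b_{g,p}\to\Sp_{2g}(\Z)\to 1$ and write $M_k=H_k(\I^b_{g,p};\Z)$, a $\Z[\Sp_{2g}(\Z)]$-module. The Lyndon--Hochschild--Serre spectral sequence does not directly give finite generation of the fibre homology. So we work with the module structure instead. First we show that $M_k$ is finitely generated as a $\Z[\Sp_{2g}(\Z)]$-module. Then we show that the action factors through a ``polynomial'' quotient of the group ring that is finitely generated as an abelian group.

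\emph{Key step 1 (unipotency).} We prove that $(t_x-1)^{k+1}$ annihilates $M_k$ for every symplectic transvection $t_x$, with $x$ primitive. Realise $t_x$ by a Dehn twist $T_\gamma$ about a nonseparating curve $\gamma$ with $[\gamma]=x$. We then use the complex of homologous curves $\mathcal{C}_x$, whose vertices are curves (or multicurves) homologous to $x$. By Minahan it is $(g-3)$-acyclic, hence $k+1$-acyclic when $k\le g-2$ up to the edge degree. The equivariant spectral sequence for $\I$ acting on $\mathcal{C}_x$ then converges to $M_k$ in degrees $\le k$. Its $E^1$-terms are homologies of stabilisers, which are Torelli-type groups of the cut surfaces. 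On each stabiliser, $T_\gamma$ acts as conjugation by an element commuting with the stabiliser, so $t_x-1$ acts trivially on $E^1_{p,q}$, where it is induced by a mapping class preserving the cell. Hence $t_x-1$ kills each graded piece of the filtration of $M_k$. That filtration has length at most $k+1$, which gives $(t_x-1)^{k+1}M_k=0$. We expect the main obstacle to be making the action of $t_x$ on the spectral sequence precise. The difficulty is that $T_\gamma$ fixes only vertices disjoint from $\gamma$, so we must use the fact that $t_x$ is central enough modulo $\I$ on the relevant stabilisers. One could try choosing, for each cell, a lift of $t_x$ fixing it; these lifts differ by Torelli elements, which act trivially on $M_k$.

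\emph{Key step 2 (finite generation as a module).} We show that $M_k$ is generated over $\Z[\Sp_{2g}(\Z)]$ by finitely many classes. The idea is to use the same spectral sequence together with induction on $k$ and $g$. The $E^1$-terms are induced up from stabiliser homology in degrees $\le k$ of surfaces of smaller genus or with boundary. These have stable range $k\le (g-1)-2$ after cutting, possibly shifted. Only finitely many orbits of cells appear, since $\Mod$ acts on $\mathcal{C}_x$ with finitely many orbits in each dimension.

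\emph{Key step 3 (bounded generation).} By Tavgen's theorem, every $h\in\Sp_{2g}(\Z)$ can be written as $h=u_1^{n_1}\cdots u_N^{n_N}$, where $u_1,\dots,u_N$ is a fixed list of elementary root unipotents (transvection-type), $N$ is fixed, and $n_i\in\Z$. Each $u_i$ satisfies $(u_i-1)^{m}=0$ on $M_k$ for some $m$, using step 1 for long roots and products of transvections for short roots. Hence
\[
u_i^{n}=\sum_{j<m}\binom{n}{j}(u_i-1)^j .
\]
It follows that for any generator $v$, the orbit $\Sp_{2g}(\Z)v$ lies in the abelian subgroup generated by the finitely many elements $(u_1-1)^{j_1}\cdots(u_N-1)^{j_N}v$ with $j_i<m$. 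Applying this to the finitely many module generators from step 2 shows that $M_k$ is a finitely generated abelian group.
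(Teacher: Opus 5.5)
\textbf{Steps 1 and 3 are correct. Step 2 is a genuine gap.}

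Step 1 is the paper's unipotency argument on the complex of homologous curves (Theorem~\ref{thm_annih}). Your remedy works: for each orbit representative $\sigma$, use the twist $T_c$ about a curve $c\in\sigma$. This is legitimate because the induced action on $E^1$ does not depend on the chosen lift; the paper instead arranges that all representatives contain one fixed curve. Note that at $k=g-2$ you only get a surjection $H_k^{\I}(\CC_x(\Sigma_g);\Z)\to M_k$, which suffices. Step 3 is the paper's $k=2$ argument via Tavgen's bounded generation.

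The gap is Step 2, finite generation of $M_k$ as a $\Z[\Sp_{2g}(\Z)]$-module. This does not follow routinely from the spectral sequence. For $k=2$ it is a separate substantial theorem of Kassabov and Putman, and the paper points out that no such result is available for $k>2$. Your sketch fails at several points:
\begin{itemize}
\item $\Mod$ does not act on $\CC_x(\Sigma_g)$; only the preimage of the stabilizer of $x$ does.
\item The $E^1$ page is a direct sum over $\I$-orbits of cells, and for $p\ge1$ there are infinitely many of them. For instance, the $\I$-orbit of an edge $\langle c_0,c_1\rangle$ remembers the subgroup of $H$ carried by each complementary piece.
\item The stabilizers $\I_\sigma$ are not Torelli groups of surfaces with $b+p\le1$. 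They are Torelli-type groups of cut surfaces with several boundary components, extended by twists, so your induction hypothesis does not apply to them.
\item Even formally, $E^1_{0,k}$ involves $H_k$ of a genus-$(g-1)$ group, which lies outside the range $k\le (g-1)-2$ when $k=g-2$.
\item $\Z[\Sp_{2g}(\Z)]$ and the stabilizer group rings are not Noetherian. Passing from $E^1$ to $E^\infty$ takes kernels of differentials, and this does not preserve finite generation.
\end{itemize}

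The missing idea is to replace module finite generation by a much weaker condition: $(M_k\otimes A)_\Lambda$ is finitely generated for every finite-index $\Lambda\le\Sp_{2g}(\Z)$ and every $\Lambda$-module $A$ that is finitely generated over $\Z$. This condition does follow from the Lyndon--Hochschild--Serre spectral sequence of $1\to\I\to\CG\to\Lambda\to1$, where $\CG$ is the preimage of $\Lambda$ in $\Mod$.
\begin{itemize}
\item Induction on $k$ is needed only in degrees $<k$.
\item The VFL theorems of Raghunathan (for arithmetic groups) and Ivanov (for $\Mod$) make the rows $q<k$ and $H_k(\CG;A)$ finitely generated. Hence $H_k(\I;A)_\Lambda=E^2_{0,k}$ is finitely generated.
\item The universal coefficient sequence then passes this to $(M_k\otimes A)_\Lambda$.
\end{itemize}

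One then needs an algebraic theorem (Theorem~\ref{thm_alg}): a transvection-unipotent module with this coinvariant property is a finitely generated abelian group. Its proof runs as follows.
\begin{itemize}
\item It uses the universal unipotent module $\CU_n$, the group ring modulo the ideal generated by all $(t_x-1)^n$. This module is finitely generated over $\Z$ by exactly your Step~3 argument.
\item Complete reducibility of finite-dimensional rational representations of $\Sp_{2g}(\Z)$ (Bass--Milnor--Serre) yields a quasiprojection $q_n\in\Z[\Gamma]$ with $\varepsilon(q_n)\ne0$.
\item This embeds $cM$, for a suitable positive integer $c$, into a quotient of the coinvariants $\bigl(M\otimes\Hom_{\Z}(\CU_n',\Z)\bigr)_\Gamma$, where $\CU_n'$ is the torsion-free quotient of $\CU_n$.
\item $M/cM$ is finite because a finite-index subgroup acts trivially on it.
\end{itemize}

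Without a substitute for Step 2 along these lines, your argument proves the theorem only for $k\le 2$, where module finite generation is already known.
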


Previously, the finite generation of homology was known only for $k=1$ and $g\ge 3$ (Johnson~\cite{Joh83}, \cite{Joh85b}) and for the second rational homology groups~$H_2(\I_{g,p}^b;\Q)$ when $g\ge 5$ (Minahan~\cite{Min23a} for $g\ge51$, and Minahan and Putman~\cite{MiPu25} for all $g\ge 5$). Note that even for $k=2$ our result is stronger than that of Minahan and Putman. Firstly, we prove finite generation of the second homology group starting from $g=4$ instead of $g=5$. Secondly, we do this with integer coefficients rather than rational ones.

Note that the homology groups of the Torelli groups are not finitely generated in general. The first infiniteness result for Torelli groups was obtained by McCullough and Miller~\cite{MCM86} who showed that $\I_2$ is not finitely generated. Mess~\cite{Mes92} proved that $\I_2$ is an infinitely generated free group, and hence has not finitely generated~$H_1$. The first general result of non‑finite generation of higher homology groups was obtained by Akita~\cite{Aki01} who showed that $\dim H_*(\I_g;\Q)=\infty$, provided that $g\ge 7$. The groups $H_3(\I_3;\Z)$ and~$H_4(\I_3;\Z)$ were shown to be not finitely generated by Johnson and Millson (unpublished, see~\cite{Mes92}) and Hain~\cite{Hai02}, respectively. Later, Bestvina, Bux, and Margalit~\cite{BBM10} proved that the cohomological dimension of~$\I_g$ is equal to $3g-5$, and the top homology group~$H_{3g-5}(\I_g;\Z)$ is not finitely generated. Finally, the author~\cite{Gai24} extended the non‑finite generation statement for $H_k(\I_g;\Z)$ to all $k$ in range $2g-3\le k\le 3g-6$.

\begin{remark}
 Vladimirov~\cite{Vla25}, \cite{Vla26} studied the subgroup $H_k^{\mathrm{ab,sep}}(\I_g;\Z)\le H_k(\I_g;\Z)$ generated by special homology classes---namely, abelian cycles corresponding to $k$-tuples of pairwise disjoint separating simple closed curves on~$\Sigma_g$. He proved that this subgroup is a finite-dimensional vector space over~$\Z/2\Z$ in exactly the same range $k\le g-2$ as in Theorem~\ref{thm_main}. Although Vladimirov's result is not used in our proof of Theorem~\ref{thm_main}, it provided significant motivation for arriving at it.
\end{remark}

\subsection{Algebraicity}
We put $H=H_1(\Sigma_{g,p}^b;\Z)$ and choose an isomorphism $H\cong\Z^{2g}$ that takes the intersection form to the standard unimodular skew-symmetric form~$(\cdot,\cdot)$ on~$\Z^{2g}$. (Such an isomorphism exists, since $b+p\le 1$.) The action of the mapping class group~$\Mod_{g,p}^b$ on~$H$ preserves the intersection form and hence defines a short exact sequence
\begin{equation*}
 1\longrightarrow \I_{g,p}^b\longrightarrow\Mod_{g,p}^b\longrightarrow\Sp_{2g}(\Z)\longrightarrow 1.
\end{equation*}
Hence all homology groups $H_k(\I_{g,p}^b;\Z)$ are $\Sp_{2g}(\Z)$-modules.

We recall some facts on arithmetic groups that we will need, see~\cite{Ser79}, \cite{PlRa94}. We always consider the following setup:
\begin{enumerate}
 \item $\bG$ is a connected semisimple algebraic subgroup of~$\bGL_n$ defined over~$\Q$,
 \item $\Gamma$ is a Zariski dense arithmetic subgroup of its rational points~$\bG(\Q)$, where arithmeticity means that $\Gamma$ is commensurable with $\bG(\Z)=\bG(\Q)\cap\GL_n(\Z)$.
\end{enumerate}

Recall that a \textit{rational representation} of~$\bG$ on a finite-dimensional $\Q$-vector space~$V$ is a morphism of algebraic groups $\bG\to\bGL(V)$. A representation $\rho\colon\Gamma\to\GL(V)$ is called \textit{algebraic} if it can be obtained from a rational representation of~$\bG$ (defined over~$\Q$) by taking the rational points and then restricting to~$\Gamma$. (Note that an extension of~$\rho$ to a rational representation of~$\bG$ is unique whenever it exists, since $\Gamma$ is Zariski dense in~$\bG(\Q)$.) More generally, a representation~$\rho$ is called \textit{almost algebraic} if there exists a finite-index subgroup $\Lambda\le\Gamma$ such that the restriction of~$\rho$ to~$\Lambda$ is algebraic.

In this paper, we will only be interested in the case $\bG=\bSp_{2g}$ and $\Gamma=\Sp_{2g}(\Z)$ with $g\ge 2$. In this case, Bass, Milnor, and Serre~\cite[Theorem~16.2]{BMS67} proved that any representation of~$\Gamma$ on a finite-dimensional vector space over~$\Q$ is almost algebraic.

Recall that over a field of characteristic~$0$ all rational representations of~$\bSp_{2g}$ are semisimple and the irreducible rational representations of~$\bSp_{2g}$ are indexed by partitions $\lambda=[\lambda_1\ldots\lambda_m]$ of length~$m\le g$, where $\lambda_1\ge \cdots\ge\lambda_m\ge 1$; see~\cite{Hum75}, \cite{FuHa91}, and also a nice summary in~\cite[Sect.~7]{Put26}. We denote by $\bV_{\lambda}$ or $\bV_{\lambda_1,\ldots,\lambda_m}$ the irreducible representation that corresponds to a partition~$\lambda$. When we need to indicate the dependence of~$g$ in notation, we write~$\bV_{\lambda}(g)$ instead of~$\bV_{\lambda}$.

It is an important question whether the representations of~$\Sp_{2g}(\Z)$ on the rational homology groups~$H_k(\I_{g,p}^b;\Q)$ are algebraic without having to pass to a finite-index subgroup. The algebraicity was known for $H_1(\I_{g,p}^b;\Q)$ with $g\ge 3$ by a result of Johnson~\cite{Joh85b}. Minahan and Putman~\cite{MiPu25} have recently proved that $H_2(\I_{g,p}^b;\Q)$ is algebraic for $g\ge 6$. Our next result is as follows.

\begin{thm}\label{thm_alg_rep}
 Suppose that $k\le g-2$ and $b+p\le 1$. Then $H_k\bigl(\I_{g,p}^b;\Q\bigr)$, and hence $H^k\bigl(\I_{g,p}^b;\Q\bigr)$, is an algebraic representation of\/~$\Sp_{2g}(\Z)$. Moreover, $H_k\bigl(\I_{g,p}^b;\Q\bigr)$ splits into a direct sum of representations~$\bV_{\lambda_1,\ldots,\lambda_m}$ with $\lambda_1\le k$.
\end{thm}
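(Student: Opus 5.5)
Our plan is to deduce the statement from two inputs: Theorem~\ref{thm_main}, and the unipotency property announced in the abstract, namely $(t_x-1)^{k+1}$ annihilates $H_k(\I_{g,p}^b;\Z)$ for every symplectic transvection $t_x$ and every $k\le g-2$. We take the latter as proved in the body of the paper, via the spectral sequence for the action of $\I_{g,p}^b$ on Minahan's complex of homologous curves. By Theorem~\ref{thm_main}, $V=H_k(\I_{g,p}^b;\Q)$ is finite-dimensional. Then the Bass--Milnor--Serre theorem gives a finite-index subgroup $\Lambda\le\Sp_{2g}(\Z)$, which we may take to be a principal congruence subgroup $\Gamma(N)$, and a rational representation $\tilde\rho$ of $\bSp_{2g}$ with $\rho|_\Lambda=\tilde\rho|_\Lambda$.

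The key step is to show that $\rho$ and $\tilde\rho$ agree on all of $\Sp_{2g}(\Z)$. It suffices to check this on generators, and $\Sp_{2g}(\Z)$ is generated by transvections $t_x$ with $x$ primitive. Here $t_x^N\in\Lambda$, so $\rho(t_x)^N=\tilde\rho(t_x)^N$. Both $\rho(t_x)$ and $\tilde\rho(t_x)$ are unipotent. The first is unipotent by the unipotency condition. The second is unipotent because $t_x$ is a unipotent element of $\bSp_{2g}(\Q)$, and a rational representation sends unipotent elements to unipotent elements. For unipotent operators, $u=\exp\bigl(\tfrac1N\log(u^N)\bigr)$, so a unipotent operator is determined by its $N$-th power. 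Hence $\rho(t_x)=\tilde\rho(t_x)$, and $\rho$ is algebraic. The statement for $H^k$ follows by dualizing, since the dual of a rational representation is rational.

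For the highest-weight bound, we compute in the algebraic representation. Write $t_x=\exp(X)$ with $X=\log t_x$ a nilpotent element of $\fsp_{2g}(\Q)$, which is a long-root vector for a suitable choice of basis. The unipotency condition says $(\tilde\rho(t_x)-1)^{k+1}=0$ on $V$, which is equivalent to $d\tilde\rho(X)^{k+1}=0$. Hence the same holds on every irreducible summand $\bV_\lambda$. Take $X=e_{\theta}$, the root vector of the highest root $\theta=2\varepsilon_1$. Include it in an $\mathfrak{sl}_2$-triple with $h_\theta=\theta^\vee$. In $\bV_\lambda$, the lowest weight vector has weight $w_0\lambda=-\lambda$, and its $h_\theta$-weight is $-\lambda_1$. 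The $\mathfrak{sl}_2$-string through it therefore has length $\lambda_1+1$, so $e_\theta^{\lambda_1}\neq0$ on $\bV_\lambda$. Thus $d\tilde\rho(X)^{k+1}=0$ forces $\lambda_1\le k$. Semisimplicity of rational $\bSp_{2g}$-representations over $\Q$ gives the decomposition.

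We expect the main obstacle to be the unipotency condition itself. Its proof requires running the spectral sequence for the complex of homologous curves in the range where that complex is $(g-3)$-acyclic, and inducting on $k$ using the stabilizers. Once that input is granted, the steps above are formal.
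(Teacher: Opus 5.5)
Your proof is correct. For the bound $\lambda_1\le k$ you run the paper's argument in mirror image. The paper applies the lowest long root vector $X_{-2L_1}$ to a highest weight vector, and you apply $e_{2\varepsilon_1}$ to a lowest weight vector. Either way one gets a string of length $\lambda_1+1$, and $(t-1)^{\lambda_1}\neq 0$ contradicts Theorem~\ref{thm_annih}.

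The algebraicity step is where you genuinely differ. The paper proves Proposition~\ref{propos_alg_rep} by invoking Corollary~\ref{cor_decompose_prod}, the Serre--Lubotzky factorization $\rho=f\cdot u$ into an algebraic factor and a commuting finite-image factor. It then notes that $u(\tau)=f(\tau)^{-1}\rho(\tau)$ is unipotent of finite order, hence trivial, for every $\tau$ in the generating set $\CT$. You use only the definition of almost algebraic: $\rho$ and $\tilde\rho$ agree on a finite-index $\Lambda$, each transvection has a power in $\Lambda$, and in characteristic $0$ a unipotent operator is the unique unipotent $N$-th root of its $N$-th power.

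This route has three advantages:
\begin{enumerate}
\item It bypasses the factorization corollary entirely; the paper justifies that corollary only by citing Serre's computation and Lubotzky's Proposition~5.1.
\item It works verbatim in the full generality of Proposition~\ref{propos_alg_rep}, that is, for any AAR group generated by a conjugation-invariant set of unipotent elements.
\item It does not need $\Lambda$ to be a principal congruence subgroup. For any finite-index $\Lambda$, two of the cosets $\tau^i\Lambda$ must coincide, so some positive power of $\tau$ lies in $\Lambda$.
\end{enumerate}
The paper's route mainly buys a conceptual picture: the only possible failure of algebraicity is a finite-image twist, and unipotence kills it.

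One aside on the input you take as given: your guess about how it is proved is off. The paper's proof of Theorem~\ref{thm_annih} involves no induction on $k$ over stabilizers. It picks all orbit representatives to contain a fixed curve $c$ and lifts $t_x$ to $T_c$, which centralizes every stabilizer, so $t_x$ acts trivially already on the $E^1$ page. Hence $t_x-1$ lowers the filtration, and $(t_x-1)^{k+1}$ kills $H_k$ directly.
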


\subsection{Explicit description of the cohomology ring}

Kupers and  Randal-Williams~\cite{KuRW20} obtained the following conditional result: they computed the algebraic part $H^*\bigl(\I_g^1;\Q\bigr)^{\alg}$ of the rational cohomology ring $H^*\bigl(\I_g^1;\Q\bigr)$ in stable range $*\ll g$ \textit{under the assumption} that the spaces $H^*\bigl(\I_g^1;\Q\bigr)$ are finite-dimensional in stable range.
Theorem~\ref{thm_main} ensures that this assumption holds, and Theorem~\ref{thm_alg_rep} shows that the result of Kupers and  Randal-Williams gives a description of the \textit{whole} rational cohomology ring $H^*\bigl(\I_g^1;\Q\bigr)$ in stable range $*\ll g$. The stable range in which the explicit description holds was incorrectly determined in~\cite{KuRW20}. A correction is given in~\cite{KuRW26}: it is shown there that the aforementioned description holds for $*\le \frac{g-4}4$.

To state the result thus obtained we need the construction of twisted Miller--Morita--Mumford classes. Let $\Diff(\Sigma_g,D)$ denote the topological group of diffeomorphisms of~$\Sigma_g$ that are equal to the identity near a specified two-dimensional disk $D\subseteq \Sigma_g$; we endow this group with the $C^{\infty}$-topology. Let $\Tor(\Sigma_g,D)\le \Diff(\Sigma_g,D)$ be the corresponding (topological) Torelli group consisting of all diffeomorphisms that act trivially on~$H_1(\Sigma_g;\Z)$. Let
$$
\Sigma_g\stackrel{f}{\longrightarrow} E\stackrel{\pi}{\longrightarrow} B\Tor(\Sigma_g,D)
$$
be the universal $\Sigma_g$-bundle over $B\Tor(\Sigma_g,D)$, let $T_{\pi}E\to E$ be its vertical tangent bundle, and let $s\colon B\Tor(\Sigma_g,D)\to E$ be the section of~$\pi$ determined by the centre of the disk~$D$. By studying the Serre spectral sequence for~$\pi$, one can prove (see~\cite[Sect.~1.2]{KuRW20} for details) that the map $f^*\colon H^1(E;\Q)\to H^1(\Sigma_g;\Q)$ is a surjection and has a unique right inverse $\iota\colon H^1(E;\Q)\to H^1(\Sigma_g;\Q)$ satisfying $s^*\circ \iota=0$. Then for $m\in\Z_{\ge0}$ and $v_1,\ldots,v_r\in H^1(\Sigma_g;\Q)$, the \textit{twisted Miller--Morita--Mumford classes} are defined by
\begin{equation}\label{eq_MMM}
\kappa_{e^m}(v_1\wedge \cdots\wedge v_r)=\pi_!\bigl(e^m(T_{\pi}E)\iota(v_1)\cdots\iota(v_r)\bigr)\in H^{2m+r-2}\bigl(B\Tor(\Sigma_g,D);\Q\bigr),
\end{equation}
where $e$ denotes the Euler class and $\pi_!$ is the Gysin homomorphism for~$\pi$. Finally, it is known (see~\cite{EaEe69}) that the connected component of the identity in the group~$\Diff(\Sigma_g,D)$ is contractible. Hence $B\Tor(\Sigma_g,D)\simeq B\I_g^1$ and we obtain the classes
$$
\kappa_{e^m}(v_1\wedge \cdots\wedge v_r)\in H^{2m+r-2}\bigl(\I_g^1;\Q\bigr).
$$

Combining Theorem~A from~\cite{KuRW20} with our Theorems~\ref{thm_main} and~\ref{thm_alg_rep} and taking into account the corrections made in~\cite{KuRW26}, we immediately obtain the following result.

\begin{thm}\label{thm_explicit}
 In the stable range of degrees $*\le \frac{g-4}4$, the graded-commutative ring $H^*\bigl(\I_g^1;\Q\bigr)$ is generated by the classes
 $$
 \kappa_{e^m}(v_1\wedge\cdots\wedge v_r)\qquad\text{with }m\ge0,\,r\ge0,\text{ and\/ }2m+r-2>0.
 $$
 A complete set of relations in this range is given by
 \begin{enumerate}
  \item linearity in each~$v_i$,
  \item skew-symmetry: $\kappa_{e^m}(v_{\sigma(1)}\wedge\cdots\wedge v_{\sigma(r)})=\mathrm{sign}(\sigma)\cdot \kappa_{e^m}(v_1\wedge\cdots\wedge v_r)$,
  \item $\sum_{i=1}^g\kappa_{e^m}(v_1\wedge\cdots\wedge v_k\wedge a_i)\,\kappa_{e^n}(a_i^{\#}\wedge v_{k+1}\wedge\cdots\wedge v_r)=\kappa_{e^{m+n}}(v_1\wedge\cdots\wedge v_r)$,
  \item $\sum_{i=1}^g\kappa_{e^m}(v_1\wedge\cdots\wedge v_r\wedge a_i\wedge a_i^{\#})=\kappa_{e^{m+1}}(v_1\wedge\cdots\wedge v_r)$,
  \item $\kappa_{e^{2n}}(1)=0$,
 \end{enumerate}
 where $a_1,\ldots,a_{2g}$ is a basis of $H^1(\Sigma_g;\Q)$ and $a_1^{\#},\ldots,a_{2g}^{\#}$ is the Poincar\'e dual basis characterized by $\bigl\langle a_i^{\#}a_j,[\Sigma_g]\bigr\rangle=\delta_{ij}$.
\end{thm}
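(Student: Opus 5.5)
Theorem~\ref{thm_explicit} follows formally from Theorems~\ref{thm_main} and~\ref{thm_alg_rep} together with Theorem~A of~\cite{KuRW20}, in the corrected form of~\cite{KuRW26}; no new geometric input is needed. That theorem says the following. If $H^k\bigl(\I_g^1;\Q\bigr)$ is finite-dimensional for all $k$ in a given range, then in degrees $*\le\frac{g-4}{4}$ the algebraic part $H^*\bigl(\I_g^1;\Q\bigr)^{\alg}$ is the graded-commutative ring generated by the classes $\kappa_{e^m}(v_1\wedge\cdots\wedge v_r)$, subject exactly to relations (1)--(5). Here the algebraic part is the largest subrepresentation on which the $\Sp_{2g}(\Z)$-action extends to a rational representation of~$\bSp_{2g}$. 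So the plan is to check two things: the finiteness hypothesis holds, and the algebraic part is everything.

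First, the finiteness hypothesis. I will check which degrees Kupers and Randal-Williams need finite-dimensionality in to obtain the description up to degree $\frac{g-4}{4}$. If I recall correctly, they need it in degrees up to roughly the top of the stable range, and in any case up to at most $g-2$. Theorem~\ref{thm_main} with $b=1$, $p=0$ gives that $H_k\bigl(\I_g^1;\Z\bigr)$ is finitely generated for $k\le g-2$. Hence $H_k\bigl(\I_g^1;\Q\bigr)$ is finite-dimensional. By the universal coefficient theorem over~$\Q$, $H^k\bigl(\I_g^1;\Q\bigr)\cong\Hom\bigl(H_k(\I_g^1;\Q),\Q\bigr)$ is finite-dimensional in the same range. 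The range $k\le g-2$ contains $k\le\frac{g-4}{4}$ for all $g\ge 1$.

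Second, identify the algebraic part with the whole ring. By Theorem~\ref{thm_alg_rep}, $H^k\bigl(\I_g^1;\Q\bigr)$ is an algebraic $\Sp_{2g}(\Z)$-representation for $k\le g-2$. Therefore $H^k\bigl(\I_g^1;\Q\bigr)^{\alg}=H^k\bigl(\I_g^1;\Q\bigr)$ in every degree $k\le\frac{g-4}{4}$. The cup product is $\Sp_{2g}(\Z)$-equivariant, so this equality holds as graded rings in the range, and the presentation from~\cite{KuRW20} becomes a presentation of the full cohomology ring.

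The only step that needs care is the bookkeeping of ranges. I must make sure the hypothesis used in~\cite{KuRW20}/\cite{KuRW26} ("finite-dimensional in a stable range", which may involve degrees somewhat above $\frac{g-4}{4}$, e.g.\ for the Serre spectral sequence comparison) falls within $k\le g-2$. I must also check that it is stated for~$\I_g^1$, or can be transferred from the cases $\I_{g,1}$, $\I_g$, all of which Theorems~\ref{thm_main} and~\ref{thm_alg_rep} cover since $b+p\le1$. I will also confirm that "algebraic" in~\cite{KuRW20} matches our notion with no finite-index passage, which is exactly what Theorem~\ref{thm_alg_rep} provides.
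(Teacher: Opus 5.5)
Your proposal is correct and is essentially the paper's own argument: the paper obtains Theorem~\ref{thm_explicit} immediately from the conditional Theorem~A of~\cite{KuRW20}, with the stable range corrected to $*\le\frac{g-4}{4}$ as in~\cite{KuRW26}. As in your plan, Theorem~\ref{thm_main} supplies the finite-dimensionality hypothesis and Theorem~\ref{thm_alg_rep} identifies $H^*\bigl(\I_g^1;\Q\bigr)^{\alg}$ with the whole cohomology ring in that range.
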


\begin{remark}
 We write $\kappa_{e^m}(v_1\wedge\cdots\wedge v_r)$ instead of $\kappa_{e^m}(v_1\otimes\cdots\otimes v_r)$ as  in~\cite{KuRW20}, to emphasize the skew-symmetry.
\end{remark}

\begin{remark}\label{rem_MMM}
 For $r=0$, we will usually abbreviate the notation~$\kappa_{e^{m}}(1)$ simply to~$\kappa_{e^m}$. The classes~$\kappa_{e^m}$ are the usual (untwisted) Miller--Morita--Mumford classes; the standard notation for them is $e_m=\kappa_{e^{m+1}}$. These classes are defined by the same formula~\eqref{eq_MMM} in the cohomology of the whole mapping class group~$\Mod_g$. It is a standard fact (see~\cite[Sect.~2]{Mor87}, \cite[Sect.~3]{Mor99}) that the odd Miller--Morita--Mumford classes are pullbacks of certain cohomology classes of~$\Sp_{2g}(\Z)$ and hence vanish when restricted to the Torelli group. This is exactly relation~(5) from Theorem~\ref{thm_explicit}. In~\cite{KuRW20} this relation is written in the form~$\kappa_{\mathcal{L}_n}=0$, where $\mathcal{L}_n$ denotes the $n$th Hirzebruch $\mathcal{L}$-class of~$T_{\pi}E$. These are the same relations, since for a rank~$2$ oriented real vector bundle one has $\mathcal{L}_n=\frac{2^{2n}B_{2n}}{(2n)!}e^{2n}$, where $B_{2n}$ are the Bernoulli numbers.
\end{remark}

\begin{remark}
Similar presentations can also be written for the rings~$H^*(\I_g;\Z)$ and $H^*(\I_{g,1};\Z)$, using Theorem~4.1 from~\cite{RW23} instead of Theorem~A from~\cite{KuRW20}, together with the same correction from~\cite{KuRW26}. We do not include them here.
\end{remark}

As a consequence of Theorem~\ref{thm_explicit}, an explicit calculation of the $\Sp_{2g}(\Z)$-representation $H^3(\I_g^1;\Q)^{\alg}$ from~\cite[Theorem~8.1]{KuRW20} and~\cite[Section~2]{KuRW26} also becomes a result about the whole group~$H^3(\I_g^1;\Q)$. (A similar result for the second rational homology of~$\I_g^1$ was already known by the algebraicity result of Minahan and Putman in this case, see~\cite[Theorem~A]{MiPu25}.)

\begin{cor}
 For $g\ge16$, we have
 \begin{align*}
 H^3\bigl(\I_g^1;\Q\bigr)\cong \bV_1+\bV_{2,1}+3\bV_{1^3}+2\bV_{2^2,1}+3\bV_{2,1^3}+\bV_{3,2,1^2}\\
 {}+2\bV_{2^3,1}+\bV_{3,2^3}+4\bV_{1^5}+2\bV_{2^2,1^3}+\bV_{3^2,1^3}\\
 {}+2\bV_{2,1^5}+\bV_{2^3,1^3}+2\bV_{1^7}+\bV_{2^2,1^5}+\bV_{1^9}.
 \end{align*}
\end{cor}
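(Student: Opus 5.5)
This corollary follows directly from Theorem~\ref{thm_explicit} together with the algebraicity statement of Theorem~\ref{thm_alg_rep}. The plan has three steps.

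\textbf{Hypotheses.} For $g\ge16$ we have $3\le\frac{g-4}{4}$, so degree~$3$ lies in the stable range of Theorem~\ref{thm_explicit}. Also $3\le g-2$, so Theorems~\ref{thm_main} and~\ref{thm_alg_rep} apply to $H^3(\I_g^1;\Q)$.

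\textbf{Algebraic part is everything.} By Theorem~\ref{thm_main}, $H_3(\I_g^1;\Z)$ is finitely generated, hence $H^3(\I_g^1;\Q)$ is finite-dimensional. This is exactly the hypothesis under which Kupers and Randal-Williams computed $H^*(\I_g^1;\Q)^{\alg}$ in the corrected range of~\cite{KuRW26}. By Theorem~\ref{thm_alg_rep}, the whole $\Sp_{2g}(\Z)$-representation $H^3(\I_g^1;\Q)$ is algebraic. Therefore its maximal algebraic subrepresentation $H^3(\I_g^1;\Q)^{\alg}$ coincides with $H^3(\I_g^1;\Q)$.

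\textbf{Decomposition.} It remains to quote the decomposition of $H^3(\I_g^1;\Q)^{\alg}$ into irreducibles. This is given in~\cite[Theorem~8.1]{KuRW20}, as corrected in~\cite[Section~2]{KuRW26}. Alternatively, one could recompute it from the presentation in Theorem~\ref{thm_explicit}: in degree~$3$, enumerate the products of generators $\kappa_{e^m}(v_1\wedge\cdots\wedge v_r)$ of total degree~$3$, decompose the resulting $\Sp$-modules into $\bV_\lambda$'s, and subtract the relations~(3)--(5).

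The only point requiring care is that the range condition of~\cite{KuRW26} actually holds for degree~$3$ at $g=16$. Everything else is formal once the two main theorems are available.
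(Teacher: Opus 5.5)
Your proposal is correct and follows the paper's own argument: Theorems~\ref{thm_main} and~\ref{thm_alg_rep} remove the finite-dimensionality hypothesis of \cite{KuRW20} and identify $H^3(\I_g^1;\Q)$ with its algebraic part. Hence the computation of \cite[Theorem~8.1]{KuRW20}, as corrected in \cite[Section~2]{KuRW26}, applies directly, and $g\ge16$ is exactly the condition $3\le\frac{g-4}4$. One small point: the conditional hypothesis of Kupers--Randal-Williams concerns finite-dimensionality in all degrees of the stable range, not only in degree~$3$, but Theorem~\ref{thm_main} supplies this for every $k\le g-2$.
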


After we prove finite-dimensionality and algebraicity of~$H^*(\I_g^1;\Q)$ in stable range, and thus establish the presentation from Theorem~\ref{thm_explicit}, we can obtain further consequences from the results of~\cite{KuRW20}, which were originally \textit{conditional} results about the maximal algebraic subrepresentation~$H^*(\I_g^1;\Q)^{\alg}$ (under the condition of finite dimensionality), but now become \textit{unconditional} results about the whole ring~$H^*(\I_g^1;\Q)$.
In the next two subsections we obtain two such consequences: we prove Morita's conjecture on the invariant part of the cohomology of Torelli groups and the uniform representation stability for $\left\{H_k(\I_g^1;\Q)\right\}_{g=1}^{\infty}$.

\subsection{Morita's conjecture}

A remarkable theorem by Madsen and Weiss~\cite{MaWe07} says that
$$
\lim_{g\to\infty}H^*\bigl(\Mod_g^1;\Q\bigr)=\Q[e_1,e_2,e_3,\ldots],
$$
where $e_m$'s are the Miller--Morita--Mumford classes. As we have already mentioned (see Remark~\ref{rem_MMM}), the odd Miller--Morita--Mumford classes~$e_{2n+1}$ vanish when restricted to the Torelli group. An old conjecture due to Morita, see~\cite[Conjecture~3.4]{Mor99} (cf.~\cite[Problem~3.1]{Mad06}) asserts that any class~$e_{2n}$ is nontrivial in the rational cohomology of~$\I_g$ (or~$\I_g^1$) for sufficiently large~$g$, and moreover, the $\Sp_{2g}(\Z)$-invariant part of the cohomology of~$\I_g^1$ stabilizes and we have an isomorphism
$$
\lim_{g\to\infty}H^*\bigl(\I_g^1;\Q\bigr)^{\Sp_{2g}(\Z)}=\Q[e_2,e_4,e_6,\ldots].
$$
We prove this conjecture.

\begin{thm}\label{thm_inv}
 Suppose that $b\in\{0,1\}$. Then in the stable range $*\le \frac{2g-2}3$, we have
 $$
 H^*\bigl(\I_g^b;\Q\bigr)^{\Sp_{2g}(\Z)}=\Q[e_2,e_4,e_6,\ldots].
 $$
 In particular, the even Miller--Morita--Mumford classes~$e_{2n}$ are nonzero, and all monomials in them are linearly independent in this stable range.
\end{thm}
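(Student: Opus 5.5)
The plan is to combine Theorems~\ref{thm_main} and~\ref{thm_alg_rep} with the conditional computation of the invariant part due to Kupers and Randal-Williams. They show that, assuming $H^*(\I_g^1;\Q)$ is finite-dimensional in a range, the invariants $\bigl(H^*(\I_g^1;\Q)^{\alg}\bigr)^{\Sp_{2g}(\Z)}$ form the polynomial ring $\Q[e_2,e_4,\ldots]$ in a range. This range is of the form $*\le\frac{2g-2}{3}$, since the invariant part is governed by the Madsen--Weiss/Harer stable range, not by the Koszulness range.

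\textbf{Step 1: the invariant part is algebraic.} For $*\le \frac{2g-2}{3}\le g-2$ (valid for $g\ge 4$; small $g$ are handled by checking degree $0$, and for $g\le 3$ we have $\frac{2g-2}{3}<2$, so only degrees $0$ and $1$ occur, where Johnson's results apply), Theorem~\ref{thm_main} gives finite-dimensionality and Theorem~\ref{thm_alg_rep} gives algebraicity of $H^*(\I_g^b;\Q)$. Hence $H^*(\I_g^b;\Q)^{\Sp_{2g}(\Z)}$ coincides with the invariants of the algebraic part. By semisimplicity these invariants are the $\bV_0$-isotypic component.

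\textbf{Step 2: the invariants are the $E_2^{0,*}$ row.} Consider the Hochschild--Serre spectral sequence
$$E_2^{p,q}=H^p\bigl(\Sp_{2g}(\Z);H^q(\I_g^b;\Q)\bigr)\Rightarrow H^{p+q}(\Mod_g^b;\Q).$$
In the range $q\le g-2$, each $H^q(\I_g^b;\Q)$ is a finite sum of $\bV_\lambda$. By Borel's stability theorem, in degrees $p$ below the Borel range the groups $H^p(\Sp_{2g}(\Z);\bV_\lambda)$ vanish for $\lambda\neq 0$, and for $\lambda=0$ they equal $\Q[x_2,x_6,x_{10},\ldots]$ (degrees $4i-2$), the classes $x_i$ mapping to the odd MMM classes $e_{2i-1}$. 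Thus the $E_2$ page in the range is
$$E_2^{p,q}\cong\Q[e_1,e_3,\ldots]\otimes H^q(\I_g^b;\Q)^{\Sp_{2g}(\Z)}.$$
The abutment is $\Q[e_1,e_2,\ldots]$ by Madsen--Weiss, combined with Harer stability in the range $*\le\frac{2g-2}{3}$.

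\textbf{Step 3: compare via a Poincaré series argument.} The bottom row $E_2^{*,0}=\Q[e_1,e_3,\ldots]$ injects into the abutment, because those classes are nonzero there. The edge map $H^*(\Mod_g^b)\to E_2^{0,*}$ is restriction, and it sends the $e_{2n}$ to invariant classes. By multiplicativity, the image of the free algebra $\Q[e_1,e_2,\ldots]$ already fills a tensor product, which forces all differentials to vanish in the range. Comparing Poincaré series then gives $H^*(\I_g^b;\Q)^{\Sp}\cong\Q[e_2,e_4,\ldots]$, with the classes $e_{2n}$ nonzero and their monomials independent.

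The main obstacle is making this comparison rigorous: the Borel vanishing range and the ranges of Theorems~\ref{thm_main} and~\ref{thm_alg_rep} must jointly cover total degree $\le\frac{2g-2}{3}$. I would try to arrange this by an induction on degree, using the fact that Borel's range for $\Sp_{2g}(\Z)$ is roughly $g-1$, which exceeds $\frac{2g-2}{3}$. Alternatively, one can simply cite the conditional statement of~\cite{KuRW20} for the invariant part, which is proved exactly this way, and observe that its hypothesis is now verified.
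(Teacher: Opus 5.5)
\textbf{Gap in Step 3.} Your Steps 1 and 2 are sound. In the range, Theorems~\ref{thm_main} and~\ref{thm_alg_rep} together with Borel vanishing give $E_2^{p,q}\cong B^p\otimes A^q$, where $B=H^*(\Sp_{2g}(\Z);\Q)$ is identified with $\Q[e_1,e_3,\ldots]$ and $A=H^*(\I_g^b;\Q)^{\Sp_{2g}(\Z)}$. The abutment $C=\Q[e_1,e_2,\ldots]$ is a free $B$-module on $V=\Q[e_2,e_4,\ldots]$.

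Step~3, however, carries the entire content of the theorem and is not an argument. Multiplicativity only tells you that the classes $b\cdot r(v)$ (with $b\in B$, $v\in V$, and $r$ the restriction to $\I_g^b$) are permanent cycles. It says nothing about invariant classes of $\I_g^b$ outside $r(V)$, and these are exactly what must be excluded. Such a class would be transgressive: it would kill something in $B^{+}\otimes A$ and be invisible in $C$.

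A Poincar\'e series comparison cannot exclude such classes, because $E_\infty$ is only a subquotient of $E_2$. It yields $\sum_p\dim B^p\dim A^{n-p}\ge\dim C^n$, which is a \emph{lower} bound on $A$. You need the upper bound $\dim A^q\le\dim V^q$, and also injectivity of $r$ on $V$, which your phrase ``fills a tensor product'' silently presupposes. So the vanishing of the differentials is asserted, not proved.

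\textbf{Repair and comparison.} The missing idea is to use freeness of $C$ over $B$, not dimensions, inductively in the row. Assume $A^j=r(V^j)$ for $j<q$. Then rows below $q$ consist of permanent cycles, so $E_s^{s,q-s+1}=B^s\otimes A^{q-s+1}$. Suppose $y\in A^q$ had a first nonzero differential $d_s(y)=\sum_i b_i\otimes r(v_i)$. Then $c=\sum_i b_iv_i\in C^{q+1}$ would lie in $F^{s+1}C$. The rows of $E_\infty$ involved in this filtration range are spanned by symbols of products $b'v'$, so descending through the filtration gives $c=\sum_k b_k'v_k'$ with $\deg b_k'>s$. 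Freeness of $C=B\otimes V$ then forces $\sum_i b_i\otimes v_i=0$, a contradiction.

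Hence $A^q=E_\infty^{0,q}=r(C^q)=r(V^q)$, since $r$ kills $B^{+}C$. The same descent applied to $v\in V^q$ with $r(v)=0$, so $v\in F^1C^q$, shows $r$ is injective on $V^q$. This needs $\Q[e_1,e_2,\ldots]\to H^*(\Mod_g^b;\Q)$ to be injective up to degree $q+1$, which Harer stability provides.

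Repaired this way, your route genuinely differs from the paper's. The paper takes $S=\varnothing$ in Proposition~\ref{propos_KuRW}: the source then has basis the collections of odd labels $\ge 3$, which map to monomials in the $e_{2n}$. This settles $\I_g^1$. The paper then passes to $\I_{g,1}$ via the Leray--Hirsch property for $B\I_g^1\to B\I_{g,1}\to B\SO(2)$. Finally it reaches $\I_g$ by the transfer along $\Sigma_g\to B\I_{g,1}\to B\I_g$, using $\pi_!(\epsilon\,\pi^*y)=(2-2g)y$ and semisimplicity.

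Your spectral sequence argument would treat $b=0$ and $b=1$ uniformly, using only Borel vanishing and Madsen--Weiss rather than the labelled-partition machinery. Your fallback of citing the conditional computation of Kupers and Randal-Williams is what the paper does, but Proposition~\ref{propos_KuRW} concerns $\I_g^1$. For $b=0$ you would still need the extra passage through $\I_{g,1}$ that the paper carries out.
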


\begin{remark}\label{rem_eps}
In the case of the group~$\I_{g,1}$ we have an additional $\Sp_{2g}(\Z)$-invariant class $\epsilon\in H^2(\I_{g,1};\Q)$. Namely, consider the fibre bundle
$$
\Sigma_g \longrightarrow E\stackrel{\pi}{\longrightarrow} B\Tor^+(\Sigma_g,*),
$$
where $\Tor^+(\Sigma_g,*)$ is the Torelli group for the group of orientation-preserving diffeomorphisms of~$\Sigma_g$ that fix the prescribed base point $*\in\Sigma_g$. Then the tangent spaces to fibres of~$\pi$ at the base point~$*$ provide a  rank~$2$ oriented real vector bundle~$\eta$ over $B\Tor^+(\Sigma_g,*)\simeq B\I_{g,1}$. Then $\epsilon$ is the Euler class of this bundle. The analog of Theorem~\ref{thm_inv} in this case is the equality
$$
H^*(\I_{g,1};\Q)^{\Sp_{2g}(\Z)}=\Q[\epsilon,e_2,e_4,e_6,\ldots]
$$
in the same stable range $*\le \frac{2g-2}3$.
\end{remark}

\subsection{Representation stability.}

In~\cite{ChFa13} Church and Farb introduced a concept of \textit{uniform representation stability}. In the case of $\Sp_{2g}(\Q)$-representations, the definition is as follows.

Let $\{V_g\}$ be a sequence of rational $\Sp_{2g}(\Q)$-representations equipped with linear maps $\varphi_g\colon V_g\to V_{g+1}$ that is \textit{compatible} in the sense that $\varphi_g(\gamma v)=f_g(\gamma)\varphi_g(v)$ for all $\gamma\in \Sp_{2g}(\Q)$ and $v\in V_g$, where $f_g$ is the standard inclusion $\Sp_{2g}(\Q)\hookrightarrow  \Sp_{2g+2}(\Q)$. The sequence $\{V_g\}$ is said to be \textit{uniformly representation stable} if there exists some~$N$ such that each of the following  conditions holds:

\begin{enumerate}
 \item \textbf{Injectivity:} The map $\varphi_g\colon V_g\to V_{g+1}$ is injective for all $g\ge N$,
 \item \textbf{Surjectivity:} The induced map $\Ind_{\Sp_{2g}(\Q)}^{\Sp_{2g+2}(\Q)}V_g\to V_{g+1}$ is surjective for all $g\ge N$,
 \item \textbf{Multiplicity stability:} Write
 $$
 V_g\cong\bigoplus_{\lambda}c_{\lambda,g}\bV_{\lambda}(g).
 $$
 Then the multiplicities~$c_{\lambda,g}$ are independent of~$g$ for all $g\ge N$. In particular, for any~$\lambda$ for which $\bV_{\lambda}(N)$ is not defined $c_{\lambda,g}=0$ for all $g\ge N$.
\end{enumerate}

The uniform representation stability for $\left\{ H_1(\I_g^1;\Q)\right\}_{g=1}^{\infty}$ follows immediately from a classical result of Johnson~\cite{Joh85b}. The uniform representation stability for $\left\{ H_2(\I_g^1;\Q)\right\}_{g=1}^{\infty}$ was proved by Minahan and Putman, see~\cite{MiPu25}, \cite{Put26}. We extend this result to all homology groups. By Theorem~\ref{thm_alg_rep} each space~$H_k(\I_g^1;\Q)$ with $k\le g-2$ becomes a rational $\Sp_{2g}(\Q)$-representation.

\begin{thm}\label{thm_rs}
 For each~$k$, the sequence $\left\{ H_k\bigl(\I_g^1;\Q\bigr)\right\}_{g=1}^{\infty}$ is uniformly representation stable starting from $g=6k+1$. Moreover, the injectivity and multiplicity stability properties hold starting from $g=4k+4$. Also, for $g\ge 4k+4$, the $\Sp_{2g}(\Q)$-representation $H_k\bigl(\I_g^1;\Q\bigr)$ decomposes into irreducible summands $\bV_{\lambda_1,\ldots,\lambda_m}(g)$, each of which satisfies $\lambda_1+\cdots+\lambda_m\le 3k$ and $\lambda_1\le k$.
\end{thm}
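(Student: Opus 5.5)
Theorem~\ref{thm_rs} should follow by combining Theorems~\ref{thm_main}, \ref{thm_alg_rep} and~\ref{thm_explicit} with the representation-theoretic results of Kupers and Randal-Williams. The plan is to pass to cohomology, prove the three conditions for the dual sequence $\{H^k(\I_g^1;\Q)\}$, and then dualize.

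First, for $k\le g-2$, Theorem~\ref{thm_main} makes $H_k(\I_g^1;\Q)$ finite-dimensional. Theorem~\ref{thm_alg_rep} makes it a rational $\Sp_{2g}(\Q)$-representation, so $H^k(\I_g^1;\Q)=H_k(\I_g^1;\Q)^*$ is one too and equals its algebraic part. The stabilization map $\I_g^1\to\I_{g+1}^1$ (gluing a torus with two boundary components) gives compatible maps $\varphi_g$ on homology.

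For $k\le\frac{g-4}4$, i.e.\ $g\ge 4k+4$, Theorem~\ref{thm_explicit} gives a presentation of $H^*(\I_g^1;\Q)$ by twisted MMM classes with the listed relations. In this range, \cite{KuRW20} (with the correction \cite{KuRW26}) identifies this presentation with an explicit functor of $H$: it is the degree-$k$ part of a graded-commutative algebra built from Brauer-category / partition-type data, and its decomposition into $\bV_\lambda(g)$ has multiplicities independent of $g$. A generator $\kappa_{e^m}(v_1\wedge\cdots\wedge v_r)$ has degree $2m+r-2\ge r-2$ and carries $r$ tensor factors of $H$, and generators are of positive degree. Hence a degree-$k$ monomial involves at most $k+2\cdot(\text{number of factors})\le 3k$ copies of $H$, giving $|\lambda|\le 3k$. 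The bound $\lambda_1\le k$ comes from Theorem~\ref{thm_alg_rep}.

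The stabilization maps pull the twisted MMM classes back to classes of the same name. So on cohomology the restriction $H^k(\I_{g+1}^1;\Q)\to H^k(\I_g^1;\Q)$ is the natural map of these Schur-functor-type presentations. That map is surjective once the multiplicities have stabilized, and dualizing gives injectivity of $\varphi_g$ for $g\ge 4k+4$.

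Surjectivity of $\Ind V_g\to V_{g+1}$ is equivalent to $V_{g+1}$ being generated as an $\Sp_{2g+2}$-representation by the image of $V_g$. I expect this to be the \textbf{main obstacle}, and I am least confident about its threshold. My plan is to use the standard Church--Farb criterion: if injectivity and multiplicity stability hold, and every $\lambda$ appearing has $|\lambda|\le 3k$, then surjectivity holds once $g$ exceeds roughly twice the maximal weight. Concretely, the image of $\bV_\lambda(g)$ generates $\bV_\lambda(g+1)$ as soon as $g\ge|\lambda|+\ell(\lambda)$ or a similar bound, via highest-weight vectors supported on the first $|\lambda|$ symplectic coordinates. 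With $|\lambda|\le 3k$ and $\ell(\lambda)\le 3k$, this gives $g\ge 6k+1$, matching the stated bound. I will check the branching argument carefully: a highest weight vector of $\bV_\lambda(g+1)$ must lie in the image of $\bV_\lambda(g)$ whenever $\ell(\lambda)<g$, and the $3k$ bound on total size ensures this.
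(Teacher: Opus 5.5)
Your plan handles finite-dimensionality, algebraicity, multiplicity stability and the bound $\lambda_1\le k$ as the paper does. Your count for $|\lambda|\le 3k$ is a correct and more elementary substitute for the paper's appeal to \cite[Theorem~X.7]{KuRW26}: a degree-$k$ monomial has at most $k$ generators, and each generator uses $r\le(2m+r-2)+2$ copies of $H$. For injectivity, ``surjective once the multiplicities have stabilized'' is not a valid reason. The actual reason is that the \emph{target} $H^*(\I_g^1;\Q)$ is generated by classes $\kappa_{e^m}(w_1\wedge\cdots\wedge w_r)$ with $2m+r-2>0$. Each of these is $i_g^*$ of the corresponding class for $g+1$, by \eqref{eq_kappa_restrict} and because $i_g^*\colon H(g+1)\to H(g)$ is onto.

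The genuine gap is surjectivity. No criterion deduces it from injectivity, stable multiplicities and a degree bound, because nothing forces $\varphi_g$ to send the $\bV_\lambda(g)$-isotypic part of $V_g$ into the $\bV_\lambda(g+1)$-isotypic part of $V_{g+1}$. For example, $\bV_{1,1}(g+1)|_{\Sp_{2g}}\cong\bV_{1,1}(g)\oplus 2\bV_1(g)\oplus\Q$. So for $V_g=\bV_1(g)\oplus\bV_{1,1}(g)$ there are injective compatible maps $V_g\to V_{g+1}$ whose image lies inside $\bV_{1,1}(g+1)$. The multiplicities are constant and all $|\lambda|\le 2$, yet the $\Sp_{2g+2}$-span of the image misses $\bV_1(g+1)$ for every $g$. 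Your highest-weight-vector argument only describes the abstract branching inclusion $\bV_\lambda(g)\subseteq\bV_\lambda(g+1)$, not the specific map $i_{g*}$. The threshold is also not explained by your count, since $|\lambda|+\ell(\lambda)\le 6k$ gives $6k$, not $6k+1$.

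What is missing is input about how $i_g^*$ acts on the multiplicity spaces of Torelli cohomology. The paper supplies this in Proposition~\ref{propos_theta}. The isomorphisms $\widehat{\Phi}_{g,S}$ onto $\bigl(H^*(\I_g^1;\Q)\otimes H(g)^{\otimes S}\bigr)^{\Sp_{2g}(\Z)}$, valid for $*+|S|\le\frac{2g-2}3$, are compatible with $i_g^*$. This needs a separate check for parts of size~$2$ and label~$0$, where \eqref{eq_kappa_restrict} fails. Hence $i_g^*$ is an isomorphism on these invariants. Now suppose an irreducible $W\cong\bV_\lambda(g+1)\subseteq H^k(\I_{g+1}^1;\Q)$ had $i_g^*(W)=0$. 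Take $|S|=|\lambda|\le 3k$; then the nonzero space $\bigl(W\otimes H(g+1)^{\otimes S}\bigr)^{\Sp_{2g+2}(\Z)}$ lies in the kernel. This is a contradiction once $k+3k\le\frac{2g-2}3$, i.e.\ $g\ge 6k+1$, which is where the bound comes from.
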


The sum $|\lambda|=\lambda_1+\cdots+\lambda_m$ is called the \textit{degree} of an irreducible representation~$\bV_{\lambda}(g)$; it is equal to the smallest number~$d$ such that $\bV_{\lambda}(g)$ appears in~$H(g)^{\otimes d}$, where $H(g)$ is the tautological representation of~$\Sp_{2g}(\Q)$.

The injectivity and multiplicity stability properties follow immediately from Theorems~\ref{thm_main} and~\ref{thm_alg_rep} and the results from~\cite{KuRW20} (with the corrections from~\cite{KuRW26}). Surjectivity, however, requires additional work, see Proposition~\ref{propos_theta}. Note that for $k=2$ the surjectivity property was already known from a result of Boldsen and Dollerup~\cite{BoDo12}, but for $k>2$ no such result is available. The author does not know whether it is possible to extract the surjectivity property starting from $g=4k+4$ from the results of~\cite{KuRW20} and~\cite{KuRW26}.

\subsection{Outline of the proofs of Theorems~\ref{thm_main} and~\ref{thm_alg_rep}}
For $x\in\Z^{2g}$, the transformation $t_x\in\Sp_{2g}(\Z)$ defined by
$$
t_xy=y+(x,y)x,
$$
as well as any of its powers~$t_x^{\xi}$, is called a \textit{symplectic transvection}.

Our approach is partially inspired by the author's proof (see~\cite{Gai25}) of the finite generation of the abelianization~$(\K_g^b)^{\ab}$ for $g\ge 3$ and $b\in\{0,1\}$, where $\K_g^b\le\I_g^b$ is the \textit{Johnson kernel}, i.\,e., the subgroup generated by all Dehn twists about separating simple closed curves.

A key role in our approach is played by the following property of $\Sp_{2g}(\Z)$-modules, which we shall call transvection unipotence.

\begin{defin}\label{defin_transv_unip}
 An $\Sp_{2g}(\Z)$-module~$M$ will be called \textit{transvection unipotent} if there exists a positive integer~$n$ such that $(t_x-1)^nM=0$ for all $x\in\Z^{2g}$. The smallest $n$ with this property will be called the \textit{transvection unipotency index} of~$M$.
\end{defin}

The main result on which Theorems~\ref{thm_main} and~\ref{thm_alg_rep} rely is as follows.

\begin{thm}\label{thm_annih}
 If $k\le g-2$ and $b+p\le 1$, then $H_k\bigl(\I_{g,p}^b;\Z\bigr)$ is a transvection unipotent
$\Sp_{2g}(\Z)$-module of  index $\le k+1$, that is,
\begin{equation}\label{eq_ucond}
(t_x-1)^{k+1}H_k\bigl(\I_{g,p}^b;\Z\bigr)=0
\end{equation}
for all $x\in H=\Z^{2g}$.
\end{thm}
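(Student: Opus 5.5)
We will prove \eqref{eq_ucond} by induction on~$k$, using the spectral sequence for the action of~$\I_{g,p}^b$ on the complex of homologous curves, as announced in the abstract. For $k=0$ the claim is trivial, since $H_0(\I_{g,p}^b;\Z)=\Z$ is the trivial module. It suffices to treat primitive~$x$, because $t_{mx}=t_x^{m^2}$ and $t_x^N-1=(t_x-1)(1+t_x+\cdots+t_x^{N-1})$, where the second factor commutes with $t_x-1$. For primitive~$x$, realize $x$ as the class of an oriented nonseparating simple closed curve~$\gamma$. Let $\mathcal{C}_x$ be the complex of homologous curves for~$x$: its simplices are collections of pairwise disjoint, non-isotopic curves all in the class~$x$. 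By Minahan's theorem $\mathcal{C}_x$ is $(g-3)$-acyclic. The Torelli group acts on~$\mathcal{C}_x$, and so does the Dehn twist $T_\gamma\in\Mod_{g,p}^b$, which maps to~$t_x$ and normalizes~$\I_{g,p}^b$. Hence the equivariant homology spectral sequence
$$
E^1_{s,q}=\bigoplus_{\sigma\in\Sigma_s/\I}H_q(\mathrm{Stab}_{\I}(\sigma);\Z)\Longrightarrow H_{s+q}(\I_{g,p}^b;\Z)
$$
converges in total degrees $\le g-2$. It carries a compatible action of~$t_x$, induced by conjugation by~$T_\gamma$.

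The key observation is that $T_\gamma$ acts on each $E^1$ term in a controlled way. For a simplex $\sigma=\{\gamma_0,\dots,\gamma_s\}$ with $\gamma_i$ homologous to~$\gamma$, the curves $\gamma_0,\ldots,\gamma_s$ cut the surface into pieces. Each product $T_{\gamma_i}T_{\gamma_j}^{-1}$ is a bounding pair map lying in~$\I$ and commuting with $\mathrm{Stab}_{\I}(\sigma)$. So $T_{\gamma}$ acts on the $\sigma$-summand the same way as $T_{\gamma_0}$ (up to an inner automorphism of~$\I$, which acts trivially on homology), once we move~$\gamma$ to~$\gamma_0$ by a Torelli element. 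The point is that $T_{\gamma_0}$ fixes~$\sigma$ and centralizes its stabilizer, so it acts trivially on $H_q(\mathrm{Stab}_\I(\sigma))$. The substantive issue is that $t_x$ permutes the $\I$-orbits of simplices. I expect that $t_x$ acts on the $E^1$ page as a transformation that is identity modulo a filtration, and one has to check this action carefully. Concretely, I would aim for the statement that $t_x-1$ kills $E^1_{s,q}$ for $s\ge0$, at least up to the vertex-choice issue handled by bounding pair maps. A convergence argument then bounds the nilpotency: $H_k$ has a filtration with at most $k+1$ nonzero graded pieces $E^\infty_{s,k-s}$, $s=0,\ldots,k$. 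If $t_x-1$ kills each graded piece, we get $(t_x-1)^{k+1}H_k=0$. This also explains why the bound is $k+1$ and where the hypothesis $k\le g-2$ enters, namely through the acyclicity of~$\mathcal{C}_x$ in degrees $\le g-3$.

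The main obstacle is precisely showing that $t_x$ acts trivially on $E^\infty_{s,k-s}$, or even on~$E^1$. A Torelli element taking~$\gamma$ to~$\gamma_0$ exists because the curves are homologous, but it must be chosen coherently across all simplices, and different choices differ by elements of $\I$ that act nontrivially on the indexing set. If triviality on~$E^1$ fails, my fallback is to use induction on~$k$. The stabilizers $\mathrm{Stab}_\I(\sigma)$ are, up to extensions by free abelian groups generated by bounding pair maps, products of Torelli-type groups of lower-genus subsurfaces. For these groups the inductive hypothesis gives unipotency of appropriate index with respect to transvections of the subsurface. I would then assemble the indices, checking that they add up to at most $k+1$ along the $k+1$ columns. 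Controlling this bookkeeping, especially for $s=0$, where the stabilizer of a single curve is large and the induction must supply index~$1$ for the transvection~$t_x$ itself, is the step I would work on first.
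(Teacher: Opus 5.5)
Your ingredients are the paper's: the spectral sequence of the $\I$-action on $\CC_x$, the lift $T_\gamma$ of $t_x$, the fact that a lift fixing $\sigma$ and centralizing $\I_\sigma$ acts trivially on $H_q(\I_\sigma)$, and the length-$(k+1)$ filtration. But you never close the central step. You call triviality of $t_x$ on $E^1$ the ``main obstacle'', you worry that $t_x$ permutes $\I$-orbits and that lifts must be chosen coherently, and you fall back on an induction. As written, this is a gap, because the key step is left unproved.

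The obstacle is illusory. By Proposition~\ref{propos_spec_seq_aut} and diagram~\eqref{eq_cd_iota}, $\delta\in\CG/\I$ acts on the summand $\iota_\sigma H_q(\I_\sigma)$ by $\iota_{h\sigma}\circ h_*\circ\iota_\sigma^{-1}$ for \emph{any} lift $h$ of $\delta$. The result is independent of $h$ and of the orbit representatives, so you may choose the lift separately for each $\sigma$.

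Take $h=T_{\gamma_0}$ with $\gamma_0$ a vertex of $\sigma$. This shows at once that $t_x$ fixes every $\I$-orbit of simplices, and that it acts on every summand by $(T_{\gamma_0})_*=\mathrm{id}$. The paper instead uses transitivity of $\I$ on curves in class $x$ to choose all representatives through one fixed curve $c$, so that the single lift $T_c$ works; your per-simplex version does not even need transitivity.

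Hence $t_x$ is trivial on $E^1$, hence on $E^\infty$. So $(t_x-1)\CF_{p,q}\subseteq\CF_{p-1,q+1}$, which gives $(t_x-1)^{k+1}H^{\I}_k(\CC_x(\Sigma_g);\Z)=0$. This passes to $H_k(\I;\Z)$ because the natural map from equivariant homology is an isomorphism for $k\le g-3$ and a surjection for $k=g-2$; it is not ``convergence'' at $k=g-2$, but surjectivity suffices. No induction on $k$ and no analysis of stabilizers as lower-genus Torelli groups is needed. You should, however, record that the $\CG$-action is without rotations, since the spectral sequence is used in that form.

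One further point needs care when $b+p=1$, and it applies equally to the paper's setup. The claim ``$T_{\gamma_0}$ centralizes $\I_\sigma$'' holds only if $\I_\sigma$ fixes the curves of $\sigma$ up to isotopy in $\Sigma^b_{g,p}$ itself. If $\I^b_{g,p}$ acts on $\CC_x(\Sigma_g)$ through capping or filling, then $\I_\sigma$ contains the whole point-pushing kernel. Conjugation by $T_c$ sends the push along a loop $\beta$ to the push along $T_c(\beta)$. The Johnson homomorphism sends a push along a loop of class $y$ to $\pm y\wedge\omega$. So the images in $\wedge^3H$ of such a class in $H_1(\I_\sigma;\Z)$ and of its $(T_c)_*$-image differ by $\pm(x,y)\,x\wedge\omega\neq0$ whenever $(x,y)\neq0$. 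Thus $E^1$-triviality genuinely fails in that setup. For $b+p=1$ you should run the argument with homologous curves on $\Sigma^b_{g,p}$ itself, where the centralizer claim is true; this requires the corresponding $(g-3)$-acyclicity input for that complex.
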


The proof of this theorem is based on the study of a spectral sequence
\begin{equation}\label{eq_ss_intro}
 E^1_{p,q}=\bigoplus_{\sigma\in\fX_p}H_q(\I_{\sigma};\Z) \ \Longrightarrow \ H_{p+q}^{\I}\bigl(\CC_x(\Sigma_g);\Z\bigr)
\end{equation}
associated with the action of~$\I=\I_{g,p}^b$ on the \textit{complex of homologous curves}~$\CC_x(\Sigma_g)$ defined by Putman~\cite{Put08}. Here $\fX_p$ is a set of representatives for $\I$-orbits of $p$-cells of~$\CC_x(\Sigma_g)$. Minahan~\cite{Min23} proved that the complex~$\CC_x(\Sigma_g)$ is $(g-3)$-acyclic, which implies that the spectral sequence~\eqref{eq_ss_intro} converges to the homology of~$\I$ in dimensions $k<g-2$, and in dimension~$g-2$, the resulting limit group surjects onto $H_{g-2}(\I;\Z)$. We will prove that the transvection~$t_x$ acts trivially on the spectral sequence~\eqref{eq_ss_intro}. The required unipotence condition~\eqref{eq_ucond} will follow, since the $E^{\infty}$ page is the associated graded of some filtration in the homology of~$\I$.

It turns out that Theorem~\ref{thm_annih} is already sufficient to deduce Theorem~\ref{thm_alg_rep} from Theorem~\ref{thm_main}. Indeed, we will prove in Section~\ref{section_algebraic} that any finite-dimesional transvection unipotent representation of~$\Sp_{2g}(\Z)$ with $g\ge 2$ is algebraic.

The second ingredient in the proof of Theorem~\ref{thm_main} will be results on bounded elementary generation and finite width for the groups~$\Sp_{2g}(\Z)$, where $g\ge 2$.

\begin{defin}\label{defin_fw}
  A group $\Gamma$ is said to have \textit{finite width} with respect to a finite sequence of (not necessarily distinct) elements $\gamma_1,\ldots,\gamma_m$ if
  every element $\gamma\in\Gamma$ can be written in the form
 $$
 \gamma=\gamma_1^{k_1}\cdots\gamma_m^{k_m},\qquad k_i\in\Z,
 $$
 or equivalently, if $\Gamma$ equals the product of its cyclic subgroups:
$
\Gamma=\langle\gamma_1\rangle\cdots \langle\gamma_m\rangle.
$
\end{defin}

We need the following result that is due to Tavgen~\cite{Tav91}. (This result is stated in~\cite{Tav91} in a slightly different form; we will explain how to extract it from the results of that paper in Section~\ref{section_bounded}.)

\begin{propos}\label{propos_fw}
 If $g\ge 2$, then there exists a finite sequence of (not necessarily distinct) elements $x_1,\ldots,x_m\in\Z^{2g}$ such that the group~$\Sp_{2g}(\Z)$ has finite width with respect to the corresponding sequence of symplectic transvections: $\Sp_{2g}(\Z)=\langle t_{x_1}\rangle\cdots \langle t_{x_m}\rangle$.
\end{propos}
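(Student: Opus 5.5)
The plan is to reduce the statement to Tavgen's theorem on bounded generation of elementary Chevalley groups by root subgroups. Then I would show that each root subgroup of $\Sp_{2g}(\Z)$ lies in a product of boundedly many cyclic groups generated by symplectic transvections.

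\emph{Step 1 (input from Tavgen).} Let $\Phi=C_g$ be the root system of $\bSp_{2g}$, with $g\ge 2$ so that the rank is at least $2$. For $\alpha\in\Phi$ let $U_\alpha=\{x_\alpha(s)\mid s\in\Z\}$ be the corresponding root subgroup of $\Sp_{2g}(\Z)$. Tavgen~\cite{Tav91} proves that the elementary subgroup $E(\Phi,\mathcal{O})$ has bounded generation with respect to the root subgroups, for the ring of integers $\mathcal{O}$ of a number field and rank at least $2$. Concretely, there is $N$ such that every element is a product of at most $N$ elements, each lying in some $U_\alpha$. Since $\Z$ is Euclidean, $\Sp_{2g}(\Z)$ is generated by elementary symplectic matrices, so $E(C_g,\Z)=\Sp_{2g}(\Z)$. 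There are only finitely many roots, and zero exponents are allowed. So I would fix an ordering $\alpha_1,\ldots,\alpha_{|\Phi|}$ of $\Phi$ and repeat it $N$ times. This gives a fixed finite sequence of roots $\beta_1,\ldots,\beta_M$ with
$$
\Sp_{2g}(\Z)=U_{\beta_1}\cdots U_{\beta_M}.
$$
The main obstacle I expect is precisely this step. Tavgen's formulation is in terms of Chevalley groups and their elementary subgroups, so one has to check carefully that his bounded-generation statement applies to type $C_g$ over $\Z$, including the case $g=2$. One also has to match his root elements with explicit matrices in our symplectic basis $e_1,\ldots,e_g,f_1,\ldots,f_g$.

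\emph{Step 2 (long roots).} For a long root the root element is $y\mapsto y+s(e_i,y)e_i$ or $y\mapsto y+s(f_i,y)f_i$, up to sign conventions. This is literally $t_{e_i}^{\pm s}$ or $t_{f_i}^{\pm s}$. Hence $U_\alpha=\langle t_{e_i}\rangle$ or $\langle t_{f_i}\rangle$.

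\emph{Step 3 (short roots).} A short root element has the form
$$
y\mapsto y+s\bigl((u,y)v+(v,y)u\bigr)
$$
with $u,v$ among $\pm e_i,\pm f_j$, $i\ne j$, and $(u,v)=0$; examples are $u=e_i,v=f_j$ or $u=e_i,v=e_j$. Since $(u,v)=0$, the transvections $t_u$, $t_v$, $t_{u+v}$ pairwise commute. A direct computation then gives
$$
t_{u+v}^{s}t_u^{-s}t_v^{-s}(y)=y+s(u+v,y)(u+v)-s(u,y)u-s(v,y)v=y+s\bigl((u,y)v+(v,y)u\bigr).
$$
Therefore $U_\alpha\subseteq\langle t_{u+v}\rangle\langle t_u\rangle\langle t_v\rangle$.

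\emph{Step 4 (conclusion).} In the product $U_{\beta_1}\cdots U_{\beta_M}$, I would replace each $U_{\beta_j}$ by the one or three cyclic transvection groups found in Steps 2 and 3. This yields a finite sequence $x_1,\ldots,x_m\in\Z^{2g}$ with
$$
\Sp_{2g}(\Z)=U_{\beta_1}\cdots U_{\beta_M}\subseteq\langle t_{x_1}\rangle\cdots\langle t_{x_m}\rangle\subseteq\Sp_{2g}(\Z),
$$
so equality holds, which is the statement of Proposition~\ref{propos_fw}.
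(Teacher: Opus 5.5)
Your proposal is correct and follows the paper's proof essentially step for step: Tavgen's bounded generation of $\Sp_{2g}(\Z)=G(C_g,\Z)$ by root elements, padded to a fixed finite sequence of roots, with long root elements being symplectic transvections and short root elements being Eichler--Siegel--Dickson transvections. Your factorization $t_{u+v}^{s}t_u^{-s}t_v^{-s}$ into pairwise commuting transvections is the identity the paper uses, and your check that the cross terms vanish because $(u,v)=0$ is correct.
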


A key observation is that this proposition easily implies that any finitely generated transvection unipotent $\Sp_{2g}(\Z)$-module is finitely generated as an abelian group. Kassabov and Putman~\cite{KaPu20} proved that the groups $H_2(\I_{g,p}^b;\Z)$ are finitely generated as $\Sp_{2g}(\Z)$-modules. Combining this result with Theorem~\ref{thm_annih} and Proposition~\ref{propos_fw}, we obtain the assertion of Theorem~\ref{thm_main} for $k=2$.

However, in the case $k>2$ this argument does not work, because we initially have no result on the finite generation of the $\Sp_{2g}(\Z)$-module $H_k(\I_{g,p}^b;\Z)$. Therefore, we need to learn how to do without this result. We will do this as follows.

We proceed by induction on~$k$, so we assume that all homology groups~$H_i(\I_{g,p}^b;\Z)$ with $i<k$ are finitely generated. Raghunathan~\cite{Rag68} and Ivanov~\cite{Iva87} proved the finite generation of all homology groups (even with twisted coefficients) of the groups~$\Sp_{2g}(\Z)$ and~$\Mod_{g,p}^b$, respectively. Then studying the Lyndon--Hochschild--Serre spectral sequence for the short exact sequence
\begin{equation*}
 1\longrightarrow \I_{g,p}^b\longrightarrow\Mod_{g,p}^b\longrightarrow\Sp_{2g}(\Z)\longrightarrow 1,
\end{equation*}
we will prove that the $k$th homology group~$H_k(\I_{g,p}^b;\Z)$ satisfies the following condition:
\medskip

\textit{For any finite-index subgroup $\Lambda\le\Sp_{2g}(\Z)$ and any $\Lambda$-module~$A$ that is finitely generated as an abelian group, the coinvariant group $\left(H_k\bigl(\I_{g,p}^b;\Z\bigr)\otimes A\right)_{\Lambda}$ is finitely generated.}
\smallskip

In general, for an $\Sp_{2g}(\Z)$-module~$M$, the condition of having finitely generated coinvariants~$(M\otimes A)_{\Lambda}$ for all~$\Lambda$ and~$A$ as described above is much weaker than the condition of being finitely generated as an~$\Sp_{2g}(\Z)$-module. Nevertheless, we will prove that for transvection unipotent modules these two conditions are equivalent, so the finite generation of coinvariants suffices to show that $H_k\bigl(\I_{g,p}^b;\Z\bigr)$ is a finitely generated abelian group. This will complete the proof of Theorem~\ref{thm_main}.

\subsection{Structure of the paper}
Section~\ref{section_ss} gathers the necessary preliminaries on a spectral sequence associated with the action of a group on a CW complex. Section~\ref{section_annih} is the core of the paper: we study the spectral sequence for the action of the Torelli group on a complex of homologous curves and prove Theorem~\ref{thm_annih}. In Section~\ref{section_U} we construct universal $\CT$-unipotent modules and prove their finite generation in the case of groups of finite width. Section~\ref{section_bounded} contains the proof of Theorem~\ref{thm_main} for~$k=2$. The proof of Theorem~\ref{thm_main} for $k>2$ occupies Sections~\ref{section_coinv}--\ref{section_proof_main}. More precisely, in Section~\ref{section_coinv}   we prove the finite generation of the coinvariants of the homology of Torelli groups. Section~\ref{section_quasiproj} provides an auxiliary representation‑theoretic construction. In Section~\ref{section_arithm} we recall the necessary facts about arithmetic groups and their representations. Finally, in Section~\ref{section_proof_main} we obtain a general result of finite generation for $\CT$-unipotent $\Gamma$-modules with finitely generated coinvariants, where $\Gamma$ is an arithmetic group of finite width (Theorem~\ref{thm_alg}). As a result, we prove Theorem~\ref{thm_main} in the general case. Section~\ref{section_algebraic} contains the proof of Theorem~\ref{thm_alg_rep} on algebraicity of the rational homology of Torelli groups in the stable range. Finally, in Sections~\ref{section_rs} and~\ref{section_inv} we prove Theorems~\ref{thm_rs} and~\ref{thm_inv}, respectively.

\subsection{Conventions and notation} In this paper, we always work with left modules. Modules over the group ring~$\Z[G]$ will be called $G$-modules. The symbol~$\otimes$ will denote the tensor product over~$\Z$ (or, equivalently, over~$\Q$ for $\Q$-vector spaces) unless explicitly stated otherwise.

An element~$a$ of an abelian group~$A$ is called \textit{primitive} if it cannot be written in the form $nb$ where $b\in A$ and $n>1$.

Throughout the paper we consider only surfaces~$\Sigma_{g,p}^b$ with $b+p\le 1$.

\subsection{Acknowledgements}
The author is grateful to Denis Osipov, Vasilii Rozhdestvenskii, and Andrei Vladimirov for fruitful discussions.

\section{Spectral sequence}\label{section_ss}

All statements in this section hold for homology with coefficients in any commutative ring~$R$; therefore, we omit the coefficients from the notation.

Assume that a group~$G$ acts cellulary and without rotations on a regular CW complex~$X$. Then we have the following spectral sequence (see~\cite[(7.7)]{Bro82}):
\begin{equation}\label{eq_spec_seq}
 E^1_{p,q}=\bigoplus_{\sigma\in\fX_p}H_q(G_{\sigma}) \ \Longrightarrow \ H_{p+q}^G(X).
\end{equation}
Here $\fX_p$ is a set of representatives for $G$-orbits of $p$-cells of~$X$, $G_{\sigma}$ denotes the stabilizer of~$\sigma$, and $H^G_*(X)$ is the equivariant homology of~$X$. `Without rotations' means that if $g\in G$ fixes a cell~$\sigma$ setwise, then $g$ fixes every point of~$\sigma$.  The convergency of~\eqref{eq_spec_seq} means that, for each~$k$, there is a filtration
\begin{equation}\label{eq_filtration}
 0=\CF_{-1,k+1}\subseteq\CF_{0,k}\subseteq\CF_{1,k-1}\subseteq\cdots\subseteq\CF_{k,0}=H_k^G(X)
\end{equation}
such that $E^{\infty}_{p,q}=\CF_{p,q}/\CF_{p-1,q+1}$.

The spectral sequence~\eqref{eq_spec_seq} is functorial in the following sense (see~\cite[Ch.~VII]{Bro82} and also~\cite[Sect.~3.2]{Gai21}):

\begin{enumerate}
 \item The inclusion $\iota_{\sigma}\colon H_q(G_{\sigma})\hookrightarrow E^1_{p,q}$ is independent of the choice of representatives for all other $G$-orbits of cells of~$X$. Moreover, if we replace~$\sigma$ with another representative~$g\sigma$ of the same $G$-orbit, then we get a commutative diagram
 \begin{equation}\label{eq_cd_iota}
 \begin{tikzcd}
  H_q(G_{\sigma}) \arrow[rd,hookrightarrow,"{\iota_{\sigma}}"] \arrow[dd,"{g_*}","\cong"'] & \\
  & E^1_{p,q} \\
  H_q(G_{g\sigma}) \arrow[ru,hookrightarrow,"{\iota_{g\sigma}}"']
 \end{tikzcd}
 \end{equation}
 where the isomorphism~$g_*$ is induced by the conjugation by~$g$. In particular, the summands in the direct sum decompositions of~$E^1_{p,q}$ are independent of the choice of a set~$\fX_p$.

 \item Suppose that $G$ acts cellulary and without rotations on~$X$, $H$ acts cellulary and without rotations on~$Y$, $\varphi\colon G\to H$ is a homomorphism, and $f\colon X\to Y$ is a $\varphi$-equivariant map of CW complexes that maps every cell of~$X$ homeomorphically onto a cell of~$Y$. Then the pair $\theta=(\varphi,f)$ induces a morphism
 $$
 E^r_{p,q}(\theta)\colon E^r_{p,q}(G,X)\to E^r_{p,q}(H,Y)
 $$
 of the corresponding spectral sequences~\eqref{eq_spec_seq} such that
 \begin{itemize}
  \item $E^1_{p,q}(\theta)$ coincides with the sum of the homomorphisms
  $$
  H_q(\varphi|_{G_{\sigma}})\colon H_q(G_{\sigma})\to H_q(H_{f(\sigma)}).
  $$
  \item the map $H_*(\theta)\colon H_*^G(X)\to H_*^H(Y)$ sends $\CF_{p,q}(G,X)$ to $\CF_{p,q}(H,Y)$ for all~$p$ and~$q$, and $E^{\infty}_{p,q}(\theta)$ is the associate graded homomorphism.
 \end{itemize}
 \end{enumerate}

The functoriality, in particular, implies the following proposition.

\begin{propos}\label{propos_spec_seq_aut}
 Assume that $G\trianglelefteq \CG$ is a normal subgroup and $\CG$ acts cellularly and without rotations on a regular CW complex~$X$. Then the quotient group $\Delta=\CG/G$ acts by automorphisms of the spectral sequence $E^r_{p,q}=E^r_{p,q}(G,X)$ so that the following conditions are satisfied:
 \begin{enumerate}
  \item If $\delta\in\Delta$ and $h\in\CG$ is a preimage of~$\delta$, then $\delta$ acts on~$E^1_{p,q}$ via a direct sum of homomorphisms
 \begin{equation}
 \label{eq_composite_iota}
\begin{tikzcd}  [row sep=small]
\iota_{\sigma}\bigl(H_q(G_{\sigma})\bigr) \arrow[d, phantom, sloped, "\subseteq"]
\arrow[r,"{\iota_{\sigma}^{-1}}","\cong"'] &
H_q(G_{\sigma}) \arrow[r,"{h_*}","\cong"'] &
H_q(G_{h\sigma}) \arrow[r,"{\iota_{h\sigma}}","\cong"'] &
\iota_{h\sigma}\bigl(H_q(G_{h\sigma})\bigr)
\arrow[d, phantom, sloped, "\subseteq"]\\
{}\,E^1_{p,q} &&& {}\,E^1_{p,q}
\end{tikzcd}
\end{equation}
where $h_*$ is induced by the conjugation by~$h$.

  \item The filtration~\eqref{eq_filtration} is invariant under the natural action of~$\Delta$ on~$H_*^G(X)$. Moreover, the action of~$\Delta$ on~$E^{\infty}_{p,q}$ is the associated graded of the action on this filtration.
 \end{enumerate}
\end{propos}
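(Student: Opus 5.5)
The plan is to define the action of each $\delta\in\Delta$ on the spectral sequence as the morphism induced, via the functoriality property~(2), by a pair $\theta_h=(c_h,h)$. Here $h\in\CG$ is a preimage of~$\delta$, the map $c_h\colon G\to G$ is conjugation $u\mapsto huh^{-1}$ (well defined because $G\trianglelefteq\CG$), and $h\colon X\to X$ is the action of~$h$ on~$X$. This map is $c_h$-equivariant: $h(ux)=(huh^{-1})(hx)$. It is cellular, and it sends every cell homeomorphically onto a cell. Property~(2) then gives a morphism $E^r_{p,q}(\theta_h)$ of $E^r_{p,q}(G,X)$ to itself.

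Next I will check that this gives an action of~$\CG$ that factors through~$\Delta$.
\begin{enumerate}
 \item Composition: $\theta_{h_1}\circ\theta_{h_2}=\theta_{h_1h_2}$, and $\theta_1$ is the identity. So if the induced morphisms are functorial in~$\theta$, we get an action of~$\CG$ by automorphisms; invertibility comes from $\theta_{h^{-1}}$. Functoriality of the construction in Brown~\cite[Ch.~VII]{Bro82} is built from the map of double complexes $C_*(X)\otimes_G F_*\to C_*(X)\otimes_G F_*$ determined by~$\theta$, so compositions go to compositions.
 \item Triviality on~$G$: this is the main point. For $h=u\in G$, the pair $(c_u,u)$ induces on $C_*(X)\otimes_{G}F_*$ the chain map $c\otimes f\mapsto uc\otimes uf$. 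Here $F_*$ is a free $G$-resolution of the coefficient ring, and we may take $\CG$-equivariant data, e.g.\ $F_*$ a $\CG$-resolution restricted to~$G$. The map $uc\otimes uf=u\cdot(c\otimes f)$ equals $c\otimes f$ in the tensor product over~$G$. So it is literally the identity on the double complex, hence on all pages and on $H^G_*(X)$.
\end{enumerate}
If instead one works only at the level of the abstract functoriality~(2), the same conclusion follows on~$E^1$ directly. By~(2), $\theta_u$ acts on $\iota_\sigma(H_q(G_\sigma))$ via $(c_u)_*\colon H_q(G_\sigma)\to H_q(G_{u\sigma})$ composed with~$\iota_{u\sigma}$. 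By diagram~\eqref{eq_cd_iota} this composite is exactly~$\iota_\sigma$, so the action is the identity. However, to be safe for higher pages I would use the double complex model, where $\CG$ acts on the double complex compatibly with both filtrations.

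Finally, I will read off the two claims. Claim~(1) is just the first bullet of property~(2) applied to~$\theta_h$, combined with the identification of the summands. The component $H_q(G_\sigma)\to H_q(G_{h\sigma})$ is $H_q(c_h|_{G_\sigma})=h_*$, and it lands in the summand $\iota_{h\sigma}$. By property~(1) and~\eqref{eq_cd_iota}, this description does not depend on whether $h\sigma$ is the chosen representative of its orbit. Claim~(2) is the second bullet of property~(2): $H_*(\theta_h)$ preserves every $\CF_{p,q}$, and $E^\infty(\theta_h)$ is the associated graded map. On $H^G_*(X)$, the map $H_*(\theta_h)$ is the natural action of~$h$, which is trivial for $h\in G$ by the same argument.

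The only real obstacle is rigorously justifying that the induced maps compose correctly and that inner elements act as the identity on all pages, not merely on~$E^1$. I would handle this by fixing a $\CG$-equivariant double complex $C_*(X)\otimes_G F_*$, with $F_*$ a projective $\Z[\CG]$-resolution. On it, $\CG$ acts by $h(c\otimes f)=hc\otimes hf$; this is well defined over~$G$ by normality, trivial on~$G$, and filtration-preserving. The whole spectral sequence is then functorial for this action.
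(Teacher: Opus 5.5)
Your proposal is correct and follows the paper's route: the paper deduces the proposition directly from functoriality~(2) applied to the pair (conjugation by $h$, action of $h$ on $X$), and uses diagram~\eqref{eq_cd_iota} to see that the result is independent of the preimage~$h$. Your explicit $\CG$-equivariant double complex $C_*(X)\otimes_G F_*$ makes the composition law and the triviality on~$G$ rigorous, but it is only needed for the action on the abutment~$H_*^G(X)$: agreement on~$E^1$ already forces agreement on all finite pages, since $E^{r+1}(\theta)=H(E^r(\theta))$.
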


\begin{remark}
 From the commutativity of~\eqref{eq_cd_iota} it follows that the homomorphisms~\eqref{eq_composite_iota} do not depend on the choice of the preimage~$h$ of~$\delta$ nor on the choice of representatives~$\sigma$ for $G$-orbits of $p$-cells.
\end{remark}

\begin{remark}
In fact, the assumptions that the group acts without rotations and that the CW complex~$X$ is regular are not necessary. However, without these assumptions, one would have to use homology with local coefficients and keep track of signs, which we would like to avoid.
\end{remark}

\section{Proof of Theorem~\ref{thm_annih}}\label{section_annih}
The assertion of Theorem~\ref{thm_annih} for $g\le 2$ is trivial, so we always assume that $g\ge 3$.

Consider a closed surface~$\Sigma_g$ of genus~$g$. The \textit{complex of curves}~$\CC(\Sigma_g)$ is a simplicial complex whose vertices are isotopy classes of essential simple closed curves on~$\Sigma_g$, and a tuple $\langle c_0,\ldots,c_k\rangle$ forms a simplex whenever the isotopy classes~$c_0,\ldots,c_k$ admit pairwise disjoint representatives.  Recall that a simple closed curve is called \textit{essential} if it is homotopically nontrivial. In what follows, we do not distinguish between a simple closed curve  and its isotopy class.

Choose a primitive homology class $x\in H_1(\Sigma_g;\Z)$. The \textit{complex of homologous curves}, defined by Putman~\cite{Put08} and denoted~$\CC_x(\Sigma_g)$, is the full subcomplex of~$\CC(\Sigma_g)$ spanned by all simple closed curves~$c$ that represent the homology class~$x$.  Minahan~\cite{Min23} proved that the complex~$\CC_x(\Sigma_g)$ is $(g-3)$-acyclic, that is, $\widetilde{H}_k\bigl(\CC_x(\Sigma_g);\Z\bigr)=0$ for all $k\le g-3$.

The Torelli group~$\I_g$ acts on~$\CC_x(\Sigma_g)$ cellularly and without rotations. In the cases of a surface with one puncture~$\Sigma_{g,1}$ or a surface with one boundary component~$\Sigma_g^1$, we define the actions of~$\I_{g,1}$ and~$\I_g^1$ on~$\CC_x(\Sigma_g)$ via the homomorphisms $\I_{g,1}\twoheadrightarrow \I_g$ and $\I_g^1\twoheadrightarrow \I_g$ obtained by filling in the puncture and attaching a disk to the boundary of~$\Sigma_g^1$, respectively.

Let us now prove Theorem~\ref{thm_annih}. Let $\I$ be any of the three Torelli groups~$\I_g$, $\I_{g,1}$, or~$\I_g^1$, and let $\Mod$ be the corresponding mapping class group (namely $\Mod_g$, $\Mod_{g,1}$, or~$\Mod_g^1$, respectively). Set $H=H_1(\Sigma_g;\Z)$. We want to prove that
\begin{equation}\label{eq_annih2}
 (t_x-1)^{k+1}H_k(\I;\Z)=0,\qquad k\le g-2,
\end{equation}
for all $x\in H$. Since $t_{nx}-1=t_x^{n^2}-1$ is divisible by~$t_x-1$, it suffices to prove~\eqref{eq_annih2} for only primitive classes~$x$.

Fix a primitive class $x\in H$ and consider the action of~$\I$ on the corresponding complex of homologous curves~$\CC_x(\Sigma_g)$. Let
\begin{equation*}
 E^1_{p,q}=\bigoplus_{\sigma\in\fX_p}H_q\left(\I_{\sigma};\Z\right) \ \Longrightarrow \ H_{p+q}^{\I}\bigl(\CC_x(\Sigma_g);\Z\bigr)
\end{equation*}
be the corresponding spectral sequence~\eqref{eq_spec_seq} and
\begin{equation}\label{eq_filtration2}
 0=\CF_{-1,k+1}\subseteq\CF_{0,k}\subseteq\CF_{1,k-1}\subseteq\cdots\subseteq\CF_{k,0}=H_k^{\I}\bigl(\CC_x(\Sigma_g);\Z\bigr)
\end{equation}
the corresponding filtration such that $E^{\infty}_{p,q}=\CF_{p,q}/\CF_{p-1,q+1}$.

Let $\CG$ denote the preimage of the infinite cyclic group~$\langle t_x\rangle$ under the surjective homomorphism $\varphi\colon\Mod\twoheadrightarrow \Sp(H)=\Sp_{2g}(\Z)$. Then we have a short exact sequence
$$
1\longrightarrow\I\longrightarrow\CG\stackrel{\varphi}{\longrightarrow}\langle t_x\rangle\longrightarrow 1.
$$
Since $t_x(x)=x$, the action of~$\I$ on~$\CC_x(\Sigma_g)$ extends to a cellular action of~$\CG$. Moreover, this action is without rotations, since for any simplex $\sigma=\langle c_0,\ldots,c_p\rangle$ and any connected component~$S$ of~$\Sigma_g\setminus(c_0\cup\cdots\cup c_p)$, the transvection~$t_x$ preserves the subgroup $H_1(S;\Z)\subseteq H$. By Proposition~\ref{propos_spec_seq_aut} the group $\langle t_x\rangle =\CG/\I$ acts by automorphisms of the spectral sequence~$E^r_{p,q}$.

\begin{propos}
 The action of\/~$t_x$ on the spectral sequence~$E^r_{p,q}$ is trivial.
\end{propos}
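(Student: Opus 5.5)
Since the action of~$\langle t_x\rangle$ on the spectral sequence is by morphisms of spectral sequences, and the differentials and all later pages $E^r$, $r\ge2$, are computed from~$E^1$, it suffices to show that $t_x$ acts trivially on the $E^1$ page. By Proposition~\ref{propos_spec_seq_aut}, the action of~$t_x$ on~$E^1_{p,q}$ is the direct sum over $\sigma\in\fX_p$ of the maps~\eqref{eq_composite_iota}, built from any preimage~$h\in\CG$ of~$t_x$. The remark after that proposition says these maps do not depend on the choice of~$h$. So the plan is, for each simplex $\sigma=\langle c_0,\ldots,c_p\rangle$ of~$\CC_x(\Sigma_g)$, to find a preimage $h=h_\sigma\in\CG$ of~$t_x$ such that $h\sigma=\sigma$ and conjugation by~$h$ induces the identity on $H_q(\I_\sigma;\Z)$. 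Then the composite~\eqref{eq_composite_iota} is $\iota_\sigma\circ\iota_\sigma^{-1}$, the identity on the summand, and $t_x$ is trivial on~$E^1$, hence on every page.

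The natural choice is the Dehn twist $h=T_{c_0}$ about one of the curves of~$\sigma$. Since $[c_0]=x$, the Picard--Lefschetz formula gives $\varphi(T_{c_0})=t_x$ (or $t_x^{-1}$, depending on the sign convention; in that case use $T_{c_0}^{-1}$). Thus $T_{c_0}^{\pm1}\in\CG$ is a preimage of~$t_x$. In the cases $\Sigma_{g,1}$ and $\Sigma_g^1$, we lift~$c_0$ to a curve on the punctured or bordered surface disjoint from the puncture or boundary, and take the Dehn twist there; its image in~$\Mod_g$ is still the twist about~$c_0$. Because the curves $c_0,\ldots,c_p$ are pairwise disjoint, $T_{c_0}$ fixes each~$c_i$, so $h\sigma=\sigma$.

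The remaining point, which I expect to be the only real content, is that conjugation by~$T_{c_0}$ acts trivially on $H_*(\I_\sigma)$. Every element $f\in\I_\sigma$ fixes~$c_0$ as an isotopy class, so $fT_{c_0}f^{-1}=T_{f(c_0)}^{\pm1}=T_{c_0}^{\pm1}$. Since~$f$ acts trivially on homology it preserves the orientation of~$c_0$, and in any case a Dehn twist does not depend on the orientation of its curve. Hence $T_{c_0}$ commutes with every element of~$\I_\sigma$. In the punctured or bordered case the stabilizer is the preimage of a stabilizer in~$\I_g$, so one must take care that $f$ fixes the lifted curve. I would handle this either by noting that $f$ maps the lift to another lift $c_0'$ with $T_{c_0'}$ differing from~$T_{c_0}$ by an element of~$\I$, or, more simply, by viewing $\I_\sigma$ as the stabilizer for the action through~$\I_g$.

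If that subtlety causes trouble, the cleaner fallback is this. Because the answer is independent of~$h$, it suffices to exhibit some preimage~$h$ fixing~$\sigma$ and centralizing~$\I_\sigma$ modulo inner automorphisms. Inner automorphisms act trivially on group homology, so $h$ only needs to differ from an element centralizing~$\I_\sigma$ by an element of~$\I_\sigma$. Conjugation by an element of~$\I_\sigma$ is inner and therefore acts trivially on~$H_q(\I_\sigma)$. This gives the triviality of~$t_x$ on each summand of~$E^1_{p,q}$, and hence on the whole spectral sequence.
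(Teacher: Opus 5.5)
For $\I=\I_g$ your argument is correct and is the paper's. The paper fixes one curve $c$ and chooses every orbit representative $\sigma\in\fX_p$ to contain $c$, while you take $h=T_{c_0}$ separately for each $\sigma$; by the independence-of-$h$ remark these amount to the same thing.

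The gap is in the punctured and bordered cases, and it cannot be closed. There $\I$ acts on $\CC_x(\Sigma_g)$ through $\I\twoheadrightarrow\I_g$. So every stabilizer $\I_\sigma$ contains the whole kernel $K$ of this map: the point-pushing subgroup $\pi_1(\Sigma_g)\le\I_{g,1}$, respectively $\pi_1$ of the unit tangent bundle in $\I_g^1$. A lifted twist does not centralize $K$: $T_{\tilde c_0}\,\mathrm{Push}(\gamma)\,T_{\tilde c_0}^{-1}=\mathrm{Push}\bigl(T_{\tilde c_0}(\gamma)\bigr)$. Your three remedies do not fix this.
Your first remedy only gives $fT_{\tilde c_0}f^{-1}\in T_{\tilde c_0}K$, which does not make the induced map on homology trivial. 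The second just restates the definition of $\I_\sigma$. The fallback is circular: any two preimages of $t_x$ that fix $\sigma$ differ by an element of $\I_\sigma$, so they induce the same automorphism of $H_*(\I_\sigma;\Z)$. A preimage of the kind you want therefore exists only if that automorphism is already trivial.

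That automorphism is not trivial. Restrict the $\Mod_{g,1}$-equivariant Johnson homomorphism $\tau\colon\I_{g,1}\to\bigwedge^3H$ to $\I_\sigma$; it factors through $H_1(\I_\sigma;\Z)$. We have $\tau(\mathrm{Push}(\gamma))=\pm[\gamma]\wedge\omega$, where $\omega\in\bigwedge^2H$ is the symplectic form. Therefore $\tau\bigl(T_{\tilde c_0}\mathrm{Push}(\gamma)T_{\tilde c_0}^{-1}\bigr)-\tau\bigl(\mathrm{Push}(\gamma)\bigr)=\pm(x,[\gamma])\,x\wedge\omega$. This is nonzero whenever $(x,[\gamma])\ne0$, because $y\mapsto y\wedge\omega$ is injective for $g\ge2$.

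So $t_x$ acts nontrivially on the summand $H_1(\I_\sigma;\Z)$ of $E^1_{p,1}$ for every $\sigma$. It even acts nontrivially on $E^\infty_{0,1}$, as one sees by composing with $H_1^{\I}(\CC_x(\Sigma_g))\to H_1(\I)\xrightarrow{\tau}\bigwedge^3H$. Pulling back along $\I_g^1\twoheadrightarrow\I_{g,1}$ gives the same conclusion for $\I_g^1$.

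The paper's own proof asserts that $T_c$ commutes with all of $\I_\sigma$ without distinguishing the three cases, and this holds only for $\I_g$. So your suspicion was justified. The statement is true for $\I_g$ by your (and the paper's) argument, but false as written for $\I_{g,1}$ and $\I_g^1$. In those cases unipotence has to be obtained by other means, for instance from the closed case via the Birman exact sequence and the central extension by the boundary twist, possibly with a larger index.
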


\begin{proof}
It suffices to prove that the action is trivial on the $E^1$ page. Fix a simple closed curve~$c$ that represents the homology class~$x$. Let $T_c$ be the right Dehn twist about~$c$. Then $T_c\in\CG$ and $\varphi(T_c)=t_x$. Since $\I$ acts transitively on simple closed curves in homology class~$x$, we can choose the set of representatives~$\fX_p$ for $\I$-orbits of $p$-simplices of~$\CC_x(\Sigma_g)$ so as to obtain that $c\in \sigma$ for all $\sigma\in\fX_p$. Then $T_c(\sigma)=\sigma$. Hence by Proposition~\ref{propos_spec_seq_aut} the element~$t_x$ acts on each direct summand $H_q(\I_{\sigma};\Z)$ of~$E^1_{p,q}$ via the automorphism induced by conjugation by~$T_c$. However, this automorphism is trivial, since $T_c$ commutes with all elements of the stabilizer~$\I_{\sigma}$.
\end{proof}

We now proceed with the proof of Theorem~\ref{thm_annih}. Since the action of~$t_x$ on the~$E^{\infty}$ page is the associated graded of the action of~$t_x$ on filtration~\eqref{eq_filtration2}, we obtain that
$$(t_x-1)\CF_{p,q}\subseteq\CF_{p-1,q+1}$$
for all~$p$ and~$q$. Therefore,
$$
(t_x-1)^{k+1}H_k^{\I}(\CC_x(\Sigma_g);\Z)=0\qquad \text{for all }k.
$$
Finally, since by the above mentioned result of Minahan the complex~$\CC_x(\Sigma_g)$ is $(g-3)$-acyclic, we see that the natural homomorphism
$$
H_k^{\I}\bigl(\CC_x(\Sigma_g);\Z\bigr)\to H_k\bigl(\I;\Z\bigr)
$$
is an isomorphism for $k\le g-3$ and a surjection for $k=g-2$. Consequently,
$$
(t_x-1)^{k+1}H_k\bigl(\I;\Z\bigr)=0\qquad \text{for }k\le g-2.
$$
This completes the proof of Theorem~\ref{thm_annih}.

\section{Universal $\CT$-unipotent modules and groups of finite width}
\label{section_U}

Let $\Gamma$ be a group and $\CT\subseteq\Gamma$ a conjugation-invariant subset.
The following is a generalization of Definition~\ref{defin_transv_unip}.

 \begin{defin}\label{defin_T_unip}
 A $\Gamma$-module~$M$ will be called \textit{$\CT$-unipotent} if there exists a positive integer~$n$ such that $(\tau-1)^nM=0$ for all $\tau\in\CT$. The smallest $n$ with this property will be called the \textit{unipotency index} of~$M$.
\end{defin}

We need the following construction of a \textit{universal $\CT$-unipotent module} of unipotency index~$n$. Fix a number~$n$ and consider the following two-sided ideal of the group ring~$\Z[\Gamma]$:
$$
J_n=\sum_{\tau\in\CT}(\tau-1)^{n}\Z[\Gamma]=\sum_{\tau\in\CT}\Z[\Gamma](\tau-1)^{n}.
$$
We set
\begin{equation}\label{eq_Un}
\CU_n=\Z[\Gamma]/J_n.
\end{equation}
Multiplication from the left by elements of~$\Gamma$ endows~$\CU_n$ with the structure of a left $\CT$-unipotent $\Gamma$-module. (Note that $\CU_n$ has also a structure of a right $\Gamma$-module but this structure will not be interesting to us.)

The module $\CU_n$ will be referred to as the \textit{universal $\CT$-unipotent module} of unipotency index~$n$. The image of the element $1\in\Z[\Gamma]$ in~$\CU_n$ will be denoted by~$u$ and called the \textit{fundamental generator} of~$\CU_n$. Of course, the ideal~$J_n$ and the module~$\CU_n$ depend on the set~$\CT$, but we prefer to suppress this dependence from the notation. The obvious universality property for~$\CU_n$ is as follows.

\begin{propos}\label{propos_universal}
 Let $M$ be a $\CT$-unipotent $\Gamma$-module of unipotency index~$\le n$. Then for each $z\in M$ there is a unique  homomorphism of (left) $\Gamma$-modules $\psi_z\colon \CU_n\to M$ with $\psi_z(u)=z$. Moreover, the map $z\mapsto\psi_z$ gives a well-defined isomorphism of abelian groups $$\Psi\colon M\stackrel{\cong}{\longrightarrow}\Hom_{\Z[\Gamma]}(\CU_n,M).$$
\end{propos}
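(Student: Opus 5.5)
The plan is the standard argument for cyclic modules: identify $\Hom_{\Z[\Gamma]}(\CU_n,M)$ with the elements of~$M$ killed by the right annihilator of~$u$. First, a $\Gamma$-module homomorphism $\psi\colon\CU_n\to M$ is determined by $\psi(u)$, because $\CU_n$ is generated as a left $\Z[\Gamma]$-module by~$u$: every element has the form $a\cdot u$ with $a\in\Z[\Gamma]$, and then necessarily $\psi(a\cdot u)=a\psi(u)$. This settles uniqueness.

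For existence, fix $z\in M$. The map $\tilde\psi_z\colon\Z[\Gamma]\to M$, $a\mapsto az$, is a homomorphism of left $\Z[\Gamma]$-modules. We check that it vanishes on~$J_n$. Using the description $J_n=\sum_{\tau\in\CT}\Z[\Gamma](\tau-1)^n$, a general element of~$J_n$ is a finite sum $\sum_i b_i(\tau_i-1)^n$ with $b_i\in\Z[\Gamma]$ and $\tau_i\in\CT$. Its image is $\sum_i b_i(\tau_i-1)^nz$. Each term vanishes, since $(\tau-1)^nM=0$ for all $\tau\in\CT$: the unipotency index is $\le n$, and $(\tau-1)^m M=0$ implies $(\tau-1)^n M=0$ for $n\ge m$. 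So $\tilde\psi_z$ factors through $\CU_n=\Z[\Gamma]/J_n$, giving $\psi_z$ with $\psi_z(u)=z$.

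The one point needing justification is the equality of the two descriptions of~$J_n$ as a sum of right ideals and as a sum of left ideals. This is where conjugation invariance of~$\CT$ enters. For $\gamma\in\Gamma$,
\[
\gamma(\tau-1)^n=(\gamma\tau\gamma^{-1}-1)^n\gamma,
\]
and $\gamma\tau\gamma^{-1}\in\CT$, so the left ideal and the right ideal coincide. Hence $J_n$ is two-sided and the quotient is well defined. If one prefers to use only the right-ideal description, the same conjugation identity rewrites each generator into the left form above.

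Finally, $\Psi\colon z\mapsto\psi_z$ is additive, since $\psi_{z+z'}$ and $\psi_z+\psi_{z'}$ agree on~$u$ and so coincide by uniqueness. Its inverse is $\psi\mapsto\psi(u)$: injectivity follows from $\psi_z(u)=z$, and surjectivity from the uniqueness shown in the first paragraph. There is no real obstacle here; the only care needed is the two-sidedness of~$J_n$ via conjugation invariance.
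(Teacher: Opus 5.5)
Your proof is correct and is the standard cyclic-module argument the paper has in mind when it calls this the ``obvious'' universality property; the paper gives no further proof. One small remark: the two-sidedness of $J_n$ is not actually needed. For any $b,c\in\Z[\Gamma]$ and $\tau\in\CT$ you have $b(\tau-1)^n c\,z=b\,(\tau-1)^n(cz)=0$, because $(\tau-1)^n$ annihilates every element of $M$, so $a\mapsto az$ kills the whole two-sided ideal generated by the $(\tau-1)^n$ whichever description of $J_n$ you use.
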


Now, assume that $\Gamma$ is a group of finite width, see Definition~\ref{defin_fw}.
We need the following easy proposition.

\begin{propos}\label{propos_U_fg}
 Let $\Gamma$ be a group of finite width, $\Gamma=\langle\gamma_1\rangle\cdots \langle\gamma_m\rangle$, let $\CT\subseteq\Gamma$ be a conjugacy-invariant  subset that contains all elements $\gamma_1,\ldots,\gamma_m$, and let\/ $\CU_n$ be the corresponding universal $\CT$-unipotent $\Gamma$-module of unipotency index~$n$. Then\/ $\CU_n$ is a finitely generated abelian group. More precisely, $\CU_n$ is generated as an abelian group by the $n^m$ elements $\gamma_1^{l_1}\cdots\gamma_m^{l_m}u$ with $0\le l_i<n$.
\end{propos}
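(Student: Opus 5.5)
Since $\CU_n$ is a cyclic $\Gamma$-module generated by the fundamental generator~$u$, it is generated as an abelian group by the elements $\gamma u$ with $\gamma\in\Gamma$. By the finite width assumption, every $\gamma\in\Gamma$ can be written as $\gamma=\gamma_1^{k_1}\cdots\gamma_m^{k_m}$ with $k_i\in\Z$. It therefore suffices to show that each element $\gamma_1^{k_1}\cdots\gamma_m^{k_m}u$ lies in the subgroup
$$
A=\sum_{0\le l_1,\ldots,l_m<n}\Z\,\gamma_1^{l_1}\cdots\gamma_m^{l_m}u.
$$

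The key algebraic fact is elementary. Let $\tau\in\CT$, so that $(\tau-1)^n$ annihilates $\CU_n$. Then for every $k\in\Z$ the power $\tau^k$ acts on $\CU_n$ as a $\Z$-linear combination of $1,\tau,\ldots,\tau^{n-1}$.
\begin{itemize}
\item For $k\ge0$, divide the polynomial $X^k$ by the monic polynomial $(X-1)^n$ in $\Z[X]$; the remainder has degree $<n$.
\item For $k<0$, note that $\tau$ acts invertibly. Since $(\tau-1)^n=0$ on $\CU_n$, and the constant term of $(X-1)^n$ is $\pm1$, we can solve for $\tau^{-1}$ as an integer polynomial in $\tau$. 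Then reduce as before.
\end{itemize}
Hence for each $\gamma_i$ and each $k\in\Z$ there are integers $a^{(i,k)}_l$, $0\le l<n$, such that $\gamma_i^{k}=\sum_{l=0}^{n-1}a^{(i,k)}_l\gamma_i^l$ as operators on $\CU_n$. This holds because $\gamma_i\in\CT$.

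One subtlety concerns the order of application. We want to rewrite $\gamma_1^{k_1}\cdots\gamma_m^{k_m}u$, and the identity above holds for the action on the whole module $\CU_n$, not just on $u$. So we peel off factors from the right, working by induction on~$j$ from $m$ down to~$1$. Write
$$
\gamma_1^{k_1}\cdots\gamma_m^{k_m}u=\gamma_1^{k_1}\cdots\gamma_{m-1}^{k_{m-1}}\bigl(\gamma_m^{k_m}u\bigr).
$$
Replace $\gamma_m^{k_m}$ by $\sum_l a_l\gamma_m^l$; this is legitimate since it is an identity of operators on $\CU_n$ applied to~$u$. We get a $\Z$-combination of the elements $\gamma_1^{k_1}\cdots\gamma_{m-1}^{k_{m-1}}\gamma_m^{l_m}u$ with $0\le l_m<n$.

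Next, apply the same identity to $\gamma_{m-1}^{k_{m-1}}$, acting on the vector $\gamma_m^{l_m}u\in\CU_n$. This is allowed because the relation for $\gamma_{m-1}$ is an operator identity on all of $\CU_n$. Continuing leftwards, we reach a $\Z$-combination of $\gamma_1^{l_1}\cdots\gamma_m^{l_m}u$ with $0\le l_i<n$, so the element lies in~$A$.

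There is essentially no obstacle here. The only points to check are that the annihilation $(\tau-1)^n\CU_n=0$ holds on the whole module, which it does since $J_n$ is a two-sided ideal, and that negative powers reduce correctly via invertibility. This gives at most $n^m$ generators, so $\CU_n$ is a finitely generated abelian group.
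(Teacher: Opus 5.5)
Your proof is correct and follows essentially the same argument as the paper: write each $\gamma\in\Gamma$ as $\gamma_1^{k_1}\cdots\gamma_m^{k_m}$, and use $(\gamma_i-1)^n\CU_n=0$ to reduce each exponent $k_i$ to the range $0\le l_i<n$, one factor at a time. The difference is cosmetic. The paper peels from the left and rewrites the inner factors via conjugates such as $\gamma_1^{l_1}\gamma_2\gamma_1^{-l_1}\in\CT$. You peel from the right and use that the reduction is an operator identity on all of $\CU_n$, which makes the conjugation step unnecessary.
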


\begin{proof}
For each $\tau\in\CT$ and each element $v\in \CU_n$, we have that $(\tau-1)^nv=0$. It follows easily that any element $\tau^kv$ with $k\in\Z$ can be expressed as a $\Z$-linear combination of the elements $v,\tau v,\ldots,\tau^{n-1}v$.

Each element of~$\Gamma$ can be written as $\gamma=\gamma_1^{k_1}\cdots\gamma_m^{k_m}$.
Taking $\tau=\gamma_1$ and $v=\gamma_2^{k_2}\cdots\gamma_m^{k_m}u$, we can write $\gamma u$ as a $\Z$-linear combination of elements of the form $\gamma_1^{l_1}\gamma_2^{k_2}\cdots\gamma_m^{k_m}u$ with $0\le l_1<n$. Then, using
$$
\gamma_1^{l_1}\gamma_2^{k_2}\cdots\gamma_m^{k_m}u=\left(\gamma_1^{l_1}\gamma_2\gamma_1^{-l_1}\right)^{k_2}\gamma_1^{l_1}\gamma_3^{k_3}\cdots\gamma_m^{k_m}u
$$
and setting $\tau=\gamma_1^{l_1}\gamma_2\gamma_1^{-l_1}$ and $v=\gamma_1^{l_1}\gamma_3^{k_3}\cdots\gamma_m^{k_m}u$, we can further write $\gamma u$ as a $\Z$-linear combination of elements of the form $\gamma_1^{l_1}\gamma_2^{l_2}\gamma_3^{k_3}\cdots\gamma_m^{k_m}u$ with $0\le l_1,l_2<n$. Proceeding inductively, we eventually express $\gamma u$ as a $\Z$-linear combination of elements $\gamma_1^{l_1}\cdots\gamma_m^{l_m}u$ where $0\le l_i<n$ for each~$i$. This proves the proposition.
\end{proof}

\section{Bounded generation of~$\Sp_{2g}(\Z)$ and proof of Theorem~\ref{thm_main} for $k=2$}
\label{section_bounded}

In this section we first explain how the following proposition can be extracted from the paper~\cite{Tav91} by Tavgen, and then prove Theorem~\ref{thm_main} for $k=2$. For all definitions and facts concerning Chevalley groups, see~\cite{Ste67}.

\begin{propos}[= Proposition~\ref{propos_fw}]\label{propos_Tav}
 If $g\ge 2$, then there exists a finite sequence of (not necessarily distinct) elements $x_1,\ldots,x_m\in\Z^{2g}$ such that the group~$\Sp_{2g}(\Z)$ has finite width with respect to the corresponding sequence of symplectic transvections: $\Sp_{2g}(\Z)=\langle t_{x_1}\rangle\cdots \langle t_{x_m}\rangle$.
\end{propos}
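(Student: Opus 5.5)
Tavgen's theorem states that for a Chevalley group $G(\Phi,\mathcal O)$ of rank $\ge2$ over the ring of integers of a number field, $G$ is boundedly generated by its elementary root subgroups $X_\alpha=\{x_\alpha(t)\}$. Concretely, there are roots $\alpha_1,\dots,\alpha_N\in\Phi$ (repetitions allowed) with
$$
G(\Phi,\mathcal O)=X_{\alpha_1}(\mathcal O)\cdots X_{\alpha_N}(\mathcal O).
$$
The plan is to apply this with $\Phi=C_g$ and $\mathcal O=\Z$, where $g\ge2$ so the rank is $\ge2$. We use that the simply connected Chevalley group of type $C_g$ over $\Z$ is $\Sp_{2g}(\Z)$. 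For $\mathcal O=\Z$ each root subgroup is $X_\alpha(\Z)=\{x_\alpha(t):t\in\Z\}=\langle x_\alpha(1)\rangle$, which is infinite cyclic. The whole task is then to identify each generator $x_\alpha(1)$ with a power of a symplectic transvection $t_x$. Tavgen's statement may be phrased with unipotent radicals or Borel/opposite unipotent subgroups rather than individual root subgroups. In that case I would first decompose: each $U^\pm(\Z)$ is a product of its root subgroups taken in a fixed order (Chevalley commutator formula / Steinberg), so a bounded product of $U^\pm(\Z)$'s is a bounded product of $X_\alpha(\Z)$'s.

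Next I will check, in a standard symplectic basis $e_1,\dots,e_g,f_1,\dots,f_g$ with $(e_i,f_j)=\delta_{ij}$, what the root elements are. Long roots $\pm2\varepsilon_i$ give $x(t)\colon f_i\mapsto f_i+te_i$ (or the reverse). This is exactly $t_{e_i}^{\pm t}$, since $t_{e_i}y=y+(e_i,y)e_i$. So these are powers of the transvection $t_{e_i}$ (resp.\ $t_{f_i}$), with the sign adjusted by taking $t^{-1}$, which is harmless since we take the whole cyclic group.

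Short roots $\varepsilon_i-\varepsilon_j$ and $\pm(\varepsilon_i+\varepsilon_j)$ give elements that are not transvections but rank-$2$ unipotents. The key step is to write each short-root element as a bounded product of transvection groups uniformly in $t$. For $\varepsilon_i+\varepsilon_j$, the element $x(t)$ sends $f_i\mapsto f_i+te_j$ and $f_j\mapsto f_j+te_i$. I would verify the identity
$$
x_{\varepsilon_i+\varepsilon_j}(t)=t_{e_i+e_j}^{\,t}\,t_{e_i}^{-t}\,t_{e_j}^{-t}.
$$
This holds because the transvections along vectors in the isotropic span $\langle e_i,e_j\rangle$ commute and add: $t_x^{s}$ with $x$ isotropic-span corresponds to the quadratic form $s\,x\otimes x$, and $(e_i+e_j)^2-e_i^2-e_j^2=2e_ie_j$ up to the normalization. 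The same device, conjugated by a Weyl element, handles $\varepsilon_i-\varepsilon_j$, using the isotropic span $\langle e_i,f_j\rangle$. Hence
$$
X_\alpha(\Z)\subseteq\langle t_{e_i+e_j}\rangle\langle t_{e_i}\rangle\langle t_{e_j}\rangle,
$$
where the exponents are tied together, but containment in the product of three cyclic groups is all we need. Substituting these into Tavgen's bounded product gives $\Sp_{2g}(\Z)=\langle t_{x_1}\rangle\cdots\langle t_{x_m}\rangle$.

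The main obstacle is bookkeeping rather than ideas. The first part is matching Tavgen's precise formulation, which may concern $E(\Phi,\mathcal O)$ or the adjoint form, with $\Sp_{2g}(\Z)$. Here I would use that $E(C_g,\Z)=\Sp_{2g}(\Z)$ (elementary generation over a Euclidean ring). The second part is getting the normalization of the short-root identity right: the factor $2$ versus $1$ depends on the Chevalley basis convention. If the identity produces $x_\alpha(2t)$ only, I would instead use the mixed vector $e_i+f_j$ or rescale appropriately, and verify directly by computing on basis vectors.
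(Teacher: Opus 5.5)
Your proposal is correct and follows the paper's route. You invoke Tavgen's bounded generation by root subgroups and note that long-root elements are symplectic transvections. For short-root elements you use the identity $t_{x+y}^{\xi}t_x^{-\xi}t_y^{-\xi}$ for pairwise commuting transvections along an isotropic pair $x,y$, giving $\langle t_\alpha\rangle\subseteq\langle t_{x+y}\rangle\langle t_x\rangle\langle t_y\rangle$. Your explicit computation, applying $y\mapsto y+t\bigl[(e_i,y)e_j+(e_j,y)e_i\bigr]$ to $f_i$ and $f_j$, already confirms the coefficient is $1$, so the factor-of-$2$ normalization concern does not arise.
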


A group $\Gamma$ is said to be \textit{boundedly generated} by a subset~$A\subseteq \Gamma$ if there exists a constant~$K$ such that every element $\gamma\in \Gamma$ can be written as $\gamma=a_1\cdots a_k$ with $a_i\in A$ and $k\le K$. Carter and Keller~\cite{CaKe83} proved that the group~$\SL_n(\Z)$ is boundedly generated by elementary matrices, provided that $n\ge 3$. Tavgen~\cite[Theorem~A]{Tav91} then generalized this result to all Chevalley groups~$G(\Phi,\Z)$, where $\Phi$ is a reduced irreducible root system of rank~$\ge 2$. Namely, he proved that $G(\Phi,\Z)$ is boundedly generated by the set of all \textit{root unipotents} (also called \textit{root elements}) $t_{\alpha}^{\xi}=\exp(\xi X_{\alpha})$, where $\alpha\in\Phi$,  $\xi\in\Z$, and $X_{\alpha}$ is the element  corresponding to~$\alpha$ of the Chevalley basis for the Lie algebra~$\mathfrak{g}(\Phi)$. In particular, this result holds for all groups $\Sp_{2g}(\Z)=G(C_g,\Z)$ with $g\ge 2$. (A somewhat  weaker result for~$\Sp_{2g}(\Z)$ with $g\ge 3$ and a larger set of generators was earlier obtained by Zakiryanov~\cite{Zak85}.)

Since $\Phi$ is a finite set, Tavgen's result immediately implies that
$\Sp_{2g}(\Z)=\langle t_{\alpha_1}\rangle\cdots \langle t_{\alpha_m}\rangle$ for some finite sequence of (not necessarily distinct) root unipotents $t_{\alpha_i}=\exp(X_{\alpha_i})$. Indeed, one can take for $\alpha_1,\ldots,\alpha_m$ any finite sequence of roots that contain as subsequences all possible sequences of roots of length~$K$. For the following standard facts, see, for instance,~\cite[Sect.~14]{StVa00}:
\begin{itemize}
 \item If $\alpha$ is a long root, then $t_{\alpha}$ is a symplectic transvection~$t_x$ for some primitive $x\in\Z^{2g}$.
 \item If $\alpha$ is a short root, then~$t_{\alpha}$ is a so-called \textit{Eichler--Siegel--Dickson transvection}, i.\,e., a transformation of the form
 $$
 t_{x,y}z=z+(x,z)y+(y,z)x,
 $$
 where $x$ and~$y$ are primitive vectors in~$\Z^{2g}$ with $(x,y)=0$. But
 $$
 t_{x,y}=t_{x+y}t_x^{-1}t_y^{-1}
 $$
 and the symplectic transvections~$t_{x+y}$, $t_x$, and~$t_y$ pairwise commute. Hence,
 $$
 \langle t_{\alpha}\rangle \subseteq \langle t_{x+y}\rangle\langle t_{x}\rangle\langle t_{y}\rangle.
 $$
 \end{itemize}
Proposition~\ref{propos_Tav} follows.

\begin{proof}[Proof of Theorem~\ref{thm_main} for $k=2$]
Set~$\Gamma=\Sp_{2g}(\Z)$, and let $\CT\subseteq\Sp_{2g}(\Z)$ be the subset consisting of all symplectic transvections~$t_x$. Consider the universal $\CT$-unipotent $\Sp_{2g}(\Z)$-module~$\CU_3$ of unipotency index~$3$. We have $g\ge k+2=4$. So by Proposition~\ref{propos_Tav} the group $\Sp_{2g}(\Z)$ has finite width with respect to a sequence of elements from~$\CT$. Then by Proposition~\ref{propos_U_fg} we obtain that $\CU_3$ is a finitely generated abelian group. By Theorem~\ref{thm_annih} the $\Sp_{2g}(\Z)$-module $H_2(\I_{g,p}^b;\Z)$ is $\CT$-unipotent of unipotency index~$\le3$. Hence by Proposition~\ref{propos_universal} for each homology class $z\in H_2(\I_{g,p}^b;\Z)$ there is a homomorphism of $\Sp_{2g}(\Z)$-modules $\psi_z\colon\CU_3\to H_2(\I_{g,p}^b;\Z)$ with $\psi_z(u)=z$. Now, by a theorem of Kassabov and Putman~\cite{KaPu20} the group~$H_2(\I_{g,p}^b;\Z)$ is generated as an $\Sp_{2g}(\Z)$-module by a finite number of elements $z_1,\ldots,z_s$. It follows that $H_2(\I_{g,p}^b;\Z)$ is generated as an abelian group by the images of the homomorphisms $\psi_{z_1},\ldots,\psi_{z_s}$. Consequently, it is a finitely generated abelian group.
\end{proof}

\section{Finite generation of the coinvariants}\label{section_coinv}

The aim of the present section is to prove the following proposition. We fix $g$, $b$, and~$p$ with $b+p\le 1$.

\begin{propos}\label{propos_fg_coinv}
 Assume that the groups $H_i\bigl(\I_{g,p}^b;\Z\bigr)$ are finitely generated for $i<k$. Then for any finite-index subgroup $\Lambda\le\Sp_{2g}(\Z)$ and any $\Lambda$-module~$A$ that is finitely generated as an abelian group, the coinvariant group $\left(H_k\bigl(\I_{g,p}^b;\Z\bigr)\otimes A\right)_{\Lambda}$ is finitely generated.
\end{propos}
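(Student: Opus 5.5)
We will use the Lyndon--Hochschild--Serre spectral sequence, twisted by $A$, for the extension restricted to $\Lambda$. Let $\Mod_\Lambda=\varphi^{-1}(\Lambda)\le\Mod_{g,p}^b$; it is a finite-index subgroup containing $\I=\I_{g,p}^b$, and it sits in the extension $1\to\I\to\Mod_\Lambda\to\Lambda\to1$. We regard $A$ as a $\Mod_\Lambda$-module through $\varphi$, so that $\I$ acts trivially on it. The spectral sequence is
$$
E^2_{s,t}=H_s\bigl(\Lambda;H_t(\I;A)\bigr)\ \Longrightarrow\ H_{s+t}(\Mod_\Lambda;A).
$$
Since $\I$ acts trivially on $A$ and $A$ is finitely generated as an abelian group, the universal coefficient theorem gives
$$
H_t(\I;A)\cong \bigl(H_t(\I;\Z)\otimes A\bigr)\oplus \Tor\bigl(H_{t-1}(\I;\Z),A\bigr).
$$
This splitting is not natural, but the short exact sequence it comes from is $\Lambda$-equivariant. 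The target of the proposition is then the corner term $E^2_{0,k}=H_0(\Lambda;H_k(\I;A))$. Its quotient coming from the $\otimes$-part is exactly $\bigl(H_k(\I;\Z)\otimes A\bigr)_\Lambda$, because taking coinvariants is right exact.

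The steps are as follows.

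First, $H_n(\Mod_\Lambda;A)$ is finitely generated for all $n$. This holds because $\Mod_\Lambda$ has finite index in $\Mod_{g,p}^b$, and Ivanov's result gives finite generation of homology with coefficients in modules that are finitely generated over $\Z$. Alternatively, Shapiro's lemma identifies $H_n(\Mod_\Lambda;A)$ with $H_n\bigl(\Mod_{g,p}^b;\mathrm{Ind}\,A\bigr)$, and $\mathrm{Ind}\,A$ is still finitely generated over $\Z$.

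Second, for $t<k$ the coefficient module $H_t(\I;A)$ is a finitely generated abelian group, by the inductive hypothesis together with the universal coefficient formula above. Raghunathan's theorem, applied to the arithmetic group $\Lambda$, then shows that every term $E^2_{s,t}$ with $t<k$ is finitely generated.

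Third, the term $E^\infty_{0,k}$ is a subquotient, in fact a quotient of a subgroup, of $H_k(\Mod_\Lambda;A)$, so it is finitely generated. On the other hand, $E^\infty_{0,k}$ is obtained from $E^2_{0,k}$ by quotienting successively by the images of the differentials $d^r\colon E^r_{r,k-r+1}\to E^r_{0,k}$ for $r\ge2$. Each source $E^r_{r,k-r+1}$ is a subquotient of $E^2_{r,k-r+1}$ with $k-r+1<k$, hence finitely generated. Thus $E^2_{0,k}$ is an iterated extension of finitely generated groups, so it is finitely generated. Finally, $\bigl(H_k(\I;\Z)\otimes A\bigr)_\Lambda$ is a quotient of $E^2_{0,k}$ and is therefore finitely generated.

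The main obstacle is the bookkeeping in the second step. One must make sure that Raghunathan's finiteness applies to arbitrary finite-index subgroups $\Lambda$ and to $\Z$-finitely generated modules that need not be free over $\Z$. This is fine, since $\Lambda$ is of type $\mathrm{FP}_\infty$ (it has a finite-index torsion-free subgroup with a compact Borel--Serre model), and homology of such a group with finitely generated coefficients is finitely generated. The other point to check is that the $E^2_{0,k}$ surjection onto the coinvariants uses only right exactness of $(\cdot)_\Lambda$ and the $\Lambda$-equivariant surjection $H_k(\I;A)\twoheadrightarrow H_k(\I;\Z)\otimes A$ obtained by writing the universal coefficient sequence in the form $0\to H_k(\I;\Z)\otimes A\to H_k(\I;A)\to\Tor(H_{k-1}(\I;\Z),A)\to0$. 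Here the map actually goes the other way: $H_k(\I;\Z)\otimes A$ is a submodule, not a quotient. So I would instead argue as follows. The coinvariants of the submodule map to $E^2_{0,k}$ with kernel equal to the image of $H_1\bigl(\Lambda;\Tor(H_{k-1}(\I;\Z),A)\bigr)$, which is finitely generated by the second step. Hence $\bigl(H_k(\I;\Z)\otimes A\bigr)_\Lambda$ is an extension of a subgroup of $E^2_{0,k}$ by a finitely generated group, and so it is finitely generated.
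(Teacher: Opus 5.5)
Your proposal is correct and follows the paper's proof essentially step for step. You use the Lyndon--Hochschild--Serre spectral sequence for $1\to\I\to\varphi^{-1}(\Lambda)\to\Lambda\to1$, together with Raghunathan's and Ivanov's finiteness results, to get finite generation of $E^2_{0,k}=H_k(\I;A)_\Lambda$; then the long exact sequence in $\Lambda$-homology of the universal coefficient sequence, with the finitely generated term $H_1\bigl(\Lambda;\Tor(H_{k-1}(\I;\Z),A)\bigr)$, passes this to $\bigl(H_k(\I;\Z)\otimes A\bigr)_\Lambda$. Delete the earlier claims, in the setup and at the end of your third step, that these coinvariants are a quotient of $E^2_{0,k}$: as you noticed, $H_k(\I;\Z)\otimes A$ is a submodule, and only your corrected final argument is valid.
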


Recall that a group~$G$ is of \textit{type~FL} if $\Z$ has a finite free resolution over~$\Z [G]$, and $G$ is of \textit{type~VFL} if $G$ has a finite-index subgroup of type~FL. If $G$ is of type~VFL and $A$ is a $G$-module that is finitely generated as an abelian group, then $H_k(G;A)$ is finitely generated for every~$k$. Moreover, if $G$ is a group of type~VFL, then any finite-index subgroup of~$G$ is also of type~VFL. These and other basic facts on groups of types~FL and~VFL can be found in~\cite{Ser71}.

We will need the following two classical results.

\begin{theorem}[Raghunathan,~\cite{Rag68}]\label{thm_Rag}
Let $\Gamma$ be an arithmetic subgroup of a connected semi-simple algebraic group defined over~$\Q$. Then $\Gamma$ is of type~VFL.
\end{theorem}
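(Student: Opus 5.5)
This is a classical result that the paper only cites, so I would not reprove it from scratch. Instead I would follow the standard route through reduction theory, in three steps: pass to a torsion-free subgroup, build a compact Eilenberg--MacLane space for it, and read off a finite free resolution.

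\textbf{Step 1: reduce to a torsion-free subgroup.} Since $\Gamma$ is commensurable with $\bG(\Z)\le\GL_n(\Z)$, Selberg's lemma (or Minkowski's argument with congruence subgroups) gives a torsion-free normal subgroup $\Gamma'\le\Gamma$ of finite index. For instance, take $\Gamma'=\Gamma\cap\ker\bigl(\GL_n(\Z)\to\GL_n(\Z/3\Z)\bigr)$, intersected with $\bG(\Z)$ if necessary. Type~VFL only asks for \emph{some} finite-index subgroup of type~FL. So it suffices to show that $\Gamma'$ is of type~FL, i.e.\ that $B\Gamma'$ has a model which is a finite CW complex.

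\textbf{Step 2: a compact model for $B\Gamma'$.} Let $X=\bG(\mathbb{R})/K$ be the associated symmetric space, where $K$ is a maximal compact subgroup. It is diffeomorphic to a Euclidean space, hence contractible. The group $\Gamma'$ is discrete and torsion-free, so it acts on $X$ freely and properly discontinuously, and $\Gamma'\backslash X$ is a manifold model of $B\Gamma'$. This manifold is in general non-compact. The key step is to show that it is homotopy equivalent to a compact manifold with boundary. I would do this in one of two ways:
\begin{itemize}
\item Borel--Serre: attach boundary faces $e(\mathbf{P})$ to $X$, one for each rational parabolic subgroup $\mathbf{P}$. This produces a manifold with corners $\overline{X}$ which is still contractible and on which $\Gamma'$ acts freely with \emph{compact} quotient.
\item Raghunathan's original argument: construct a $\Gamma'$-invariant proper function on $X$ which, outside a compact set modulo $\Gamma'$, has no critical points. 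Its sublevel set is then a compact $\Gamma'$-cocompact submanifold onto which $X$ deformation retracts equivariantly.
\end{itemize}
Either way we get a compact smooth manifold with boundary (or with corners) which is a $K(\Gamma',1)$.

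\textbf{Step 3: conclude.} A compact manifold with corners admits a finite triangulation. Lifting the triangulation to the universal cover gives a finite free $\Z[\Gamma']$-resolution of $\Z$, so $\Gamma'$ is of type~FL and hence $\Gamma$ is of type~VFL.

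\textbf{Main obstacle.} The hard step is Step 2, namely controlling the geometry at infinity of $\Gamma'\backslash X$. This rests on the reduction theory of Borel and Harish-Chandra:
\begin{itemize}
\item finiteness of the set of $\Gamma$-conjugacy classes of rational parabolic subgroups;
\item the fact that finitely many Siegel sets cover a fundamental domain;
\item the Siegel property that only finitely many $\gamma$ make $\gamma\mathfrak{S}\cap\mathfrak{S}\neq\emptyset$.
\end{itemize}
These facts are what ensure that the boundary faces glue together compatibly and that the quotient is compact. I would first check these in the model case $\SL_n(\Z)$ via Minkowski reduction, and then invoke the general theory for the semisimple group $\bG$.
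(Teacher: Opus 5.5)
The paper gives no proof of this theorem and simply cites Raghunathan. Your outline is the standard argument behind that citation and is correct: pass to a torsion-free finite-index subgroup via Minkowski's level-$3$ congruence argument (intersected with $\bG(\Z)$), build a compact $K(\Gamma',1)$ either from the Borel--Serre bordification or from Raghunathan's exhaustion function, and then triangulate it to get a finite free resolution.

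One small slip concerns the exhaustion function. A $\Gamma'$-invariant function cannot be proper on $X$ itself, so what you mean is a function proper on $\Gamma'\backslash X$. Correspondingly, its sublevel set in $X$ is $\Gamma'$-cocompact; it is compact only after passing to the quotient.
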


\begin{theorem}[Ivanov,~\protect{\cite[Sect.~6.2]{Iva87}}]\label{thm_Iva}
The mapping class groups~$\Mod_{g,p}^b$ are of type~VFL.
\end{theorem}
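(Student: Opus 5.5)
Since Theorem~\ref{thm_Iva} is a classical result quoted from~\cite{Iva87}, I would not reprove it from scratch. I would reconstruct the standard argument in three steps: pass to a torsion-free finite-index subgroup, produce a finite model for its classifying space from a compact bordification of Teichm\"uller space, and handle the boundary case by an extension argument. Throughout, type~FL will be witnessed by a finite $K(\pi,1)$, whose cellular chain complex on the universal cover gives a finite free resolution of~$\Z$.

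\emph{Step 1 (torsion-free subgroup).} Consider first the cases $\Mod_g$ and $\Mod_{g,1}$. Let $\Mod[3]$ be the kernel of the map to $\Sp_{2g}(\Z/3\Z)$. By Serre's lemma, a finite-order element acting trivially on $H_1(\Sigma_g;\Z/3\Z)$ is trivial, so $\Mod[3]$ is torsion-free; it clearly has finite index. Since $\Mod[3]$ is torsion-free, it acts freely and properly discontinuously on the Teichm\"uller space $\CT_{g,p}$, which is contractible (homeomorphic to a Euclidean space). Hence $\CT_{g,p}/\Mod[3]$ is a $K(\Mod[3],1)$, but it is a non-compact manifold.

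\emph{Step 2 (finite model), the main obstacle.} The key point is to replace $\CT_{g,p}$ by a contractible space on which $\Mod$ acts cocompactly. I would use Harvey's bordification, as in Ivanov's treatment. Choose $\varepsilon$ smaller than the Margulis constant and remove the thin part, namely the points where some simple closed curve has hyperbolic length less than~$\varepsilon$. The result is a $\Mod$-invariant submanifold with corners. One must check two things:
\begin{itemize}
\item it is a deformation retract of $\CT_{g,p}$, hence contractible;
\item its quotient by $\Mod$ is compact, by Mumford's compactness criterion.
\end{itemize}
The contractibility, which goes by flowing along gradients of length functions, is the delicate part; I would cite Ivanov or Harer for it rather than redo it. Once this is in place, the quotient by $\Mod[3]$ is a compact manifold with corners. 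It therefore admits a finite triangulation, which gives a finite $K(\Mod[3],1)$, so $\Mod_g$ and $\Mod_{g,1}$ are of type~VFL. (Alternatively one can use Harer's spine, a $\Mod$-equivariant deformation retract of~$\CT_{g,p}$ with finitely many orbits of cells.)

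\emph{Step 3 (boundary case).} For $\Mod_g^1$, which is torsion-free, I would use the central extension
\[
1\to\Z\to\Mod_g^1\to\Mod_{g,1}\to 1,
\]
whose kernel is generated by the boundary twist. Take the finite-index preimage $\Lambda$ of $\Mod_{g,1}[3]$ in $\Mod_g^1$. It is an extension of the torsion-free group $\Mod_{g,1}[3]$ by $\Z$. This extension is realized by a circle bundle over the finite $K(\Mod_{g,1}[3],1)$, and the total space is a finite $K(\Lambda,1)$. Hence $\Lambda$ is of type~FL, and $\Mod_g^1$ is of type~VFL, which finishes the argument.
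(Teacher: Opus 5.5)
The paper gives no proof of this statement; it is simply quoted from Ivanov. Your sketch is the standard argument behind that citation: a torsion-free level-$3$ subgroup, a contractible $\Mod$-invariant thick part of Teichm\"uller space with compact quotient by Mumford's criterion, and the central extension by the boundary twist for $\Mod_g^1$. It is correct, modulo the inputs you explicitly defer to the literature.

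Two small precisions are worth making:
\begin{enumerate}
\item Serre's lemma by itself only places a torsion element of $\Mod[3]$ in the Torelli group. You also need that nontrivial finite-order mapping classes act nontrivially on $H_1(\Sigma_g;\Z)$, and, for $\Mod_{g,1}$, that the point-pushing kernel $\pi_1(\Sigma_g)$ is torsion-free.
\item Harer's spine is only available for punctured surfaces, so for closed $\Sigma_g$ the thick-part route is the one you must use.
\end{enumerate}
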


\begin{proof}[Proof of Proposition~\ref{propos_fg_coinv}]
 Let $\I$ be any of the three Torelli groups~$\I_g$, $\I_{g,1}$, or~$\I_g^1$, and let $\Mod$ be the corresponding mapping class group (namely $\Mod_g$, $\Mod_{g,1}$, or~$\Mod_g^1$, respectively).

Let $\CG$ be the preimage of~$\Lambda$ under the surjection $$\Mod\twoheadrightarrow\Sp_{2g}(\Z).$$ We can consider~$A$ as a $\CG$-module, with the Torelli subgroup~$\I$ acting trivially on~$A$.

First, let us prove that the coinvariant group $H_k(\I;A)_{\Lambda}$ is finitely generated. Here the structure of a $\Lambda$-module on~$H_k(\I;A\bigr)$ is as follows. The group~$\CG$ acts by conjugations on~$\I$ and acts on~$A$. So $\CG$ acts on~$H_k(\I;A)$. But the action of~$\I$ on the homology of itself is trivial, so $H_k\bigl(\I;A\bigr)$ becomes a $\Lambda$-module.

Consider the short exact sequence
$$
1\longrightarrow \I\longrightarrow\CG\longrightarrow\Lambda\longrightarrow 1
$$
and the corresponding Lyndon--Hochschild--Serre spectral sequence
$$
E^2_{p,q}=H_p\left(\Lambda,H_q(\I;A)\right)\ \Longrightarrow \  H_{p+q}(\CG;A).
$$

By the assumption made we have that the groups $H_q(\I;\Z)$ are finitely generated for $q<k$. By the universal coefficients theorem the groups $H_q(\I;A)$ are also finitely generated for $q<k$. (Note that the coefficients are untwisted, since $\I$ acts trivially on~$A$.) Using Theorem~\ref{thm_Rag}, we obtain that all groups~$E^2_{p,q}$ with $q<k$ are finitely generated.

From Theorem~\ref{thm_Iva} it follows that the group~$H_k(\CG;A)$ is finitely generated. Since $E^{\infty}_{0,k}$ is a subgroup of~$H_k(\CG;A)$, we see that $E^{\infty}_{0,k}$ is finitely generated. The group~$E^{\infty}_{0,k}$ is obtained from the group $E^2_{0,k}=H_k(\I;A)_{\Lambda}$ by taking consecutive quotients by the images of the differentials $d^2,\ldots,d^{k+1}$. But we have already proved that the groups~$E^r_{r,k-r+1}$ that are the sources of the differentials  $d^r$ landing in~$E^r_{0,k}$ are finitely generated (where $r=2,\ldots,k+1$). Therefore, the group~$H_k(\I;A)_{\Lambda}$ is finitely generated.

Second, the universal coefficients theorem provides the short exact sequence
\begin{equation}\label{eq_uct}
0\longrightarrow H_k(\I;\Z)\otimes A\longrightarrow H_k(\I;A)\longrightarrow\Tor\left(H_{k-1}(\I;\Z),A\right)\longrightarrow 0,
\end{equation}
where $\Tor=\Tor^{\Z}_1$. Moreover, by the assumption made the group~$H_{k-1}(\I;\Z)$ is finitely generated, so the group
$$
T=\Tor\left(H_{k-1}(\I;\Z),A\right)
$$
is also finitely generated. The group~$\Lambda$ acts by automorphisms of exact sequence~\eqref{eq_uct}. The corresponding long exact sequence in homology reads as
$$
\ldots \longrightarrow H_1(\Lambda; T) \longrightarrow \left(H_k(\I;\Z)\otimes A\right)_{\Lambda}
\longrightarrow H_k(\I;A)_{\Lambda}
\longrightarrow T_{\Lambda}\longrightarrow 0.
$$
From Theorem~\ref{thm_Rag} it follows that the group~$H_1(\Lambda; T)$ is finitely generated. We have already proved that the group $H_k(\I;A)_{\Lambda}$ is finitely generated. Thus, the group $\left(H_k(\I;\Z)\otimes A\right)_{\Lambda}$ is also finitely generated.
\end{proof}

\section{Quasiprojection~$q$}\label{section_quasiproj}

This section contains a representation-theoretic construction, which is then  used in Section~\ref{section_proof_main} to prove Theorem~\ref{thm_alg}. This construction will be needed for us in the case where $\Gamma$ is an arithmetic group. However, the construction itself does not rely on arithmeticity, so in this section~$\Gamma$ is an arbitrary group.

For a group $\Gamma$ and a commutative ring~$R$, we denote by $\varepsilon\colon R[\Gamma]\to R$ the augmentation homomorphism. If $V$ is an $R[\Gamma]$-module and $q\in R[\Gamma]$, then, with some abuse of notation, we denote by~$q$ also the map $V\to V$ given by $v\mapsto qv$.

\begin{lem}\label{lem_q}
Let $\Gamma$ be a group and let $V$ be a $\Z[\Gamma]$-module that is finitely generated as an abelian group. Assume that the $\Q[\Gamma]$-module $V\otimes\Q$ is semisimple. Then there exists an element $q\in\Z[\Gamma]$ such that
\begin{enumerate}
 \item $\varepsilon(q)\ne 0$,
 \item $qV\subseteq V^{\Gamma}$, i.\,e., $\gamma qv=qv$ for all $\gamma\in \Gamma$ and~$v\in V$,
 \item the homomorphism $q\colon V\to V^{\Gamma}$ is a $\Gamma$-invariant map, i.\,e., $q\gamma v=qv$ for all $\gamma\in \Gamma$ and~$v\in V$.
\end{enumerate}
\end{lem}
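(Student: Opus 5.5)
The idea is to realize, up to a nonzero integer multiple, the projection of $V\otimes\Q$ onto its trivial isotypic component by an element of the group ring. The only delicate point is making the augmentation condition~(1) hold, and there is a simple trick for that.

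\emph{Adding a trivial summand.} First replace $V$ by $V'=\Z\oplus V$, where $\Z$ carries the trivial $\Gamma$-action. Then $V'$ is still finitely generated as an abelian group, and $V'\otimes\Q=\Q\oplus(V\otimes\Q)$ is still semisimple. Any $q$ satisfying~(2) and~(3) for $V'$ also satisfies them for the summand $V$. On the trivial summand $\Z$, the element $q$ acts as multiplication by $\varepsilon(q)$. So it suffices to build $q$ for $V'$ whose action on $(V')^{\Gamma}\otimes\Q$ is multiplication by a nonzero integer $N$. Since $\Z\subseteq(V')^{\Gamma}$, this forces $\varepsilon(q)=N\ne0$.

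\emph{The projection lies in the image of $\Q[\Gamma]$.} Let $A\subseteq\mathrm{End}_{\Q}(V'\otimes\Q)$ be the image of $\Q[\Gamma]$. It is finite-dimensional, since $V'\otimes\Q$ is. Because $V'\otimes\Q$ is a faithful semisimple $A$-module, $A$ is a semisimple algebra. Its simple factors correspond to the isotypic components of $V'\otimes\Q$. Hence the projection $\pi$ onto the trivial isotypic component $(V'\otimes\Q)^{\Gamma}$, along the sum of the nontrivial isotypic components, is a central idempotent of $A$. Concretely, by Wedderburn, $A\cong\prod_i\mathrm{End}_{D_i}(W_i)$ and $\pi$ is the unit of the factor for the trivial representation.

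\emph{Clearing denominators.} Choose $p\in\Q[\Gamma]$ mapping to $\pi$, and an integer $N_0>0$ with $q_0=N_0p\in\Z[\Gamma]$. On $V'\otimes\Q$ we have $\gamma\pi=\pi=\pi\gamma$ for all $\gamma\in\Gamma$, since the image of $\pi$ is the invariants and $\pi$ kills the nontrivial components, which are $\Gamma$-stable. Therefore $(\gamma-1)q_0v$ and $q_0(\gamma-1)v$ lie in the torsion subgroup $T\subseteq V'$ for every $v\in V'$. The group $T$ is finite because $V'$ is finitely generated. Let $M$ be the exponent of $T$ and set $q=Mq_0$. Then $(\gamma-1)qv=0$ and $q(\gamma-1)v=0$ exactly, which gives~(2) and~(3). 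Finally, $q$ acts on $\Z\subseteq(V')^{\Gamma}$ as multiplication by $MN_0$, so $\varepsilon(q)=MN_0\ne0$, which is~(1).

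\emph{Main obstacle.} The step needing most care is the claim $\pi\in A$. It uses semisimplicity essentially: the isotypic projections of a semisimple module over a finite-dimensional algebra lie in that algebra, because each is the central idempotent of a simple factor. The rest is bookkeeping with torsion and denominators.
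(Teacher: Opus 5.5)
Your proof is correct, but it reaches the key point by a different route than the paper. The paper avoids the structure theory of algebras. Its Lemma~\ref{lem_q_step} shows that for a nonzero vector $w$ in a nontrivial irreducible $W$, the annihilator $\Ann(w)$ is not contained in $\ker\varepsilon$, since otherwise $\varepsilon$ would induce a surjection $W\twoheadrightarrow\Q$. An iteration $r_k\cdots r_1$, with the dimensions $\dim(r_i\cdots r_1V)$ strictly decreasing, then produces an element of nonzero augmentation that kills $W$. Multiplying such elements over the nontrivial summands $V_1,\dots,V_m$ gives $q_{\Q}$, which acts as $\varepsilon(q_{\Q})$ on $V^{\Gamma}\otimes\Q$.

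You instead pass to the finite-dimensional image $A$ of $\Q[\Gamma]$. You use the fact that a faithful semisimple module forces $A$ to be semisimple, and take the central idempotent of the trivial Wedderburn factor. Adjoining the trivial summand $\Z$ is exactly what makes $\varepsilon$ factor through $A$, so any lift $p$ of $\pi$ automatically has $\varepsilon(p)=1$. This replaces the paper's Lemma~\ref{lem_q_step}.

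Both constructions end with an element acting on $V\otimes\Q$ as a nonzero integer times the $\Gamma$-equivariant projection onto the invariants. The final step, killing the finite torsion by multiplying by its exponent, is the same in both. The paper's argument is more elementary and self-contained, needing only irreducibility and a dimension count. Yours is shorter and more conceptual: it avoids the case split between trivial and nontrivial irreducibles and makes it transparent that the quasiprojection is a scaled isotypic projection. The cost is invoking Wedderburn theory, or the vanishing of the Jacobson radical.
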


\begin{remark}
 If $\varepsilon(q)=1$, then $q$ would be a $\Gamma$-invariant projection $V\twoheadrightarrow V^{\Gamma}$. However, such a projection may not exist. The element $q$ will be called a \textit{quasiprojection}.
\end{remark}

To prove Lemma~\ref{lem_q} we need the following auxiliary lemma.

\begin{lem}\label{lem_q_step}
 Let $W$ be an irreducible $\Q[\Gamma]$-module such that $W$ is non-trivial, i.\,e., not isomorphic to~$\Q$ with the trivial $\Gamma$-action. Let $w\in W$ be a nonzero vector. Then there exists an element $r\in\Z[\Gamma]$ such that $\varepsilon(r)\ne0$ and  $rw=0$.
\end{lem}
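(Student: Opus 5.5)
Since $W$ is non-trivial and irreducible, I will first produce a single group element that moves $w$, and then build $r$ out of it. The key observation is that $w$ is not fixed by all of $\Gamma$. Indeed, if $\gamma w=w$ for all $\gamma\in\Gamma$, then $\Q w$ would be a nonzero $\Gamma$-invariant subspace. By irreducibility $W=\Q w$, so $W$ would be the trivial one-dimensional module, contrary to the assumption. Hence there exists $\gamma_0\in\Gamma$ with $\gamma_0w\ne w$.

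The plan is to consider the cyclic $\Q[\Gamma]$-submodule generated by $w$, which is all of $W$, together with the left ideal $\Ann(w)=\{a\in\Q[\Gamma]: aw=0\}$. It suffices to find some $a\in\Ann(w)$ with $\varepsilon(a)\ne0$. Then clearing denominators (multiplying $a$ by a suitable positive integer) gives $r\in\Z[\Gamma]$ with $rw=0$ and $\varepsilon(r)\ne0$.

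Suppose, to the contrary, that $\Ann(w)\subseteq\ker\varepsilon=I$, the augmentation ideal of $\Q[\Gamma]$. Then the surjection $\Q[\Gamma]\to W$, $a\mapsto aw$, has kernel $\Ann(w)\subseteq I$. Consequently the trivial module $\Q[\Gamma]/I\cong\Q$ is a quotient of $\Q[\Gamma]/\Ann(w)\cong W$, namely $W\twoheadrightarrow \Q[\Gamma]/I$. A nonzero quotient of the irreducible module $W$ must be isomorphic to $W$. So $W\cong\Q$ with trivial action, a contradiction. Hence $\Ann(w)\not\subseteq I$, which yields the desired $a$.

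This argument is short, and I expect no real obstacle. The only care needed is to note that $\Q[\Gamma]/\Ann(w)\cong W$ uses that $w$ generates $W$, which holds by irreducibility since $w\ne0$. The first paragraph is then not even strictly needed: the contradiction already comes from $W\not\cong\Q$.
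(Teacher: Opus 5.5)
Your proof is correct and is essentially the paper's argument: you identify $W\cong\Q[\Gamma]/\Ann(w)$, note that $\Ann(w)\subseteq\ker\varepsilon$ would make $\varepsilon$ induce a surjection $W\twoheadrightarrow\Q$ onto the trivial module (impossible for an irreducible non-trivial $W$), and then clear denominators. As you observe yourself, the preliminary step producing $\gamma_0$ with $\gamma_0w\ne w$ is unnecessary.
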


\begin{proof}
Since $W$ is irreducible, we see that $\Q[\Gamma]w=W$. Consider the annihilator
$$
\Ann(w)=\left\{r\in\Q[\Gamma]: rw=0\right\}.
$$
Then $\Ann(w)$ is a left ideal of~$\Q[\Gamma]$ and $W\cong\Q[\Gamma]/\Ann(w)$. If $\Ann(w)$ were contained in~$\ker\varepsilon$, then $\varepsilon$ would induce a surjective homomorphism of $\Q[\Gamma]$-modules $W\twoheadrightarrow\Q$. However, such a homomorphism cannot exist, since $W$ is irreducible and not isomorphic to~$\Q$ with the trivial $\Gamma$-action. Hence, there exists an element $r\in\Ann(w)$ with $\varepsilon(r)\ne 0$. Multiplying $r$ by a nonzero integer, we can obtain a required element from~$\Z[\Gamma]$.
\end{proof}

\begin{proof}[Proof of Lemma~\ref{lem_q}]
We start with the case when $V$ is itself a semisimple finite-dimensional $\Q[\Gamma]$-module. Though $V$ is not finitely generated as an abelian group, let us prove that for such~$V$ there exists a quasiprojection~$q$ with the required properties~(1)--(3).

First, suppose that $V$ is an  irreducible $\Q[\Gamma]$-module. If $V\cong\Q$ is a trivial $\Q[\Gamma]$-module, then we can take $q=1$. So we assume that $V$ is nontrivial; then $V^{\Gamma}=0$. Take a nonzero vector $v_1\in V$. By Lemma~\ref{lem_q_step} there exists an element $r_1\in\Z[\Gamma]$ such that  $\varepsilon(r_1)\ne 0$ and $r_1v_1=0$. If $r_1V=0$, then we are done. If $r_1V\ne0$, then we take a nonzero vector $v_2\in r_1V$ and choose an element $r_2\in\Z[\Gamma]$ such that $\varepsilon(r_2)\ne 0$ and $r_2v_2=0$. We then proceed in the same way: at the $i$th step we take a nonzero vector $v_i\in r_{i-1}\cdots r_1V$ and choose an element $r_i\in\Z[\Gamma]$ such that $\varepsilon(r_i)\ne 0$ and $r_iv_i=0$. The dimensions satisfy the strict inequalities
$$
\dim V>\dim(r_1V)>\dim(r_2r_1V)>\ldots,
$$
so the process terminates after finitely many steps, and we obtain an element $q=r_k\cdots r_1$ satisfying $\varepsilon(q)\ne 0$ and~$qV=0$.

Second, consider an arbitrary semisimple finite-dimensional $\Q[\Gamma]$-module~$V$. We write
$$
V=V^{\Gamma}\oplus V_1\oplus \cdots\oplus V_m,
$$
where $V_1,\ldots,V_m$ are nontrivial irreducible representations. By the above there exist elements $q_1,\ldots,q_m\in\Z[G]$ such that $\varepsilon(q_i)\ne 0$ and $q_iV_i=0$ for each~$i$. Put  $q=q_1\cdots q_m$. Then $\varepsilon(q)\ne 0$ and $q$ annihilates every of the summands $V_1,\ldots,V_m$. Moreover, $qv=\varepsilon(q)v$ for all $v\in V^{\Gamma}$. It follows that $qV\subseteq V^{\Gamma}$ and the map $q\colon V\to V^{\Gamma}$ is $\Gamma$-invariant.

Finally, consider the general case of an arbitrary $\Z[\Gamma]$-module~$V$ that is finitely generated as an abelian group and such that  $V\otimes\Q$ is semisimple. By the above, we already know the existence of the required quasiprojection~$q_{\Q}\in\Z[\Gamma]$ for the $\Q[\Gamma]$-module~$V\otimes\Q$. Since $V$ is a finitely generated abelian group, its torsion subgroup $T=\tors(V)$ is finite. Let $a$ be the largest order of an element in~$T$. Then $aT=0$. It follows that $q=aq_{\Q}$ can serve as the required quasiprojection for~$V$.
\end{proof}

\section{Preliminaries on arithmetic groups}\label{section_arithm}

As in the Introduction, let $\bG$ be a connected semisimple algebraic subgroup of~$\bGL_n$ defined over~$\Q$ and let $\Gamma$ be a Zariski dense arithmetic subgroup of~$\bG(\Q)$. We shall restrict ourselves to groups that have the following property.
\smallskip

\textbf{Almost Algebraic Representations (AAR) Property:} \textit{Every representation of~$\Gamma$ on a finite-dimensional $\Q$-vector space is almost algebraic.}
\smallskip

The only example of interest to us will be the groups~$\Sp_{2g}(\Z)$ with $g\ge 2$. In this particular case, the AAR property was established by Bass, Milnor, and Serre~\cite[Theorem~16.2]{BMS67}, see  also~\cite{Rag67}. Note, however, that in fact this property is known for a wide class of arithmetic groups, including all arithmetic subgroups of simple algebraic groups of $\Q$-rank~$\ge2$ defined over~$\Q$ (see~\cite{Ser79}; in this case the property follows from Margulis Superrigidity Theorem), as well as all groups possessing a certain form of the congruence subgroup property (see~\cite{Lub80}).

We need the following well known consequences of the AAR property. In the next two corollaries, $\Gamma$ is a Zariski dense arithmetic subgroup of a connected semisimple algebraic group~$\bG(\Q)$ defined over~$\Q$ such that that $\Gamma$ possesses the AAR property.

\begin{cor}\label{cor_complete_red}
 Every representation of\/~$\Gamma$ on a finite-dimensional $\Q$-vector space is semisimple.
\end{cor}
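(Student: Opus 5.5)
Let $\rho\colon\Gamma\to\GL(V)$ be a representation on a finite-dimensional $\Q$-vector space. The plan is to pass to a finite-index subgroup where $\rho$ becomes algebraic, use semisimplicity of rational representations of $\bG$ there, and then return to $\Gamma$ by an averaging argument.

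First, by the AAR property there is a finite-index subgroup $\Lambda\le\Gamma$ such that $\rho|_{\Lambda}$ is algebraic. In other words, $\rho|_{\Lambda}$ is the restriction of a rational representation $\bG\to\bGL(V)$ defined over~$\Q$. Since $\bG$ is connected semisimple and we are in characteristic~$0$, every rational representation of~$\bG$ is completely reducible. This holds over~$\bar\Q$, and it descends to~$\Q$ because complete reducibility is insensitive to field extension in characteristic zero: a $\bG$-invariant complement defined over $\bar\Q$ can be replaced by one defined over~$\Q$ by averaging over a Galois orbit, or by using that $\mathrm{Ext}^1$ commutes with base change.

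Next, we check that $\bG$-invariant subspaces of $V$ coincide with $\Lambda$-invariant ones. A finite-index subgroup $\Lambda$ of a Zariski dense arithmetic subgroup of a connected group is itself Zariski dense. Indeed, its Zariski closure has finite index in $\overline{\Gamma}=\bG$, hence equals $\bG$ by connectedness. Since the stabilizer of a subspace is Zariski closed, every $\Lambda$-invariant subspace is $\bG$-invariant, and conversely. It follows that $V$ is semisimple as a $\Q[\Lambda]$-module.

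Finally, we pass from $\Lambda$ to $\Gamma$ using the standard Maschke-type averaging argument. Shrinking $\Lambda$ to its normal core, we may assume $\Lambda$ is normal of finite index; this does not affect the previous step, since the argument applies to any finite-index subgroup. Let $W\subseteq V$ be $\Gamma$-invariant, and choose a $\Lambda$-equivariant projection $\pi\colon V\to W$. Define
$$
\pi'=\frac{1}{[\Gamma:\Lambda]}\sum_{\gamma\in\Gamma/\Lambda}\gamma\pi\gamma^{-1}.
$$
Each term is independent of the choice of coset representative because $\pi$ is $\Lambda$-equivariant. Therefore $\pi'$ is a $\Gamma$-equivariant projection onto~$W$, and $\ker\pi'$ is a $\Gamma$-invariant complement to~$W$. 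This proves the corollary.

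The only potentially delicate point is the complete reducibility of rational $\bG$-representations over $\Q$ rather than over an algebraically closed field. We cite it as the standard fact recalled in the Introduction.
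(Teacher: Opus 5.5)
Your proof is correct, and it is the standard deduction from the AAR property that the paper does not spell out but attributes to \cite[Corollary~16.4]{BMS67}: pass to a finite-index $\Lambda$ on which $\rho$ is algebraic, use Zariski density of $\Lambda$ and complete reducibility of rational $\bG$-representations in characteristic~$0$, then average a $\Lambda$-equivariant projection over $\Gamma/\Lambda$. As a minor remark, replacing $\Lambda$ by its normal core is unnecessary, since your averaging over left cosets is already well defined and $\Gamma$-equivariant for any finite-index $\Lambda$.
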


\begin{cor}\label{cor_decompose_prod}
 Let $\rho\colon\Gamma\to\GL(V)$ be a representation of~$\Gamma$ on a finite-dimensional $\Q$-vector space. Then $\rho$ can be written as $\rho(\gamma)=f(\gamma)u(\gamma)$, where
 \begin{itemize}
  \item $f\colon \Gamma\to\GL(V)$ is an algebraic representation,
  \item $u\colon \Gamma\to\GL(V)$ is a representation that factors through a finite quotient group~$\Gamma/\Lambda$,
  \item $f(\gamma_1)$ commutes with $u(\gamma_2)$ for all $\gamma_1,\gamma_2\in\Gamma$.
 \end{itemize}
 \end{cor}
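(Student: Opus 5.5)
The plan is to start from the AAR property: $\rho$ is almost algebraic, so there is a finite-index subgroup $\Lambda_0\le\Gamma$ and a rational representation $F\colon\bG\to\bGL(V)$ defined over~$\Q$ with $\rho|_{\Lambda_0}=F|_{\Lambda_0}$. We may shrink $\Lambda_0$ to a normal finite-index subgroup $\Lambda$ of~$\Gamma$; it is still arithmetic and hence Zariski dense in~$\bG$. Indeed, $\bG$ is connected, and the Zariski closure of a finite-index subgroup of a Zariski dense group has finite index in~$\bG$. Put $f=F|_{\Gamma}$, which is an algebraic representation of~$\Gamma$, and define
$$
u(\gamma)=f(\gamma)^{-1}\rho(\gamma).
$$
By construction $u$ is trivial on~$\Lambda$. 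What must be shown is that $u$ is a homomorphism and that $u(\Gamma)$ commutes with $f(\Gamma)$.

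The key step is to show that $\rho(\gamma)$ and $f(\gamma)$ induce the same conjugation on $f(\Lambda)=\rho(\Lambda)$. For $\gamma\in\Gamma$ and $\lambda\in\Lambda$ we have $\gamma\lambda\gamma^{-1}\in\Lambda$, so
$$
\rho(\gamma)F(\lambda)\rho(\gamma)^{-1}=\rho(\gamma\lambda\gamma^{-1})=F(\gamma\lambda\gamma^{-1})=f(\gamma)F(\lambda)f(\gamma)^{-1}.
$$
Hence $u(\gamma)=f(\gamma)^{-1}\rho(\gamma)$ commutes with $F(\lambda)$ for every $\lambda\in\Lambda$. The centralizer of $u(\gamma)$ in~$\bG$ under~$F$ is a Zariski closed condition, and $\Lambda$ is Zariski dense in~$\bG$. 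Therefore $u(\gamma)$ commutes with $F(\bG)$, and in particular with all of $f(\Gamma)$. This density step is the only place requiring care: one must confirm that $\Lambda$ remains Zariski dense. This follows because $\Lambda$ is arithmetic and $\bG$ is connected semisimple, as above.

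With commutation in hand, the homomorphism property is a direct computation:
$$
u(\gamma_1\gamma_2)=f(\gamma_2)^{-1}f(\gamma_1)^{-1}\rho(\gamma_1)\rho(\gamma_2)=f(\gamma_2)^{-1}u(\gamma_1)\rho(\gamma_2).
$$
Since $u(\gamma_1)$ commutes with $f(\gamma_2)^{-1}$, this equals $u(\gamma_1)f(\gamma_2)^{-1}\rho(\gamma_2)=u(\gamma_1)u(\gamma_2)$. Thus $u$ is a representation that is trivial on the normal subgroup~$\Lambda$, so it factors through the finite quotient~$\Gamma/\Lambda$. Finally $\rho(\gamma)=f(\gamma)u(\gamma)$, and $f(\gamma_1)$ commutes with $u(\gamma_2)$ for all $\gamma_1,\gamma_2\in\Gamma$, which proves the corollary.
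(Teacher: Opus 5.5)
Your proof is correct and is essentially the standard computation the paper relies on. The paper gives no argument of its own: it refers to Serre's computation for $\bSL_2$ and to Lubotzky's Proposition~5.1, which proceed exactly as you do. One passes to a normal finite-index $\Lambda$ on which $\rho$ agrees with a rational $F$, shows that $F(\gamma)^{-1}\rho(\gamma)$ centralizes $F(\Lambda)$ and hence all of $F(\bG)$ by Zariski density, and concludes that $u$ is a homomorphism trivial on $\Lambda$.

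One wording correction: the Zariski density of $\Lambda$ comes from its finite index in the Zariski-dense group $\Gamma$ together with the connectedness of $\bG$, as your ``indeed'' sentence says. It does not follow from arithmeticity as such, since arithmetic subgroups need not be Zariski dense when $\bG(\mathbb{R})$ has compact factors.
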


Corollary~\ref{cor_complete_red} is deduced from the AAR property in~\cite[Corollary~16.4]{BMS67}. Corollary~\ref{cor_decompose_prod} follows from a computation made by Serre~\cite[p.~502]{Ser70} in the case of~$\bSL_2$; for an explanation, why this works in the general case, see Proposition~5.1 and its proof in~\cite{Lub80}.

\section{Proof of Theorem~\ref{thm_main}}\label{section_proof_main}

In this section we formulate an algebraic theorem (Theorem~\ref{thm_alg}) that will enable us to prove finite generation over~$\Z$ of a transvection unipotent $\Sp_{2g}(\Z)$-module~$M$, relying not on the finite generation of~$M$ over~$\Z\bigl[\Sp_{2g}(\Z)\bigr]$ but on a much weaker condition---the finite generation of suitable coinvariant groups~$(M\otimes A)_{\Lambda}$, where $\Lambda\le\Sp_{2g}(\Z)$ is a subgroup of finite index. This theorem may be of further interest not only for the groups~$\Sp_{2g}(\Z)$ but also for the groups~$\SL_n(\Z)$, in view of possible applications to automorphisms of free groups. We therefore state it in some reasonable generality.

\begin{theorem}\label{thm_alg}
 Let\/ $\Gamma$ be a Zariski dense arithmetic subgroup of a semisimple algebraic group~$\bG(\Q)$ defined over~$\Q$ such that $\Gamma$ possesses the AAR property. Let $\CT\subseteq\Gamma$ be a conjugation-invariant subset. Suppose that  $\Gamma=\langle\gamma_1\rangle\cdots\langle\gamma_m\rangle$ with $\gamma_i\in\CT$. Assume that a $\CT$-unipotent $\Gamma$-module~$M$ satisfies the following condition:
 \begin{itemize}
  \item[$(*)$] for any finite-index subgroup  $\Lambda\le\Gamma$ and any $\Lambda$-module~$A$ that is finitely generated as an abelian group, the coinvariant group $(M\otimes A)_{\Lambda}$ is finitely generated.
 \end{itemize}
 Then $M$ is finitely generated as an abelian group.
\end{theorem}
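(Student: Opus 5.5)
The plan is to reduce everything to a finite $\Z$-algebra through which $\Gamma$ acts on $M$, and then convert the coinvariant hypothesis $(*)$ into finite generation by means of the quasiprojection of Lemma~\ref{lem_q}.

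\emph{Step 1: a finite $\Z$-algebra acting on $M$.} Let $n$ be the unipotency index of $M$, and let $\CU_n$ be the universal $\CT$-unipotent module. By Proposition~\ref{propos_U_fg}, $\CU_n$ is a finitely generated abelian group. Set $R=\Z[\Gamma]/\Ann(\CU_n)$, which embeds in $\End_{\Z}(\CU_n)$ and is therefore a ring that is finitely generated as an abelian group. By Proposition~\ref{propos_universal}, every $z\in M$ is the image $\psi_z(u)$ of a $\Gamma$-map $\CU_n\to M$. Hence every element of $\Ann(\CU_n)$ kills $M$, and $M$ becomes an $R$-module. Since $R\otimes\Q$ is the image of $\Q[\Gamma]$ in $\End(\CU_n\otimes\Q)$, Corollary~\ref{cor_complete_red} shows that $\CU_n\otimes\Q$ is semisimple, and so $R\otimes\Q$ is a semisimple algebra. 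Moreover $M\cong\Hom_R(R,M)=\Hom_{\Z}(R,M)^{\Gamma}$, where $\Gamma$ acts on $\Hom_{\Z}(R,M)\cong R^*\otimes M$ by conjugation, with $R^*=\Hom(R,\Z)$ (after replacing $R$ by $R/\tors$ if necessary).

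\emph{Step 2: the quasiprojection.} Apply Lemma~\ref{lem_q} to $V=\End_{\Z}(R)\cong R^*\otimes R$ with the conjugation action. This $V$ is finitely generated, and $V\otimes\Q$ is semisimple by Corollary~\ref{cor_complete_red}. The conjugation action of $\Gamma$ on $R^*\otimes M$ factors through the same finite $\Z$-algebra (the image of $\Z[\Gamma]$ in $\End(R^*)\otimes R$) as its action on $R^*\otimes R$. Therefore $q$ still maps $R^*\otimes M$ into invariants and is constant on $\Gamma$-orbits. Consequently $q$ descends to a homomorphism
\[
(M\otimes R^*)_{\Gamma}\longrightarrow (R^*\otimes M)^{\Gamma}\cong M,
\]
whose composite with the natural map $M\to(M\otimes R^*)_{\Gamma}$ is multiplication by $N=\varepsilon(q)\neq0$. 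By $(*)$ with $\Lambda=\Gamma$ and $A=R^*$, the group $(M\otimes R^*)_\Gamma$ is finitely generated, so $NM$ is a finitely generated abelian group.

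\emph{Step 3: the $N$-torsion.} It remains to control $M/NM$ and $M[N]$. Both are modules over the finite ring $R/N^2R$, and the action of $\Gamma$ on them factors through the finite quotient $\Gamma/\Lambda$, where $\Lambda$ is the kernel of $\Gamma\to(R/N^2R)^{\times}$. Then $\Lambda$ acts trivially on $M/N^2M$. Apply $(*)$ to this $\Lambda$ with $A=R^*/N^2R^*$ (or simply $A=\Z/N^2$). This shows that $(M\otimes\Z/N^2)_\Lambda=M/N^2M$ is finitely generated, hence finite. From the finite generation of $NM$ and of $M/N^2M$, one then gets that $M$ is finitely generated, via the exact sequence $0\to NM\to M\to M/NM\to0$ together with the finiteness of $M/NM$.

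The main obstacle is Step 2. One must verify carefully that a $q$ chosen for the finitely generated module $\End_{\Z}(R)$ still satisfies properties (2)--(3) of Lemma~\ref{lem_q} on the non-finitely-generated module $\Hom(R,M)$. The approach is to check that these properties are polynomial identities in the image of $\Z[\Gamma]\otimes\Z[\Gamma]^{\mathrm{op}}$ acting on $R^*\otimes R$. If torsion in $R$ causes trouble, I will first pass to a finite-index $\Lambda$, using Corollary~\ref{cor_decompose_prod}, so that the relevant representations become algebraic.
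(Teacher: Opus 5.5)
Your argument is, up to notation, the paper's proof. Since $J_n$ is a two-sided ideal, $\Ann(\CU_n)=J_n$, so your ring $R$ is just $\CU_n$ itself. Your $V=\operatorname{End}_\Z(R)$ differs from the paper's $\CV_n=\Hom_\Z(\CU_n',\CU_n)$ only in how torsion is treated, and your Step~3 is the paper's final step with $N$ in place of $c_n$. You transfer properties (2)--(3) of $q$ from $V$ to $\Hom_\Z(R,M)$ by noting that both actions factor through the image of $\Z[\Gamma]$ in $\operatorname{End}(R^*)\otimes R$, which acts faithfully on $V$. This is a legitimate substitute for the paper's maps $\psi_{z*}\colon\CV_n\to\Hom_\Z(\CU_n',M)$, whose images generate. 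When $R$ is torsion-free, the faithfulness you flag as the main obstacle is immediate: $\operatorname{End}(R^*)\otimes R\cong M_d(R)$ acts on $R^*\otimes R\cong R^d$ by matrix multiplication, and $R$ has a unit.

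The genuine gap is torsion, and it cannot be waved away, since $R=\CU_n$ can have torsion; that is exactly why the paper introduces $\CU_n'$ and $a_n$. Replacing $R$ by $R'=R/\tors(R)$ breaks two of your claims.

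First, $\Hom_R(R',M)=\{z\in M:\ \tors(R)z=0\}$, which is in general smaller than $M$ (for $M=R$ it does not contain $1$). So the isomorphism $M\cong\Hom_\Z(R',M)^\Gamma$ fails.

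Second, $M$ is not an $R'$-module. The conjugation action on $R'^*\otimes M$ therefore factors through $\operatorname{End}(R'^*)\otimes R$, not through the algebra acting on $\operatorname{End}_\Z(R')\cong R'^*\otimes R'$. For a quasiprojection chosen for $\operatorname{End}_\Z(R')$, you only know that $\gamma q-q$ and $q\gamma-q$ act through $\operatorname{End}(R'^*)\otimes\tors(R)$, which need not kill $R'^*\otimes M$.

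Keeping $R$ does not help either: then $\Hom_\Z(R,M)\not\cong R^*\otimes M$, and $(*)$ does not directly control its coinvariants. Your fallback via Corollary~\ref{cor_decompose_prod} is beside the point, because torsion in $R$ is invisible to rational representations.

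The fix is the paper's:
\begin{itemize}
\item take $V=\Hom_\Z(R',R)\cong R'^*\otimes R$, on which $\operatorname{End}(R'^*)\otimes R\cong M_d(R)$ still acts faithfully, so $q$ works on $\Hom_\Z(R',M)\cong R'^*\otimes M$;
\item use the injection $aM\hookrightarrow\Hom_R(R',M)$, $w\mapsto(\bar r\mapsto rw)$, where $a$ is the exponent of $\tors(R)$.
\end{itemize}
This yields finite generation of $aNM$. Step~3 then goes through with $aN$ in place of $N$, which is precisely the paper's $c_n=a_nb_n$.
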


Before proving this theorem, we deduce Theorem~\ref{thm_main} from it.

\begin{proof}[Proof of Theorem~\ref{thm_main}]
 If $g\le 2$, then there is nothing to prove.
 For each~$g\ge 3$, we proceed by induction on~$k$. The induction base $k=0$ is trivial. We assume that the groups~$H_i(\I_{g,p}^b;\Z)$ are finitely generated for $i<k$, and prove that the group $H_k(\I_{g,p}^b;\Z)$ is also finitely generated, provided that $k\le g-2$. Take for $\CT$ the set of all symplectic transvections $t_x\in\Sp_{2g}(\Z)$, where $x\in H_1(\I_{g,p}^b;\Z)=\Z^{2g}$. By Proposition~\ref{propos_Tav} the group $\Sp_{2g}(\Z)$ has finite width with respect to a sequence of symplectic transvections. The $\Sp_{2g}(\Z)$-module $H_k(\I_{g,p}^b;\Z)$ is $\CT$-unipotent of unipotency index~$\le k+1$ by Theorem~\ref{thm_annih} and satisfies condition~$(*)$ by Proposition~\ref{propos_fg_coinv}. Thus, by Theorem~\ref{thm_alg} we conclude that $H_k(\I_{g,p}^b;\Z)$ is a finitely generated abelian group.
\end{proof}

In the remainder of this section, we prove Theorem~\ref{thm_alg}. To do this we first need to study some properties of the universal $\CT$-unipotent modules from Section~\ref{section_U}. We assume that $\Gamma$ and~$\CT$ are as in Theorem~\ref{thm_alg}.

Fix a positive integer~$n$. Let~$\CU_n$ be the universal $\CT$-unipotent $\Gamma$-module of unipotency index~$n$ given by~\eqref{eq_Un}. By Proposition~\ref{propos_U_fg} the module~$\CU_n$ is a finitely generated abelian group. We denote by~$\Tors(\CU_n)$ the torsion subgroup of~$\CU_n$, and set
\begin{gather*}
\CU_n'=\CU_n/\Tors(\CU_n),\\
\CV_n=\Hom_{\Z}(\CU'_n,\CU_n).
\end{gather*}
(As usually, if $M$ and~$N$ are $\Gamma$-modules, then the structure of a $\Gamma$-module on $\Hom_{\Z}(M,N)$ is defined by $(\gamma\xi)(z)=\gamma\cdot\xi(\gamma^{-1}z)$, where $\gamma\in\Gamma$, $\xi\in\Hom_{\Z}(M,N)$, and $z\in M$.)

Since $\CU_n$ is a finitely generated abelian group, we have that $\CV_n$ is also a finitely generated abelian group. Moreover, by Corollary~\ref{cor_complete_red} the $\Q[\Gamma]$-module $\CV_n\otimes\Q$ is semisimple. Hence by Lemma~\ref{lem_q} there exists a quasiprojection $q_n\in \Z[\Gamma]$ such that $\varepsilon(q_n)\ne 0$, $q_n\CV_n\subseteq\CV_n^{\Gamma}$ and the map $q_n\colon \CV_n\to\CV_n^{\Gamma}$ is $\Gamma$-invariant. We fix such a quasiprojection.

\begin{propos}\label{propos_aux_main_new}
 Let $M$ be a $\CT$-unipotent $\Gamma$-module of unipotency index~$\le n$. Then
  $$
  q_n\Hom_{\Z}(\CU'_n,M)\subseteq\Hom_{\Z}(\CU'_n,M)^{\Gamma}=\Hom_{\Z[\Gamma]}(\CU'_n,M),
  $$
  and the map
  $$
  q_n\colon \Hom_{\Z}(\CU'_n,M)\to\Hom_{\Z[\Gamma]}(\CU'_n,M)
  $$
  is $\Gamma$-invariant.
\end{propos}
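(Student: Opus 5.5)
The idea is to reduce the statement for an arbitrary $M$ to the already-established properties of $q_n$ on $\CV_n=\Hom_{\Z}(\CU'_n,\CU_n)$, using naturality of the action in the target module and the universality of~$\CU_n$ (Proposition~\ref{propos_universal}).

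\emph{Step 1: the easy equality.} First I would record that $\Hom_{\Z}(\CU'_n,M)^{\Gamma}=\Hom_{\Z[\Gamma]}(\CU'_n,M)$. This holds by the definition of the action $(\gamma\xi)(z)=\gamma\xi(\gamma^{-1}z)$.

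\emph{Step 2: factor through $\CU_n$.} Let $\xi\in\Hom_{\Z}(\CU'_n,M)$. Since $\CU'_n$ is a finitely generated free abelian group, write $\xi=\sum_j \xi_j$, where $\xi_j(z)=\lambda_j(z)m_j$ for a basis-dual family of functionals $\lambda_j\colon\CU'_n\to\Z$ and elements $m_j\in M$. By Proposition~\ref{propos_universal} there is a $\Gamma$-homomorphism $\psi_{m_j}\colon\CU_n\to M$ with $\psi_{m_j}(u)=m_j$. Put $\eta_j(z)=\lambda_j(z)u\in\CV_n$. Then $\xi_j=\psi_{m_j}\circ\eta_j$, so $\xi=\sum_j(\psi_{m_j})_*\eta_j$, where $(\psi)_*\colon\CV_n\to\Hom_{\Z}(\CU'_n,M)$ denotes postcomposition.

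\emph{Step 3: equivariance of postcomposition.} Because each $\psi_{m_j}$ is $\Gamma$-equivariant, $(\psi_{m_j})_*$ is a homomorphism of $\Gamma$-modules: indeed $\psi(\gamma\eta(\gamma^{-1}z))=\gamma\psi(\eta(\gamma^{-1}z))$. Hence it commutes with multiplication by any element of $\Z[\Gamma]$, in particular with $q_n$. This gives $q_n\xi=\sum_j(\psi_{m_j})_*(q_n\eta_j)$.

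\emph{Step 4: conclude.} By the defining property of $q_n$, each $q_n\eta_j$ lies in $\CV_n^{\Gamma}$, and $(\psi_{m_j})_*$ maps invariants to invariants. Hence $q_n\xi$ is $\Gamma$-invariant. For $\Gamma$-invariance of the map, let $\gamma\in\Gamma$. Then $\gamma\xi=\sum_j(\psi_{m_j})_*(\gamma\eta_j)$, and so
$$
q_n\gamma\xi=\sum_j(\psi_{m_j})_*(q_n\gamma\eta_j)=\sum_j(\psi_{m_j})_*(q_n\eta_j)=q_n\xi,
$$
using property~(3) of Lemma~\ref{lem_q} for $\CV_n$.

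The main point needing care is Step~2: every $\Z$-linear map $\CU'_n\to M$ must be expressible through $\CV_n$ and $\Gamma$-maps $\CU_n\to M$. Freeness of $\CU'_n$ makes this work, because $\Hom_{\Z}(\CU'_n,M)\cong\Hom_{\Z}(\CU'_n,\Z)\otimes M$. Note that the decomposition of $\xi$ is not $\Gamma$-equivariant, but this does not matter. For the second claim we decompose $\gamma\xi$ using the same $m_j$ with $\gamma\eta_j$, which is legitimate by the equivariance established in Step~3.
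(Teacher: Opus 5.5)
Your proof is correct and follows essentially the paper's own argument. You factor each $\xi\in\Hom_{\Z}(\CU'_n,M)$ as $\sum_j\psi_{\xi(e_j)}\circ\sigma_j$ through the $\Gamma$-equivariant postcomposition maps $\psi_{z*}\colon\CV_n\to\Hom_{\Z}(\CU'_n,M)$, and then transfer the properties of $q_n$ on $\CV_n$; the only difference is that you verify invariance explicitly on the decomposition rather than invoking the paper's generation-plus-commutative-diagram phrasing.
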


\begin{proof}
 By Proposition~\ref{propos_universal} for each element $z\in M$ there is a unique $\Gamma$-module homomorphism $\psi_z\colon \CU_n\to M$ such that $\psi_z(u)=z$. Postcomposition with $\psi_z$ defines a $\Gamma$-module homomorphism
 $$
 \psi_{z*}\colon \CV_n\to \Hom_{\Z}(\CU'_n,M).
 $$
 We have a commutative diagram
 $$
\begin{tikzcd}
 \CV_n \arrow[r,"q_n"] \arrow[d,"{\psi_{z*}}"] &
 \CV_n \arrow[d,"{\psi_{z*}}"]\\
 \Hom_{\Z}(\CU'_n,M) \arrow[r,"q_n"]  &
 \Hom_{\Z}(\CU'_n,M)
\end{tikzcd}
 $$
 By the construction of~$q_n$ we know that the upper horizontal arrow is a $\Gamma$-invariant map whose image is contained in the invariant submodule~$\CV_n^{\Gamma}$. Let us prove that the lower horizontal arrow is also a $\Gamma$-invariant map whose image is contained in the invariant submodule $\Hom_{\Z}(\CU_n',M)^{\Gamma}$. To do this, it suffices to prove that the union of the submodules $\psi_{z*}(\CV_n)$ over all $z\in M$ generates $\Hom_{\Z}(\CU'_n,M)$ as a an abelian group. We now show that this is indeed the case. Let $e_1,\ldots,e_d$ be a basis of the free abelian group~$\CU'_n$. For each $i=1,\ldots,d$, consider a $\Z$-linear map $\sigma_i\colon \CU'_n\to \CU_n,$ such that $\sigma_i(e_i)=u$ and $\sigma_i(e_j)=0$ for $i\ne j$. Then for any $\xi\in \Hom_{\Z}(\CU'_n,M)$, we have that
 $$
 \xi=\sum_{i=1}^d\psi_{\xi(e_i)}\circ\sigma_i =\sum_{i=1}^d\psi_{\xi(e_i)*}(\sigma_i),
 $$
 as required.
\end{proof}

\begin{cor}\label{cor_qn_image}
 If $M$ satisfies condition~$(*)$, then the group~$q_n\Hom_{\Z}(\CU'_n,M)$ is finitely generated.
\end{cor}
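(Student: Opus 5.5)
Since $q_n\colon \Hom_{\Z}(\CU'_n,M)\to\Hom_{\Z}(\CU'_n,M)$ is $\Gamma$-invariant by Proposition~\ref{propos_aux_main_new}, i.e. $q_n\gamma\xi=q_n\xi$ for all $\gamma\in\Gamma$, it factors through the coinvariants. Concretely, the map $\xi\mapsto q_n\xi$ vanishes on all elements $\gamma\xi-\xi$. Hence it induces a surjective homomorphism
$$
\Hom_{\Z}(\CU'_n,M)_{\Gamma}\twoheadrightarrow q_n\Hom_{\Z}(\CU'_n,M).
$$
It therefore suffices to show that the coinvariant group $\Hom_{\Z}(\CU'_n,M)_{\Gamma}$ is finitely generated.

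To see this, I will rewrite the Hom as a tensor product. By Proposition~\ref{propos_U_fg} the group $\CU_n$ is finitely generated, so $\CU'_n$ is a free abelian group of finite rank $d$. Hence there is a natural isomorphism of $\Gamma$-modules
$$
\Hom_{\Z}(\CU'_n,M)\cong M\otimes A,\qquad A=\Hom_{\Z}(\CU'_n,\Z)=(\CU'_n)^*,
$$
given by $m\otimes\alpha\mapsto(z\mapsto\alpha(z)m)$. This map is a bijection because $\CU'_n$ is free of finite rank. It is $\Gamma$-equivariant when $A$ carries the contragredient action $(\gamma\alpha)(z)=\alpha(\gamma^{-1}z)$ and $M\otimes A$ carries the diagonal action. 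This is a direct check against the definition of the action on $\Hom_{\Z}$.

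The module $A\cong\Z^d$ is a $\Gamma$-module that is finitely generated as an abelian group. So condition~$(*)$ applies with $\Lambda=\Gamma$, and it gives that $(M\otimes A)_{\Gamma}$ is finitely generated. Consequently $\Hom_{\Z}(\CU'_n,M)_\Gamma$ is finitely generated, and so is its image $q_n\Hom_{\Z}(\CU'_n,M)$.

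There is no serious obstacle. The only point needing care is the equivariance of the identification $\Hom_{\Z}(\CU'_n,M)\cong M\otimes(\CU'_n)^*$, and this uses only that $\CU'_n$ is torsion-free and of finite rank. That is exactly why the torsion is quotiented out in defining $\CU'_n$.
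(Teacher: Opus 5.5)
Your proposal is correct and follows the paper's own proof. The paper likewise factors the $\Gamma$-invariant map $q_n$ through the coinvariants $\Hom_{\Z}(\CU'_n,M)_{\Gamma}\cong\bigl(M\otimes\Hom_{\Z}(\CU'_n,\Z)\bigr)_{\Gamma}$ and applies condition~$(*)$ with $\Lambda=\Gamma$; you have simply written out the equivariance check that the paper leaves implicit.
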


\begin{proof}
 Since the homomorphism
  $$
  q_n\colon \Hom_{\Z}(\CU'_n,M)\to\Hom_{\Z[\Gamma]}(\CU'_n,M)
  $$
  is a $\Gamma$-invariant map, it factors through the coinvariant group
  $$
  \Hom_{\Z}(\CU'_n,M)_{\Gamma}\cong\bigl(M\otimes\Hom_{\Z}(\CU'_n,\Z)\bigr)_{\Gamma},
  $$
  which is finitely generated by condition~$(*)$.
\end{proof}

Let $a_n$ be the largest order of an element of~$\Tors(\CU_n)$. In other words, $a_n$ is the smallest positive integer such that $a_n\Tors(\CU_n)=0$. Set
$$b_n=|\varepsilon(q_n)|,\qquad c_n=a_nb_n.$$

\begin{propos}\label{propos_qn_cn}
For any $\CT$-unipotent $\Gamma$-module~$M$ of unipotency index~$\le n$, the group $q_n\Hom_{\Z}(\CU'_n,M)$ contains a subgroup that is isomorphic as an abelian group to the submodule~$c_nM\subseteq M$.
\end{propos}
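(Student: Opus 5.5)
The plan is to build, for each $z\in M$, an explicit $\Gamma$-equivariant element of $\Hom_{\Z}(\CU'_n,M)$ on which $q_n$ acts by the scalar $\varepsilon(q_n)$. I will then show that the resulting elements form a subgroup mapping isomorphically onto $c_nM$ under evaluation at the image of the fundamental generator.

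First, for $z\in M$ let $\psi_z\colon\CU_n\to M$ be the homomorphism of $\Gamma$-modules from Proposition~\ref{propos_universal}. By the uniqueness part of that proposition, $z\mapsto\psi_z$ is additive, so $\psi_{z+z'}=\psi_z+\psi_{z'}$ and $\psi_{mz}=m\psi_z$ for $m\in\Z$.

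Since $a_n\Tors(\CU_n)=0$, the map $a_n\psi_z$ vanishes on $\Tors(\CU_n)$. It therefore factors through $\CU_n'$ and gives a well-defined $\Gamma$-module homomorphism
$$
\varphi_z\colon\CU'_n\to M,\qquad \varphi_z(\bar w)=a_n\psi_z(w)=\psi_{a_nz}(w),
$$
where $\bar w$ denotes the image of $w\in\CU_n$ in $\CU'_n$. Being $\Gamma$-equivariant, $\varphi_z$ is a $\Gamma$-invariant element of $\Hom_{\Z}(\CU'_n,M)$. Hence each group element in $q_n$ acts trivially on it, and $q_n\varphi_z=\varepsilon(q_n)\varphi_z=\pm b_n\varphi_z$. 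Explicitly,
$$
(q_n\varphi_z)(\bar w)=\pm\psi_{c_nz}(w).
$$

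Next, let $S=\{q_n\varphi_z: z\in M\}$. Since $z\mapsto\varphi_z$ and $q_n$ are additive, $S$ is a subgroup of $q_n\Hom_{\Z}(\CU'_n,M)$. Let $\bar u$ be the image of $u$ in $\CU'_n$, and consider the evaluation homomorphism $S\to M$, $\xi\mapsto\xi(\bar u)$.

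By the displayed formula, $q_n\varphi_z$ is sent to $\pm\psi_{c_nz}(u)=\pm c_nz$. So the image of $S$ is exactly $c_nM$.

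For injectivity, suppose $c_nz=0$. Then $\psi_{c_nz}=\psi_0=0$, so $q_n\varphi_z=0$ by the displayed formula. Thus an element of $S$ is determined by $c_nz$, and evaluation at $\bar u$ gives an isomorphism $S\cong c_nM$ of abelian groups.

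No step is a real obstacle. The only points needing care are that $a_n\psi_z$ genuinely descends to $\CU'_n$, which is why the factor $a_n$ is needed, and that $q_n$ acts on invariant elements by the scalar $\varepsilon(q_n)$ with sign $\pm$, which is harmless.
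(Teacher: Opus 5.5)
Your proof is correct and follows essentially the same route as the paper: your subgroup $S=\{q_n\varphi_z\}=\{\pm\psi_{c_nz}\}$ (viewed on $\CU'_n$) is exactly the image of the paper's lower row $c_nM\xrightarrow{\Psi} b_n\Hom_{\Z[\Gamma]}(\CU'_n,M)\hookrightarrow q_n\Hom_{\Z}(\CU'_n,M)$. The only difference is presentational: you establish the isomorphism $S\cong c_nM$ by evaluating at $\bar u$, whereas the paper gets injectivity from $\Psi$ being an isomorphism and from $\CU_n\twoheadrightarrow\CU'_n$ being surjective; both rest on the same observation that $q_n$ acts on $\Gamma$-invariant homomorphisms by the scalar $\varepsilon(q_n)$.
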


\begin{proof}
 Consider the commutative diagram:
 $$
 \begin{tikzcd}
  M \arrow[r,"\Psi"',"\cong"] & \Hom_{\Z[\Gamma]}(\CU_n,M) & \\
  a_nM \arrow [u, hookrightarrow] \arrow[d,twoheadrightarrow, "{\varepsilon(q_n)}"]
  \arrow [r,hookrightarrow,"{\Psi}"'] &
  \Hom_{\Z[\Gamma]}(\CU'_n,M) \arrow [u, hookrightarrow] \arrow [r,hookrightarrow]
  \arrow[d,twoheadrightarrow,"{\varepsilon(q_n)}"] &
  \Hom_{\Z}(\CU'_n,M) \arrow[d,twoheadrightarrow,"{q_n}"] \\
  c_nM \arrow [r,hookrightarrow,"{\Psi}"'] & b_n\Hom_{\Z[\Gamma]}(\CU'_n,M)
  \arrow [r,hookrightarrow]
  & q_n\Hom_{\Z}(\CU'_n,M)
 \end{tikzcd}
 $$
 Here the upper horizontal arrow is the isomorphism~$\Psi$ from Proposition~\ref{propos_universal}. The middle and lower arrows are the restrictions of this isomorphism to the subgroups~$a_nM$ and~$c_nM$, respectively. The inclusion $\Hom_{\Z[\Gamma]}(\CU'_n,M)\hookrightarrow \Hom_{\Z[\Gamma]}(\CU_n,M)$ is induced by the surjection $\CU_n\twoheadrightarrow \CU_n'$. The group $\Psi(a_nM)$ is contained in the subgroup $\Hom_{\Z[\Gamma]}(\CU'_n,M)$ because for any homomorphism $\xi\colon \CU_n\to M$, the homomorphism~$a_n\xi$ vanishes on~$\Tors(\CU_n)$. The commutativity of the lower right square follows from Proposition~\ref{propos_aux_main_new}. From the lower row of the diagram we see that $c_nM$ injects into $q_n\Hom_{\Z}(\CU'_n,M)$.
\end{proof}

\begin{proof}[Proof of Theorem~\ref{thm_alg}]
From Corollary~\ref{cor_qn_image} and Proposition~\ref{propos_qn_cn} it follows that $c_nM$ is a finitely generated abelian group. So it suffices to prove that the quotient group~$M/c_nM$ is finite.

Since $\CU_n$ is a finitely generated abelian group, we see that $\CU_n/c_n\,\CU_n$ is a finite abelian group. Let $\Lambda$ be the kernel of the homomorphism
 $$
 \Gamma\to \Aut(\CU_n/c_n\,\CU_n).
 $$
Then $\Lambda$ has finite index in~$\Gamma$.

 Let us show that $\Lambda$ acts trivially on~$M/c_nM$. Indeed, suppose that $y\in M/c_nM$. Choose $z\in M$ so that $y$ is the reduction of~$z$  modulo~$c_nM$. By Proposition~\ref{propos_universal} there is a $\Gamma$-module homomorphism $\psi_z\colon\CU_n\to M$ such that $\psi_z(u)=z$. Then $y$ lies in the image of the homomorphism
 $$
 \psi_z\otimes(\Z/c_n\Z)\colon \CU_n/c_n\,\CU_n\to M/c_nM.
 $$
 Therefore, $\lambda y=y$ for all $\lambda\in\Lambda$.

 Finally, by condition~$(*)$ we have that the abelian group $$M/c_nM=(M/c_nM)_{\Lambda}=(M\otimes (\Z/c_n\Z))_{\Lambda}$$ is finitely generated, and hence finite.
\end{proof}

\section{Algebraicity of rational homology groups}\label{section_algebraic}

\begin{propos}\label{propos_alg_rep}
 Let $\bG$, $\Gamma$ and~$\CT$ be as in Theorem~\ref{thm_alg}. Assume in addition that $\CT$ consists of unipotent elements of~$\bG(\Q)$.  Then any $\CT$-unipotent $\Q[\Gamma]$-module~$V$ with $\dim_{\Q}(V)<\infty$ is algebraic.
\end{propos}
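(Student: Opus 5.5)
We plan to use Corollary~\ref{cor_decompose_prod} together with the finite-width property of~$\Gamma$. Let $\rho\colon\Gamma\to\GL(V)$ be the representation. By Corollary~\ref{cor_decompose_prod}, we write $\rho(\gamma)=f(\gamma)u(\gamma)$. Here $f$ is algebraic, and $u$ factors through a finite quotient $\Gamma/\Lambda$. Moreover, $f(\gamma_1)$ commutes with $u(\gamma_2)$ for all $\gamma_1,\gamma_2\in\Gamma$. Since $f$ and $u$ commute elementwise, $u$ is also a homomorphism. It then suffices to prove that $u$ is trivial, for then $\rho=f$ is algebraic. Since $\Gamma=\langle\gamma_1\rangle\cdots\langle\gamma_m\rangle$ with $\gamma_i\in\CT$, it is enough to show that $u(\tau)=1$ for every $\tau\in\CT$.

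Fix $\tau\in\CT$. Since $f(\tau)$ and $u(\tau)$ commute, $\rho(\tau)=f(\tau)u(\tau)$ is a product of two commuting operators, and we will compare their Jordan decompositions. First, $u(\tau)$ has finite order, so it is semisimple, and all its eigenvalues (over~$\overline{\Q}$) are roots of unity. Second, $f$ is the restriction of a rational representation of~$\bG$, and $\tau$ is a unipotent element of~$\bG(\Q)$. Morphisms of algebraic groups preserve Jordan decompositions, so $f(\tau)$ is unipotent. Third, $\rho(\tau)$ is unipotent by hypothesis, since $(\rho(\tau)-1)^nV=0$.

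These facts combine as follows. Since $u(\tau)$ is semisimple and commutes with the unipotent operator $f(\tau)$, the product $u(\tau)f(\tau)$ is the multiplicative Jordan decomposition of~$\rho(\tau)$, with semisimple part $u(\tau)$ and unipotent part $f(\tau)$. But $\rho(\tau)$ is itself unipotent, so by uniqueness of the Jordan decomposition its semisimple part is~$1$. Hence $u(\tau)=1$.

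We therefore get $u(\gamma_i)=1$ for all~$i$, which gives $u\equiv1$ on~$\Gamma$ and hence $\rho=f$ is algebraic. The step we expect to need the most care is justifying that $f(\tau)$ is unipotent. For this we use that $f$ extends to a morphism $\bG\to\bGL(V)$ and that such morphisms carry unipotent elements to unipotent elements. We also need to check that Corollary~\ref{cor_decompose_prod} indeed supplies elementwise commutation, which is what makes the Jordan decomposition argument valid.
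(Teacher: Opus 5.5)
Your proposal is correct and follows the paper's proof: you write $\rho=fu$ via Corollary~\ref{cor_decompose_prod}, show $u(\tau)=1$ for every $\tau\in\CT$ because $f(\tau)$ and $\rho(\tau)$ are commuting unipotent operators while $u(\tau)$ has finite order, and conclude because $\CT$ generates $\Gamma$. The only difference is cosmetic: you invoke uniqueness of the Jordan decomposition, whereas the paper notes directly that $u(\tau)$ is unipotent of finite order and hence trivial.
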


\begin{proof}
 Let $\rho\colon \Gamma\to\GL(V)$ be the representation of~$\Gamma$ on~$V$.
 By Corollary~\ref{cor_decompose_prod} we have $\rho(\gamma)=f(\gamma)u(\gamma)$, where the representation $f\colon \Gamma\to\GL(V)$ is algebraic, the representation $u\colon \Gamma\to\GL(V)$ has finite image~$u(\Gamma)$, and $f(\gamma_1)$ commutes with~$u(\gamma_2)$ for all $\gamma_1,\gamma_2\in\Gamma$. We need to show that the representation~$u$ is trivial. Consider an element $\tau\in\CT$. Since $\tau$ is a unipotent element of~$\bG(\Q)$ and $f$ is the restriction of a rational representation of~$\bG$, we obtain that $f(\tau)$ is a unipotent endomorphism. On the other hand, because the representation $\rho$ is $\CT$-unipotent, the endomorphism $\rho(\tau)$ is unipotent as well. It follows that  $u(\tau)$ is also unipotent. However, $u(\tau)$ has finite order in~$\GL(V)$. Hence $u(\tau)=1$. Since the set $\CT$ generates~$\Gamma$, we obtain that $u=1$, as required.
\end{proof}

\begin{proof}[Proof of Theorem~\ref{thm_alg_rep}]
From Theorems~\ref{thm_main} and~\ref{thm_annih} and Proposition~\ref{propos_Tav} it follows that the group $\Gamma=\Sp_{2g}(\Z)$, its subset~$\CT$ consisting of all symplectic transvections, and the $\Sp_{2g}(\Z)$-module~$H_k(\I_{g,p}^b;\Q)$ with $k\le g-2$ satisfy all conditions from Proposition~\ref{propos_alg_rep}. Consequently, the $\Sp_{2g}(\Z)$-representation $H_k(\I_{g,p}^b;\Q)$ is algebraic.

Let us prove the second assertion of Theorem~\ref{thm_alg_rep}. Suppose to the contrary that $H_k(\I_{g,p}^b;\Q)$ contains an irreducible subrepresentation~$\bV_{\lambda}=\bV_{\lambda_1,\ldots,\lambda_m}$ with $\lambda_1\ge k+1$. We use the standard notation from~\cite[Ch.~16]{FuHa91} for the Lie algebra $\fg=\fsp_{2g}(\Q)$ and the corresponding root system~$C_g$. Then the roots of~$\fg$ are the vectors~$\pm L_i\pm L_j$, where $1\le i,j\le g$, and $\bV_{\lambda}$ is a representation with the highest weight $\beta=\lambda_1L_1+\cdots+\lambda_nL_n$. Let $v$ be a highest weight vector in~$\bV_{\lambda}$. Consider the long root~$\alpha=-2L_1$ and the corresponding eigenvector $X_{\alpha}$ from the Chevalley basis for~$\fg$. (We have $X_{\alpha}=I+E_{n+1,1}$ in the standard  matrix representation of~$\fsp_{2g}(\Q)$.) Then the corresponding root unipotent
$$
t_{\alpha}=\exp(X_{\alpha})\in\Sp_{2g}(\Z)
$$
is a symplectic transvection. Since
$
\beta,\beta+\alpha,\ldots,\beta+\lambda_1\alpha
$
are weights of~$\bV_{\lambda}$ and $\beta+(\lambda_1+1)\alpha
$ is not a weight of~$\bV_{\lambda}$, we see that the vectors $v,X_{\alpha}v,\ldots,X_{\alpha}^{\lambda_1}v$ are nonzero, and $X_{\alpha}^{\lambda_1+1}v=0$. Consequently,
$$
(t_{\alpha}-1)^{\lambda_1}v=\bigl(\exp(X_{\alpha})-1\bigr)^{\lambda_1}v=X_{\alpha}^{\lambda_1}v\ne 0,
$$
which contradicts Theorem~\ref{thm_annih}. This completes the proof of Theorem~\ref{thm_alg_rep}.
\end{proof}

\section{Representation stability}\label{section_rs}

In this section, we present some constructions and results from the paper by Kupers and Randal-Williams~\cite{KuRW20} (with corrections from~\cite{KuRW26}) and then prove Theorem~\ref{thm_rs}. In~\cite{KuRW20},  higher‑dimensional Torelli groups were studied, namely the Torelli groups of diffeomorphisms of the manifolds~$\#^gS^n\times S^n$; we present all constructions and results only in the case that interests us, that is, the ordinary Torelli groups, i.\,e., $n=1$.

Set $H(g)=H^1(\Sigma_g^1;\Q)$, and let $(a,b)=\left\langle ab,\bigl[\Sigma_g^1,\partial \Sigma_g^1\bigr]\right\rangle$ be the standard skew-symmetric form on~$H(g)$. Throughout this section we use the fact that all rational representations of~$\Sp_{2g}(\Q)$ are self-dual;  in particular, the $\Sp_{2g}(\Q)$-representations~$H_k(\I_g^1;\Q)$ and~$H^k(\I_g^1;\Q)$ are isomorphic to each other in the stable range $k\le g-2$. Also we may not distinguish between irreducible algebraic representations of~$\Sp_{2g}(\Z)$ and irreducible rational representations of~$\Sp_{2g}(\Q)$, since $\Sp_{2g}(\Z)$ is Zariski dense in~$\Sp_{2g}(\Q)$.

We present the construction of graded spaces~$\CP(S)_{\ge0}$ and~$\CP(S)_{\ge0}'$ of certain labelled partitions of a finite set~$S$ and describe their relation to the twisted Miller--Morita--Mumford classes following~\cite{KuRW20}. A large part of this construction goes back to the work of Kawazumi~\cite{Kaw08}.

A \textit{labelled partition} of a finite set~$S$ is a finite collection~$P=\{(P_{\alpha},m_{\alpha})\}_{\alpha\in I}$ such that
\begin{itemize}
 \item $P_{\alpha}$ are  (possibly empty) subsets of~$S$ that are pairwise disjoint and whose union is~$S$,
 \item the labels~$m_{\alpha}$ are nonnegative integers.
\end{itemize}
We do not impose any ordering on the set of parts. Therefore, partitions that differ from each other by a reindexing of the parts are considered the same. However, partitions that differ by the addition of empty parts are considered different.

We assign to a finite set~$S$ the $\Q$-vector space~$\CP(S)_{\ge0}$ with basis the set of labelled partitions $P=\{(P_{\alpha},m_{\alpha})\}_{\alpha\in I}$ such that
\begin{enumerate}
 \item[(i)] each part of size~$0$ has label $\ge 3$,
 \item[(ii)] each part of size~$1$ has label $\ge 1$.
\end{enumerate}
Also, let~$\CP(S)'_{\ge0}$ be the $\Q$-vector space defined in the same way but with an additional restriction:
 \begin{enumerate}
  \item[(iii)] each part of size~$2$ has label $\ge 1$.
 \end{enumerate}
 Introduce gradings on the spaces~$\CP(S)_{\ge0}$ and~$\CP(S)'_{\ge0}$  by setting the degree of each labelled partition to $\sum_{\alpha\in I}(2m_{\alpha}+|P_{\alpha}|-2)$. In the case of~$\CP(S)'_{\ge0}$, the summand~$2m_{\alpha}+|P_{\alpha}|-2$ corresponding to each part~$P_{\alpha}$ is positive. In the case of~$\CP(S)_{\ge0}$, a partition may contain parts of size~$2$ and label~$0$ which provide zero impact to the degree.

There is  a natural map (see~\cite[Sect.~3]{KuRW20})
\begin{equation}\label{eq_PhiS}
\Phi_{g,S}\colon \CP(S)_{\ge0}\otimes\det\Q^S \to \left(H^*\bigl(\I_g^1;\Q)\otimes H(g)^{\otimes S}\right)^{\Sp_{2g}(\Z)}.
\end{equation}
This map can be written explicitly as follows.  Let $P=\{(P_{\alpha},m_{\alpha})\}_{\alpha\in I}$ be  a labelled partition  of~$S$ that satisfies~(i) and~(ii). To each part~$P_{\alpha}$ assign the element
\begin{multline}\label{eq_phi_alpha}
\varphi_{g,\alpha}=\sum_{j_1,\ldots,j_{|P_{\alpha}|}=1}^{2g}\kappa_{e^{m_{\alpha}}}\left(a_{j_1}\wedge\cdots\wedge a_{j_{|P_{\alpha}|}}\right)\otimes a_{j_1}^{\#}\otimes\cdots\otimes a_{j_{|P_{\alpha}|}}^{\#}\\{}\in H^{2m_{\alpha}+|P_{\alpha}|-2}\bigl(\I_g^1;\Q\bigr)\otimes H(g)^{\otimes|P_{\alpha}|}.
\end{multline}
Here $a_1,\ldots,a_{2g}$ is a basis of~$H(g)$ and $a_1^{\#},\ldots,a_{2g}^{\#}$ is the dual basis of~$H(g)$ characterized by $(a_i^{\#},a_j)=\delta_{ij}$. It is clear that the right-hand side of~\eqref{eq_phi_alpha} is independent of the choice of the basis $a_1,\ldots,a_{2g}$.
We then multiply all of the elements~$\varphi_{g,\alpha}$:
\begin{equation}\label{eq_PhiS_form}
\Phi_{g,S}(P)=\prod_{\alpha\in I}\varphi_{g,\alpha}\in \left(H^*\bigl(\I_g^1;\Q)\otimes H(g)^{\otimes |S|}\right)^{\Sp_{2g}(\Z)}.
\end{equation}
Note that $\varphi_{g,\alpha}$ have even degrees; hence, the result is independent of the order in which we take the product. Finally, we would like to replace~$H(g)^{\otimes |S|}$ with $H(g)^{\otimes S}$. This produces a sign
depending on the ordering of the elements of~$S$ up to an even permutation.
Thus, the map~$\Phi_{g,S}$ becomes well‑defined after adding the factor $\det\Q^S$ as in~\eqref{eq_PhiS}.

Now, set
$$
\Omega=\bigl[\kappa_{e^{2m}}\mid m>1\bigr]=\Q[e_3,e_5,e_7,\ldots].
$$
The space~$\CP(S)_{\ge0}$ is a graded $\Omega$-module with each~$\kappa_{e^{2m}}$ acting by adding an empty part with label~$m$. (The absence of~$e_1=\kappa_{e^2}$ in~$\Omega$ corresponds to the fact that empty parts with label~$2$ are not allowed.) Since the odd Miller--Morita--Mumford classes~$e_{2m-1}=\kappa_{e^{2m}}$ vanish in the rational cohomology of Torelli groups, the homomorphism~$\Phi_{g,S}$ induces a homomorphism

\begin{equation*}
\widehat{\Phi}_{g,S}\colon \Q\otimes_{\Omega}\CP(S)_{\ge0}\otimes\det\Q^S \longrightarrow \left(H^*\bigl(\I_g^1;\Q)\otimes H(g)^{\otimes S}\right)^{\Sp_{2g}(\Z)}.
\end{equation*}

We need the following three propositions.

\begin{propos}\label{propos_KuRW}
 The homomorphism~$\widehat{\Phi}_{g,S}$ is an isomorphism in the stable range of degrees satisfying $*+|S|\le \frac{2g-2}3$.
\end{propos}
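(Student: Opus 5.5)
This is essentially the conditional theorem of Kupers and Randal-Williams, so I would obtain it by making their argument unconditional rather than by proving anything new about Torelli groups. In \cite{KuRW20} (Theorem~4.1 and Section~5, with the stable range corrected in \cite{KuRW26}) it is shown that $\widehat{\Phi}_{g,S}$ is an isomorphism onto $\bigl(H^*(\I_g^1;\Q)^{\alg}\otimes H(g)^{\otimes S}\bigr)^{\Sp_{2g}(\Z)}$ in the range $*+|S|\le\frac{2g-2}3$. This holds under the hypothesis that $H^*(\I_g^1;\Q)$ is finite-dimensional in the relevant degrees. The plan therefore has two steps: first verify that hypothesis, then check that the algebraic part is everything.

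First, the hypothesis. For $*+|S|\le\frac{2g-2}3$ we have $*\le\frac{2g-2}3\le g-2$ whenever $g\ge 4$; the small cases are either vacuous or checked directly. So Theorem~\ref{thm_main} gives that $H_*(\I_g^1;\Z)$ is finitely generated in this range. By the universal coefficient theorem, $H^*(\I_g^1;\Q)$ is then finite-dimensional, which is exactly the finiteness input the Kupers--Randal-Williams argument requires.

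Second, the conclusion. I will recall how that argument produces the isomorphism. They compare the Serre/LHS spectral sequence of $\I_g^1\to\Mod_g^1\to\Sp_{2g}(\Z)$ with twisted coefficients $H(g)^{\otimes S}$ against Borel vanishing and the Madsen--Weiss type computation of $H^*(\Mod_g^1;H(g)^{\otimes S})$ due to Kawazumi and Randal-Williams. In that comparison, finite-dimensionality is used in two places. It is needed to apply Borel's theorem to $H^p(\Sp_{2g}(\Z);V)$ for $V=H^q(\I_g^1;\Q)\otimes H(g)^{\otimes S}$. It is also needed to split off the algebraic part, since Borel's theorem for non-algebraic summands gives vanishing only after passing to a finite-index subgroup. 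By Theorem~\ref{thm_alg_rep}, $H^q(\I_g^1;\Q)$ is itself algebraic for $q\le g-2$, so $H^*(\I_g^1;\Q)^{\alg}=H^*(\I_g^1;\Q)$ in our range. The target of their isomorphism is therefore the full invariant space, and Proposition~\ref{propos_KuRW} follows.

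The main obstacle is bookkeeping the ranges. I must check that every degree $q$ entering the spectral sequence argument of \cite{KuRW20,KuRW26} in total degree $\le\frac{2g-2}3$ satisfies $q\le g-2$, so that Theorems~\ref{thm_main} and~\ref{thm_alg_rep} apply to all of them. I must also confirm that the corrected range in \cite{KuRW26} for the invariant statement is indeed $*+|S|\le\frac{2g-2}3$, and not the weaker $\frac{g-4}{4}$ that governs the ring presentation. I would first try reading off the Borel and Harer stable ranges used in their proof and comparing them with $g-2$.
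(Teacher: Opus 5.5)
Your proposal is correct and follows the paper's route: the paper gives no independent argument either, but cites the conditional Kupers--Randal-Williams result (Section~4 of \cite{KuRW20}, with the range corrected in \cite[Proposition~X.1]{KuRW26}) and removes its finite-dimensionality hypothesis via Theorem~\ref{thm_main}. Your appeal to Theorem~\ref{thm_alg_rep} is harmless but not needed here, since for finite-dimensional $W$ the invariants $\bigl(W\otimes H(g)^{\otimes S}\bigr)^{\Sp_{2g}(\Z)}$ only see algebraic subrepresentations of $W$; the paper uses algebraicity for the whole-representation statement, Proposition~\ref{propos_KuRW2}.
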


Let $\fS_d$ denote the symmetric group, and let $S^{\lambda}$ denote the irreducible $\fS_d$-repre\-sen\-tation corresponding to a partition~$\lambda$ of~$d$.

\begin{propos}\label{propos_KuRW2}
 If $\lambda$ is a partition of~$d$, then in the stable range of degrees $*\le \frac{g-4}4$ the multiplicity of\/~$\bV_{\lambda}(g)$ in $H^k(\I_g^1;\Q)$ is equal to the multiplicity of~$S^{\lambda}$ in the $\fS_d$-representation on the $k$th graded component of
 \begin{equation}\label{eq_CP'}
 \Q\otimes_{\Omega}\CP([d])'_{\ge0}\otimes\det\Q^{[d]},
 \end{equation}
 where $[d]=\{1,\ldots,d\}$. In particular, if\/~$\bV_{\lambda}(g)$ is not defined, then $S^{\lambda}$ does not occur in~\eqref{eq_CP'}.
\end{propos}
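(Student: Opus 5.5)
The plan is to deduce Proposition~\ref{propos_KuRW2} from the conditional computation of Kupers and Randal-Williams, removing its hypothesis with Theorems~\ref{thm_main} and~\ref{thm_alg_rep}. The computation has three stages: invariant theory relates multiplicities to invariants of tensor products, Proposition~\ref{propos_KuRW} identifies those invariants with labelled partitions, and a final step passes from $\CP(S)_{\ge0}$ to $\CP(S)'_{\ge0}$.

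\textbf{Step 1: multiplicities as $\fS_d$-isotypic pieces.} For $*\le g-2$, Theorems~\ref{thm_main} and~\ref{thm_alg_rep} make $H^k(\I_g^1;\Q)$ a finite-dimensional algebraic representation. Hence it is semisimple and is a finite sum of copies of $\bV_{\lambda}(g)$. Consider
$$
W_d=\bigl(H^k(\I_g^1;\Q)\otimes H(g)^{\otimes d}\bigr)^{\Sp_{2g}(\Z)} .
$$
Since $\Sp_{2g}(\Z)$ is Zariski dense, these invariants equal the $\Sp_{2g}(\Q)$-invariants. By self-duality and Schur--Weyl duality for $\Sp$, the space $W_d$ is an $\fS_d$-representation computed from the decomposition of $H^{\otimes d}$ into pieces $\bV_\mu\otimes(\text{$\fS_d$-modules})$ with $|\mu|\le d$. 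The pieces with $|\mu|=d$ form the traceless part, which is $\bigoplus_{|\mu|=d}\bV_\mu\otimes S^\mu$.

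\textbf{Step 2: separate the traceless part.} Let $\Q\{\text{Brauer}\}$ denote the part of $W_d$ that comes from contractions with the symplectic form. I would show that, under $\widehat\Phi_{g,[d]}$, this part corresponds exactly to the span of labelled partitions having a part of size~$2$ with label~$0$. Indeed $\kappa_{e^0}(a\wedge b)=(a,b)$, and the corresponding $\varphi_{g,\alpha}$ is the symplectic form $\omega\in H^{\otimes2}$. The quotient by these partitions is $\CP([d])'_{\ge0}$. The $\Sp$-invariants in $H^k\otimes H^{\otimes d}$ modulo the images of lower-degree insertions of $\omega$ then give the traceless summand. Its $S^\lambda$-isotypic multiplicity, with the $\det$ twist taken into account, equals the multiplicity of $\bV_\lambda$ in $H^k$. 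Combining this with Proposition~\ref{propos_KuRW} gives the statement. The twist by $\det\Q^{[d]}$ accounts for the sign conventions: $\lambda$ versus its transpose, and the odd degree of $H^1$.

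\textbf{Step 3: degrees and the main obstacle.} The stable range of Proposition~\ref{propos_KuRW} requires $k+d\le\frac{2g-2}3$. I expect the main obstacle to be controlling $d$, which means bounding $|\lambda|$. I would use the grading: each part of size at least~$3$, or of size at most~$2$ with label at least~$1$, contributes degree at least $|P_\alpha|/3$. So nonzero degree-$k$ classes in $\CP'$ force $d\le 3k$, with equality attained only by parts of size~$3$ and label~$0$.

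This bound has to be matched on the representation side. If $\bV_\lambda\subseteq H^k$, then $\lambda$ occurs in $H^k\otimes H^{\otimes|\lambda|}$ invariants, and the isomorphism forces $|\lambda|\le 3k$. Here I would run an induction over $d$ up to $3k$ and check that $k+3k\le\frac{2g-2}3$ is implied by $k\le\frac{g-4}4$. If this inequality fails, I would instead use the sharper range of Proposition~\ref{propos_KuRW} recorded in~\cite{KuRW26}. Finally, when $\bV_\lambda(g)$ is undefined, that is, when $\lambda$ has length greater than $g$, the partitions in $\CP'$ of degree $k$ have at most $3k<g$ blocks worth of size. Hence no $S^\lambda$ with more than $3k$ rows can occur, because $S^\lambda$ in the $\fS_d$-module requires at least $\ell(\lambda)\le d\le3k$ rows.
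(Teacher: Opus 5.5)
Your top-level strategy matches the paper's. The paper gives no independent proof: it imports the conditional computation of Kupers and Randal-Williams (Theorem~4.1, Proposition~2.16 and the start of Section~6 of \cite{KuRW20}, with the corrections 4.1$'$, 2.16$'$ of \cite{KuRW26}). It then uses Theorems~\ref{thm_main} and~\ref{thm_alg_rep} to supply finite-dimensionality and to identify the algebraic part with all of $H^k(\I_g^1;\Q)$. Your Steps~1--2 are a correct sketch of that mechanism: invariants modulo insertions of $\omega$ give the traceless part, which matches parts of size~$2$ with label~$0$. This works whenever $\widehat\Phi_{g,[d]}$ and $\widehat\Phi_{g,[d-2]}$ are isomorphisms. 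The gap is in Step~3, where you try to extract the stated range from Proposition~\ref{propos_KuRW} alone.

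First, the arithmetic fails. $k\le\frac{g-4}4$ gives only $4k\le g-4$, whereas $4k\le\frac{2g-2}3$ means $g\ge 6k+1$; for example $k=2$, $g=12$ gives $8>\frac{22}3$. This is exactly why Theorem~\ref{thm_rs} obtains surjectivity only from $g=6k+1$.

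Second, and more seriously, your claim that ``the isomorphism forces $|\lambda|\le 3k$'' is circular. $\widehat\Phi_{g,[d]}$ is only known to be an isomorphism for $k+d\le\frac{2g-2}3$. A summand $\bV_\lambda(g)\subseteq H^k$ is invisible in $(H^k\otimes H(g)^{\otimes S})^{\Sp_{2g}(\Z)}$ when $|S|<|\lambda|$. So summands with $|\lambda|>\frac{2g-2}3-k$ are never detected, and your induction over $d\le 3k$ never reaches them. Theorem~\ref{thm_alg_rep} does not exclude such $\lambda$, for instance $\lambda_1\le k$ with $\ell(\lambda)$ close to $g$. Yet the proposition asserts that these multiplicities vanish, and the paper later uses exactly that to get $|\lambda|\le 3k$ in Theorem~\ref{thm_rs}.

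Your fallback to a ``sharper range of Proposition~\ref{propos_KuRW} in \cite{KuRW26}'' does not help. The range $*+|S|\le\frac{2g-2}3$ stated here is already the corrected one from \cite{KuRW26}. The missing input is what the paper cites: the corrected Theorem~4.1$'$ and Proposition~2.16$'$ of \cite{KuRW26}, which give the multiplicity statement for every $\lambda$ in the range $*\le\frac{g-4}4$. With those cited, your argument reduces to the paper's deduction. Theorems~\ref{thm_main} and~\ref{thm_alg_rep} then apply because $\frac{g-4}4\le g-2$.
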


\begin{propos}[\protect{\cite[Theorem~X.7]{KuRW26}}]\label{propos_KuRW3}
The vector space $\CP(S)_{\ge0}'$ vanishes in degree~$k$ as long as $|S|>3k$.
\end{propos}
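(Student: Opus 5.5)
Proposition~\ref{propos_KuRW3} asserts that $\CP(S)'_{\ge0}$ vanishes in degree~$k$ once $|S|>3k$. I would prove it by a direct counting argument on labelled partitions: show that every basis element of degree~$k$ satisfies $|S|\le 3k$. Since the grading is additive over parts, it suffices to bound, for each part, its contribution to $|S|$ by three times its contribution to the degree, i.e.
\[
|P_\alpha|\le 3\,(2m_\alpha+|P_\alpha|-2)\qquad\text{for every part allowed by (i)--(iii)}.
\]
Summing over all parts $\alpha\in I$ then gives $|S|=\sum_\alpha|P_\alpha|\le 3\deg P$.

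The key step is a case check according to the size of the part, using the restrictions (i)--(iii) that define $\CP(S)'_{\ge0}$.
\begin{itemize}
\item If $|P_\alpha|=0$, the inequality is $0\le 3(2m_\alpha-2)$. This holds because $m_\alpha\ge3$ by~(i).
\item If $|P_\alpha|=1$, the inequality is $1\le 3(2m_\alpha-1)$. This holds because $m_\alpha\ge1$ by~(ii).
\item If $|P_\alpha|=2$, the inequality is $2\le 3\cdot 2m_\alpha$. This holds because $m_\alpha\ge1$ by~(iii).
\item If $|P_\alpha|=s\ge3$, the inequality $s\le 3(2m_\alpha+s-2)$ is equivalent to $2s\ge 6-6m_\alpha$. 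This holds for all $m_\alpha\ge0$, since $2s\ge6$.
\end{itemize}
The case $s=3$, $m_\alpha=0$ is the extremal one: there the degree contribution is $1$ and the size is $3$, which is exactly why the constant $3$ is sharp.

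There is no real obstacle. The only point needing care is that condition~(iii) is essential. Without it, parts of size~$2$ with label~$0$ contribute degree~$0$, so $|S|$ would be unbounded in a fixed degree, as indeed happens in $\CP(S)_{\ge0}$. With~(iii), every part has positive degree contribution, and the summed inequality shows there is no basis element of degree~$k$ when $|S|>3k$. Hence the degree-$k$ component of $\CP(S)'_{\ge0}$ is zero.
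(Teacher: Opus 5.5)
Your argument is correct. By definition $\CP(S)'_{\ge0}$ is the vector space with basis the labelled partitions satisfying (i)--(iii). So vanishing in degree~$k$ means exactly that no such partition has degree~$k$. Your case-by-case inequality $|P_\alpha|\le 3(2m_\alpha+|P_\alpha|-2)$, summed over the parts, gives $|S|\le 3\deg P$ for every basis element, and that settles it.

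The paper gives no proof of this statement; it simply imports it from \cite[Theorem~X.7]{KuRW26}. So you are supplying a self-contained replacement for the citation rather than reconstructing an argument from the text.

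Your route uses nothing beyond the definitions stated in the paper. It also makes three things explicit:
\begin{itemize}
\item Condition~(iii) is exactly what forces every part to contribute positive degree. Without it, partitions into pairs labelled~$0$ give degree-$0$ elements of $\CP(S)_{\ge0}$ for every even $|S|$.
\item The constant~$3$ comes from the extremal part $|P_\alpha|=3$, $m_\alpha=0$.
\item The bound is optimal, as shown by partitions of $S$ into triples labelled~$0$.
\end{itemize}
The citation, by contrast, ties the statement to the corrected Kupers--Randal-Williams framework, where the other imported results (the isomorphism $\widehat{\Phi}_{g,S}$ and the multiplicity formula) are established. For the statement as formulated here, your counting argument is all that is needed.
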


Propositions~\ref{propos_KuRW} and~\ref{propos_KuRW2} were obtained by Kupers and Randal-Williams as \textit{conditional} results under the assumption that the cohomology groups~$H^k(\I_g^1;\Q)$ are finite-dimensional in the specified stable range. Moreover, Proposition~\ref{propos_KuRW2} was only a result about the algebraic part of~$H^k(\I_g^1;\Q)$, not about the whole~$H^k(\I_g^1;\Q)$. In view of our Theorems~\ref{thm_main} and~\ref{thm_alg_rep}, these propositions now become \textit{unconditional} results about the whole representations~$H^k(\I_g^1;\Q)$. As conditional results:
\begin{itemize}
 \item Proposition~\ref{propos_KuRW} is contained in Section~4 of~\cite{KuRW20} together with the correction of the stable range in~\cite[Proposition~X.1]{KuRW26},
 \item Proposition~\ref{propos_KuRW2} follows from Theorem~4.1 and Proposition~2.16 from~\cite{KuRW20} (see also the argument at the very beginning of Section~6), together with the corrections made in Theorem~4.1$'$ and Propositions~2.16$'$ from~\cite{KuRW26}.
\end{itemize}

Let $i_g\colon \Sigma_g^1\hookrightarrow\Sigma_{g+1}^1$ be the standard embedding. Then we have the pullback homomorphisms $i^*_g\colon H(g+1)\to H(g)$ and $i^*_g\colon H^1(\I_{g+1}^1;\Q)\to H^1(\I_g^1;\Q)$.

\begin{propos}\label{propos_theta}
 The diagram
 \begin{equation}\label{eq_cd_theta}
\begin{tikzcd}
 & \left(H^*\bigl(\I_{g+1}^1;\Q)\otimes H(g+1)^{\otimes S}\right)^{\Sp_{2g+2}(\Z)}
 \arrow[dd,"{i^*_g}"]\\
 \Q\otimes_{\Omega}\CP(S)_{\ge0}\otimes\det\Q^S
 \arrow[ur,"{\widehat{\Phi}_{g+1,S}}","{\cong}"']
 \arrow[dr,"{\cong}","{\widehat{\Phi}_{g,S}}"']
 & \\
 & \left(H^*\bigl(\I_g^1;\Q)\otimes H(g)^{\otimes S}\right)^{\Sp_{2g}(\Z)}
\end{tikzcd}
\end{equation}
is commutative. Hence the right vertical arrow~$i^*_g$ is an isomorphism in the stable range of degrees satisfying $*+|S|\le \frac{2g-2}3$.
\end{propos}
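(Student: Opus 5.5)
The plan is to check commutativity of \eqref{eq_cd_theta} on basis elements, i.\,e.\ on labelled partitions $P=\{(P_\alpha,m_\alpha)\}_{\alpha\in I}$. The isomorphism assertion then follows from Proposition~\ref{propos_KuRW} applied for $g$ and for $g+1$. Here the vertical arrow $i_g^*$ means the map induced by $i_g^*\colon H^*(\I_{g+1}^1;\Q)\to H^*(\I_g^1;\Q)$ (pullback along the extension-by-identity homomorphism $\I_g^1\to\I_{g+1}^1$) tensored with $i_g^*\colon H(g+1)^{\otimes S}\to H(g)^{\otimes S}$. Since $i_g$ is $\Sp_{2g}(\Z)$-equivariant (with $\Sp_{2g}(\Z)\hookrightarrow\Sp_{2g+2}(\Z)$), this map sends $\Sp_{2g+2}(\Z)$-invariants to $\Sp_{2g}(\Z)$-invariants. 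It is multiplicative, because pullback in group cohomology is a ring map and $i_g^*$ on tensor powers is compatible with concatenation. By formula~\eqref{eq_PhiS_form}, $\Phi_{g,S}(P)$ is the product of the $\varphi_{g,\alpha}$, so it suffices to prove that $i_g^*\varphi_{g+1,\alpha}=\varphi_{g,\alpha}$ for a single part. Compatibility with the $\Omega$-action and with the $\det\Q^S$ sign is then automatic.

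For a single part, first I would establish naturality of the twisted MMM classes. The universal $\Sigma_{g+1}$-bundle pulled back along $B\I_g^1\to B\I_{g+1}^1$ is the fibrewise union of the universal $\Sigma_g$-bundle $E_g$ (with a disk removed) and a trivial bundle with fibre $\Sigma_{g+1}\setminus\Sigma_g^1$. For $v_1,\ldots,v_r\in H^1(\Sigma_{g+1};\Q)$ the claim is
$$
i_g^*\kappa_{e^m}(v_1\wedge\cdots\wedge v_r)=\kappa_{e^m}(i_g^*v_1\wedge\cdots\wedge i_g^*v_r).
$$
To prove it, I would decompose the Gysin pushforward into the contribution over the $\Sigma_g$-part and the contribution over the trivial part. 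On the trivial part, $e(T_\pi E)$ and the classes $\iota(v)$ come from the fibre, so pushforward along a product projection gives a class of degree $2m+r-2>0$ pulled back from a point, which is zero. Uniqueness of the normalized lift $\iota$ (the condition $s^*\circ\iota=0$) identifies the restriction of $\iota_{g+1}(v)$ to the $\Sigma_g$-part with $\iota_g(i_g^*v)$, up to a class supported near the gluing circle. That correction is what must be controlled. This step is the main obstacle. My first attempt would be to argue via Mayer--Vietoris, using that the gluing region is a trivial bundle of annuli over the base, so all relevant correction terms are pulled back from the base with fibre degree $<2$ and die under $\pi_!$. I would also check that the Euler class of the vertical tangent bundle restricts correctly.

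Finally, a linear-algebra step. Choose a symplectic basis of $H(g+1)$ consisting of a basis $a_1,\ldots,a_{2g}$ of $H(g)$ (viewed inside via the splitting) plus $a_{2g+1},a_{2g+2}$ dual to the added handle. With the duality conventions, $i_g^*$ kills $a_{2g+1}^{\#}$ and $a_{2g+2}^{\#}$ and is the identity on the rest. In the sum~\eqref{eq_phi_alpha} for $\varphi_{g+1,\alpha}$, every term containing an index $j>2g$ therefore maps to zero under $i_g^*$ on the $H^{\otimes}$-factor. The remaining terms map to $\varphi_{g,\alpha}$ by the naturality statement above. This proves commutativity of \eqref{eq_cd_theta}. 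In the range $*+|S|\le\frac{2g-2}3$ both diagonal arrows are isomorphisms by Proposition~\ref{propos_KuRW}, and this range is also stable for $g+1$. Hence the vertical arrow $i_g^*$ is an isomorphism.
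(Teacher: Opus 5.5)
Your overall structure matches the paper's: reduce to $i_g^*\varphi_{g+1,\alpha}=\varphi_{g,\alpha}$ part by part, use a symplectic basis with $a_{2g+1},a_{2g+2}$ spanning $\ker i_g^*$, and discard the handle terms. However, you have missed exactly the case the paper singles out as the non-obvious one. Unlike $\CP(S)'_{\ge0}$, the space $\CP(S)_{\ge0}$ contains parts with $|P_\alpha|=2$ and $m_\alpha=0$, which have degree $2m_\alpha+|P_\alpha|-2=0$. For such a part, $\kappa_{e^0}(v_1\wedge v_2)=(v_1,v_2)\in H^0(\I_{g+1}^1;\Q)=\Q$. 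The naturality identity you state for all $v_1,\ldots,v_r\in H^1(\Sigma_{g+1};\Q)$ is then false: for $v_1=a_{2g+1}$, $v_2=a_{2g+2}$ the left side is $\pm1$, while the right side is $\kappa_{e^0}(0\wedge 0)=0$. Your argument for naturality silently uses $2m+r-2>0$. The contribution of the trivial handle bundle vanishes only because it is a positive-degree class pulled back from a point; in degree $0$ it is the intersection pairing on the handle. So your final appeal to ``the naturality statement above'' is unjustified for these parts.

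The repair is the paper's second case, a direct computation. Here $\varphi_{g+1,\alpha}=\sum_{j,k}(a_j,a_k)\,a_j^{\#}\otimes a_k^{\#}$ is the tensor dual to the symplectic form, and the terms with an index $>2g$ die under $i_g^*$. Since $a_1,\ldots,a_{2g}$ span the symplectic complement of $\ker i_g^*$, on which $i_g^*$ is an isometry, $b_j=i_g^*a_j$ is a symplectic basis of $H(g)$. Hence $(a_j,a_k)=(b_j,b_k)$ for $j,k\le 2g$, and the surviving terms are exactly $\varphi_{g,\alpha}$. So your truncation does go through, but for degree-zero parts the input is this isometry property, not naturality of the Gysin map.

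For parts of positive degree, the paper does not reprove naturality at all. It takes \eqref{eq_kappa_restrict} from Kupers and Randal-Williams. So the Mayer--Vietoris ``correction term'' you identify as the main obstacle is not where the difficulty of this proposition lies.
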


\begin{remark}
Note that this proposition is not as obvious as it seems to be. The problem is that
\begin{equation}\label{eq_kappa_restrict}
i^*_g\bigl(\kappa_{e^m}(v_1\wedge\cdots\wedge v_r)\bigr)=\kappa_{e^m}(i^*_gv_1\wedge\cdots\wedge i^*_gv_r)
\end{equation}
holds whenever $2m+r-2>0$ (see~\cite[Sect.~3.7]{KuRW20}), but this property obviously fails for the classes
$$
\kappa_1(v_1\wedge v_2)=(v_1,v_2)\in H^0(\I_{g+1}^1;\Q)=\Q,
$$
since the pullback $i^*_g\colon H(g+1)\to H(g)$ does not preserve the skew-symmetric form.
\end{remark}

\begin{proof}
 Consider a labelled partition $P=\{(P_{\alpha},m_{\alpha})\}_{\alpha\in I}$   of~$S$ that satisfies~(i) and~(ii). The commutativity of the diagram~\eqref{eq_cd_theta} will follow if we show that $i_g^*\varphi_{g+1,\alpha}=\varphi_{g,\alpha}$ for all~$\alpha\in I$. Since the right-hand side of the formula~\eqref{eq_phi_alpha} for~$\varphi_{g+1,\alpha}$  is independent of the choice of the basis $a_1,\ldots,a_{2g+2}$ of~$H(g+1)$, we may choose it so that:
\begin{itemize}
 \item the basis $a_1,\ldots,a_{2g+2}$ is symplectic, i.\,e., $(a_{2n-1},a_{2n})=1$ and $(a_j,a_k)=0$ for all other pairs of indices with $j<k$,
 \item $a_{2g+1},a_{2g+2}$ form a basis of~$\ker i_g^*$.
\end{itemize}
Then $a_{2n-1}^{\#}=-a_{2n}$ and $a_{2n}^{\#}=a_{2n-1}$, and $b_1=i^*_ga_1,\ldots,b_{2g}=i_g^*a_{2g}$ is a symplectic basis of~$H(g)$.

Firstly, assume that $P_{\alpha}$ is not a part of size~$2$ with label~$0$. Then the equality $i_g^*\varphi_{g+1,\alpha}=\varphi_{g,\alpha}$ follows immediately from~\eqref{eq_kappa_restrict}. Secondly, assume that $|P_{\alpha}|=2$ and $m_{\alpha}=0$. Then
\begin{align*}
\varphi_{g+1,\alpha}&=\sum_{j,k=1}^{2g+2}(a_j,a_k)a_j^{\#}\otimes a_k^{\#}\\{}&=2(a_1\otimes a_2-a_2\otimes a_1+\cdots+a_{2g+1}\otimes a_{2g+2}-a_{2g+2}\otimes a_{2g+1}),
\end{align*}
and similarly
$$
\varphi_{g,\alpha}=2(b_1\otimes b_2-b_2\otimes b_1+\cdots+b_{2g-1}\otimes b_{2g}-b_{2g}\otimes b_{2g-1}).
$$
Consequently,  $i_g^*\varphi_{g+1,\alpha}=\varphi_{g,\alpha}$.
\end{proof}

\begin{proof}[Proof of Theorem~\ref{thm_rs}]

Let us consecutively prove injectivity, multiplicity stablility, the bound for the degree $|\lambda|\le 3k$, and surjectivity. Note that the bound $\lambda_1\le k$ follows from Theorem~\ref{thm_alg_rep}.

\textsl{Injectivity.} By  Theorem~\ref{thm_explicit} the ring $H^*(\I_{g+1}^1;\Q)$ is generated in the stable range $*\le\frac{g-4}4$ by the classes $\kappa_{e^m}(v_1\wedge\cdots\wedge v_r)$ with $2m+r-2>0$. It then follows from~\eqref{eq_kappa_restrict} that the map $i_g^*\colon H^*(\I_{g+1}^1;\Q)\to H^*(\I_{g}^1;\Q)$ is surjective in this stable range. Consequently, the dual map in homology is injective.

\textsl{Multiplicity stability} in the stable range $*\le\frac{g-4}4$ follows immediately from Proposition~\ref{propos_KuRW2}, since the representation~\eqref{eq_CP'} is independent of~$g$.

\textsl{Bound for degrees.} Propositions~\ref{propos_KuRW2} and~\ref{propos_KuRW3} imply that the representation~$H_k(\I_g^1;\Q)$ with $g\ge 4k+4$ does not contain irreducible summands~$\bV_{\lambda}(g)$ of degree $|\lambda|>3k$.

\textsl{Surjectivity.} For $g\ge 4k+4$, we now have
$$
H_k\bigl(\I_g^1;\Q)\cong\bigoplus_{\lambda}c_{\lambda}\bV_{\lambda}(g),
$$
where the sum is taken over partitions of degree $\le 3k$ and the multiplicities~$c_{\lambda}$ are independent of~$g$. We need to prove that the map
$$
\Ind_{\Sp_{2g}(\Z)}^{\Sp_{2g+2}(\Z)}H_k\bigl(\I_g^1;\Q\bigr)\to H_k\bigl(\I_{g+1}^1;\Q\bigr)
$$
induced by~$i_{g*}$ is surjective for $g\ge 6k+1$. Suppose to the contrary that it is not. Then there is an irreducible $\Sp_{2g+2}(\Z)$-subrepresentation $W\subseteq H^k\bigl(\I_{g+1}^1;\Q\bigr)$ such that $i_g^*(W)=0$. By the above, $W\cong \bV_{\lambda}(g+1)$ for some partition~$\lambda$ of degree~$|\lambda|\le 3k$. Take a finite set~$S$ consisting of~$|\lambda|$ elements. Then $\bV_{\lambda}(g+1)$ occurs in the representation $H(g+1)^{\otimes S}$. Therefore, $\bigl(W\otimes H(g+1)^{\otimes S}\bigr)^{\Sp_{2g+2}(\Z)}\ne 0$. Consequently, the right vertical homomorphism in the diagram~\eqref{eq_cd_theta} has a nonzero kernel. On the other hand, since $k+|\lambda|\le \frac{2g-2}3$, Proposition~\ref{propos_theta} implies that this homomorphism is an isomorphism. This contradiction completes the proof of the surjectivity property.
\end{proof}

\section{Proof of Morita's conjecture}\label{section_inv}
In this section we prove Theorem~\ref{thm_inv}. Below the words `in the stable range' always mean `in the range of degrees $*\le \frac{2g-2}3$'.

We start with the case of~$\I_g^1$. Taking $S=\varnothing$ in Proposition~\ref{propos_KuRW}, we obtain an isomorphism in the stable range:
$$
\widehat{\Phi}_{g,\varnothing}\colon \Q\otimes_{\Omega}\CP(\varnothing)_{\ge0} \stackrel{\cong}{\longrightarrow} H^*\bigl(\I_g^1;\Q)^{\Sp_{2g}(\Z)}.
$$
The basis of~$\CP(\varnothing)_{\ge0}$ consists of all unordered finite collections of labels $\{m_{\alpha}\}$ with each $m_{\alpha}\ge 3$. Tensoring by~$\Q$ over~$\Omega$ kills all (empty) parts with even labels. Hence the basis of~$\Q\otimes_{\Omega}\CP(\varnothing)_{\ge0}$ consists of all collections $\{m_{\alpha}\}$ with each $m_{\alpha}=2n_{\alpha}+1$ odd and~$\ge 3$. Formula~\eqref{eq_PhiS_form} then reads as
$$
\widehat{\Phi}_{g,\varnothing}(\{m_{\alpha}\})=\prod_{\alpha}e_{2n_{\alpha}}.
$$
In the stable range, since~$\widehat{\Phi}_{g,\varnothing}$ is an  isomorphism, it follows that the ring~$H^*(\I_g^1;\Q)^{\Sp_{2g}(\Z)}$ is generated by the even Miller--Morita--Mumford classes, and moreover, all monomials in them are linearly independent, hence  the required equality
$$
H^*\bigl(\I_g^1;\Q\bigr)^{\Sp_{2g}(\Z)}=\Q[e_2,e_4,e_6,\ldots].
$$

We now consider the case of~$\I_{g,1}$. With some abuse of notation, we denote by~$e_k$ the Miller--Morita--Mumford classes in the cohomology of each of the three groups~$\I_g^1$, $\I_{g,1}$, and~$\I_g$.

We have a fibration sequence
\begin{equation*}
 B\I_g^1\stackrel{f}{\longrightarrow} B\I_{g,1}\stackrel{h}{\longrightarrow}B\SO(2).
\end{equation*}
The Leray--Hirsch property for this fibration sequence in the stable range on maximal algebraic subrepresetations was established in~\cite[Lemma~7.1]{KuRW20} under the condition that the rational cohomology groups are finite-dimensional in this stable rank. Since we now know from Theorems~\ref{thm_main} and~\ref{thm_alg_rep} that the rational cohomology are finite-dimensional and algebraic, we obtain the Leray--Hirsch property in the stable range in full generality. Consequently, we have an isomorphism of $H^*(B\SO(2);\Q)$-modules
 $$
 H^*(\I_{g,1};\Q)\cong H^*\bigl(\I_g^1;\Q\bigr)\otimes H^*(B\SO(2);\Q).
 $$
 Moreover, this isomorphism sends the class~$\epsilon\in H^2(\I_{g,1};\Q)$ (defined in Remark~\ref{rem_eps}) to $1\otimes u$, where $u\in H^2(B\SO(2);\Q)$ is the standard generator. This holds because $\epsilon$ restricts trivially to~$\I_g^1$ and $h$ is the classifying map for the bundle~$\eta$ from Remark~\ref{rem_eps}. It follows that $\epsilon$ is a regular element of~$H^*(\I_{g,1};\Q)$, i.\,e. not a zero-divisor in the stable range, and
 $$
 f^*\colon H^*(\I_{g,1};\Q)\twoheadrightarrow H^*\bigl(\I_g^1;\Q\bigr)
 $$
 is the quotient by the ideal~$(\epsilon)$.
Since the classes $\epsilon, e_2,e_4,\ldots$ in $H^*(\I_{g,1};\Q)$ are $\Sp_{2g}(\Z)$-invariant, we obtain the required equality
$$
H^*(\I_{g,1};\Q)=\Q[\epsilon,e_2,e_4,e_6,\ldots].
$$

 Finally, consider the case of~$\I_g$.
 We have a fibration sequence
 $$
 \Sigma_g\longrightarrow B\I_{g,1}\stackrel{\pi}{\longrightarrow} B\I_g.
 $$
 The pullback $$\pi^*\colon H^*(\I_g;\Q)\to H^*(\I_{g,1};\Q)$$ and the Gysin homomorphism $$\pi_!\colon H^*(\I_{g,1};\Q)\to H^{*-2}(\I_g;\Q)$$ are $\Sp_{2g}(\Z)$-equivariant. Moreover, $\pi_!(\epsilon)=\chi(\Sigma_g)=2-2g$ and hence $$\pi_!\bigl(\epsilon(\pi^*y)\bigr)=(2-2g)y$$ for all $y\in H^*(\I_g;\Q)$. Since we are working in the stable range $*\le \frac{2g-2}{3}$, we may assume that $g>1$. Then $\pi^*$ is injective and $\pi_!$ is surjective. Since we know that the cohomology groups considered are finite-dimensional algebraic representations of $\Sp_{2g}(\Z)$ and hence are semisimple, we see that
\begin{align*}
 H^*(\I_g;\Q)^{\Sp_{2g}(\Z)}&=\pi_!\left(H^*(\I_{g,1};\Q)^{\Sp_{2g}(\Z)}\right)\\
 {}&=\pi_!\left(\Q[\epsilon,e_2,e_4,e_6,\ldots]\right)\\
 {}&=\pi_!\left(\Q[e_2,e_4,e_6,\ldots]\right)\qquad\quad (\text{since }\pi_!(\epsilon)=2-2g)\\&=\Q[e_2,e_4,e_6,\ldots].
 \end{align*}
Moreover, the classes~$e_{2k}$ are algebraically independent in the stable range, since $\pi^*$ is a monomorphism.

\end{document}